\numberwithin{equation}{section}
\pgfplotsset{width=10cm,compat=1.9}
\def\cF{{\mathcal F}}
\def\cH{{\mathcal H}}
\def\cK{{\mathcal K}}
\def\cL{{\mathcal L}}
\def\cP{{\mathcal P}}
\def\cV{{\mathcal V}}
\def\C{\mathbb{C}}
\def\E{\mathbb{E}}
\def\N{\mathbb{N}}
\def\P{\mathbb{P}}
\def\R{\mathbb{R}}
\def\T{\mathbb{T}}
\def\d{\mathrm{d}}
\def\Law{{\textstyle Law}}
\providecommand{\abs}[1]{\lvert#1\rvert}
\providecommand{\norm}[1]{\lVert#1\rVert}
\theoremstyle{plain}
\newtheorem{theorem}{Theorem}[section]
\newtheorem{lemma}[theorem]{Lemma}
\newtheorem{corollary}[theorem]{Corollary}
\newtheorem{proposition}[theorem]{Proposition}
\newtheorem{assumption}[theorem]{Assumption}
\newtheorem{remark}[theorem]{Remark}
\date{\vspace{-1em}\normalsize{\today}}
\title{On the stability of the invariant probability measures of McKean-Vlasov equations}
\author{Quentin Cormier\footnote{
	Inria, CMAP, CNRS, \'Ecole polytechnique, Institut Polytechnique de Paris, 91120 Palaiseau, France.}}
 \date{\today}
\begin{document}
\maketitle
\vspace{5pt}

\abstract{
We study the long-time behavior of some McKean-Vlasov stochastic differential
equations used to model the evolution of large populations of interacting agents. We give conditions ensuring the local stability of an invariant probability measure. Lions derivatives are used in a novel way to obtain our stability criteria. We obtain
 results for non-local McKean-Vlasov equations on $\R^d$ and for McKean-Vlasov equations on the torus where the interaction kernel is given by a convolution. On $\R^d$, we prove that the location of the roots of an analytic function determines the stability. On the torus, our stability criterion involves the Fourier coefficients
of the interaction kernel. In both cases, we prove the convergence in the Wasserstein metric $W_1$ with an exponential rate of convergence.

}
\vspace{10pt}
\noindent\textbf{Keywords} McKean-Vlasov SDE, Long-time behavior, Mean-field interaction, Lions derivative
\smallskip\newline
\noindent\textbf{Mathematics Subject Classification}  Primary: 60H10. Secondary: 60K35, 45D05, 37A30.
\vspace{10pt}

\section{Introduction}
We are interested in the long-time behavior of the solutions of a class of McKean-Vlasov stochastic differential equations (SDE) of the form:
\begin{align} 
	\label{eq:McKeanVlasov}
	\d X^\nu_t &= \mathcal{V}(X^\nu_t, \mu_t) \d t + \sigma \d B_t,  \\
	\mu_t &= \Law(X^\nu_t),  \quad \mu_0 = \nu. \nonumber
\end{align}
In this equation, ${(B_t)}_{t \geq 0}$ is a standard $\mathbb{R}^d$-valued Brownian motion, $\sigma$ is a deterministic matrix, and $\nu$ is the law 
of the initial condition $X^\nu_0$, assumed to be independent of ${(B_t)}_{t \geq 0}$. 
McKean-Vlasov equations appear naturally  as the limit $N \rightarrow 
\infty$ of the following particle system ${(X^{i, N}_t)}_{t \geq 0}$, solution of
\begin{equation}
	\label{eq:particle system} 
\d X^{i, N}_t = \mathcal{V}(X^{i, N}_t, \mu^N_t) \d t + \sigma \d B^{i, N}_t, \qquad 1 \leq i \leq N,
\end{equation}
where $\mu^N_t$ is the empirical measure  $\mu^N_t = \frac{1}{N} \sum_{j  =1}^N { \delta_{X^{j, N}_t}}$ and 
${(B^{i, N}_t)}_{t \geq 0}$ are $N$ independent standard Brownian motions. 
We refer to~\cite{MR1108185} for an introduction to this topic.

Such particle systems and their mean-field counterparts are used in a wide range of applications such as plasma 
physics~\cite{MR541637, MR2278413}, fluid mechanics~\cite{MR3254330}, 
astrophysics (particles are stars or galaxies~\cite{MR1668556}), bio-sciences (to understand 
the collective behavior of animals~\cite{MR2860672}), neuroscience (to model assemblies of neurons, such as 
integrate and fire neurons~\cite{MR3349003, MR3573298} or FitzHugh–Nagumo neurons~\cite{MR4254489}), 
opinion 
dynamics~\cite{MR3305654} and economics~\cite{MR3752669}.

In these applications, one important question concerns the long-time behavior of the solutions. As such, 
the ergodic properties of McKean-Vlasov equations~\eqref{eq:McKeanVlasov} have been studied in many 
different 
contexts and approaches.

Two families of assumptions are known to ensure that~\eqref{eq:McKeanVlasov} admits a unique, globally attractive invariant probability measure. 
The first type of assumption deals with kernels given by $\mathcal{V}(x, \mu) = - \nabla V(x)  - \nabla W * 
\mu(x)$, 
where $V, W$ have suitable convexity properties. The first results in this direction were 
obtained in~\cite{MR1632193, MR1632197} in 
dimension one. In larger dimensions,~\cite{MR1970276, MR2208726}  proved the convergence uniformly in time of 
a suitable particle system towards the mean-field equation.  As such, they obtained the ergodicity of the 
McKean-Vlasov 
equation from the ergodicity of the particle system. These uniformly in time propagation of chaos arguments 
have been used wisely; see for instance~\cite{MR4020054, MR4333408} for recent results in this direction.
These results have also been obtained by using functional inequalities~\cite{MR1637274, MR2053570}: the idea is 
to 
define 
a measure-valued 
functional (known as the entropy or free energy), which only decreases along the trajectories of the 
solution 
of~\eqref{eq:McKeanVlasov}.

The second kind of assumption involves weak enough interactions. When the dependence of $\mathcal{V}$ with 
respect to the measure is sufficiently weak, one expects global stability because this situation can be seen as a 
perturbation of the case without interactions. As such, it is possible to extend techniques from ergodic Markov 
processes to the case of weak interactions. This includes, for instance, coupling techniques~\cite{ganzb2008, MR3403022, RevModPhys.77.137, Eberlecontraction, MR3939573} or Picard iterations in suitable spaces~\cite{MR4080722}.

It is also well-known that, in general, such global stability results cannot hold because~\eqref{eq:McKeanVlasov} may have multiple invariant probability measures and periodic solutions~\cite{MR781411, 
MR2639745, zbMATH06330199}.
These examples motivate the current question of the paper, namely the study of the local stability of a given invariant 
probability 
measure of~\eqref{eq:McKeanVlasov}. That is, being given $\nu_\infty$ an invariant probability measure of~\eqref{eq:McKeanVlasov}, we address the following question:

\textit{Is there exist an open neighborhood of $\nu_\infty$ such that for all initial 
conditions $\nu$ within this neighborhood, the law of $X^\nu_t$ converges to $\nu_\infty$, as $t$ goes to infinity?
If so, for which metric does the convergence hold, and what is the rate of convergence?}

Such local stability results can be obtained via partial differential equation (PDE) techniques, using that the 
marginals of the non-linear process solve a non-linear PDE (the Fokker-Planck equation). The strategy is to linearize 
the non-linear PDE around $\nu_\infty$, to study the existence of a spectral gap for the linear equation in 
appropriate Banach 
spaces, and to use perturbation techniques to obtain the convergence for the non-linear PDE\@.
We refer to~\cite{MR610244, MR2310258} for an overview of these techniques. 
When the non-linear PDE admits a gradient flow structure, it is also possible to study the local stability of an 
invariant probability measure using functional inequalities; see~\cite{MR0743525, MR3098681, MR4062483}. In~\cite{MR0743525}, the author study the local stability of an invariant probability measure in weighted $L^2$ norm. The result is obtained assuming a spectral condition related to the positivity of the Hessian of the free energy functional, evaluated at the invariant distribution.

Our approach differs from these two methods on several points. We do not rely on the non-linear Fokker-Planck 
PDE nor need a gradient flow structure. Instead, we use directly the stochastic representation~\eqref{eq:McKeanVlasov}.
Our strategy is to differentiate the interaction kernel with respect to the initial probability measure, in the 
neighborhood of $\nu_\infty$.
 There are several notions of derivation with respect to probability measures (see~\cite{MR3752669}): we use here the Lions derivatives. 
We denote by $\mathcal{P}_2(\mathbb{R}^d)$ the set of probability measures on $\mathbb{R}^d$ having a second 
moment.
For all $x \in \mathbb{R}^d$ and $t \geq 0$, 
we consider the function
\[ \mathcal{P}_2(\mathbb{R}^d) \ni \nu \mapsto \mathcal{V}(x, \Law(X^\nu_t)) =: v^x_t(\nu) \in \mathbb{R}^d 
,  \]
where $X^\nu_t$ is the solution of~\eqref{eq:McKeanVlasov} starting with law $\nu$ at time $0$. We prove that under 
suitable assumptions, this function is Lions differentiable at $\nu_\infty$, 
meaning that for all $\nu \in \mathcal{P}_2(\mathbb{R}^d)$, we have
\[ \mathcal{V}(x, \Law(X^\nu_t)) = \mathcal{V}(x, \nu_\infty) + \E \partial_\nu v^x_t(\nu_\infty)(Z_0) \cdot (Z-Z_0) + 
o( 
{(\E |Z-Z_0|^2)}^{1/2}).  \]
In this equation, $Z, Z_0$ are any random variables defined on 
the same 
probability space, with laws equal to $\nu$ and $\nu_\infty$. We write $\E (Z-Z_0 | Z_0) = h(Z_0)$, 
where $h$ is a deterministic function from 
$\mathbb{R}^d$ to $\mathbb{R}^d$.
As such, the function $h$ encodes the correlations between the initial conditions $Z$ and $Z_0$.
It follows from the Cauchy–Schwarz 
inequality that $\E |h(Z_0)|^2 \leq \E |Z-Z_0|^2 < \infty$.  Therefore, $h \in L^2(\nu_\infty)$. We define the 
linear operator $ \Omega_t: L^2(\nu_\infty) \rightarrow L^2(\nu_\infty)$ by
\[  \Omega_t(h) :=  x \mapsto \E \partial_\nu  v^x_t(\nu_\infty)(Z_0) \cdot h(Z_0). \]
The fact that $\Omega_t(h) \in L^2(\nu_\infty)$ for all $h \in L^2(\nu_\infty)$ is not granted apriori and will follow from our assumptions on the 
function $\mathcal{V}$. So we have (recall that $\nu = \Law(Z)$ and $\E (Z-Z_0 
| Z_0) = h(Z_0)$)
\[ \mathcal{V}(x, \Law(X^\nu_t)) = \mathcal{V}(x, \nu_\infty) + \Omega_t(h)(x)  + 
o( 
{(\E |Z-Z_0|^2)}^{1/2}).  \]
Our spectral conditions under which we prove that $\nu_\infty$ is locally stable can be stated in terms of the decay 
of 
the function $t \mapsto \Omega_t$, as $t$ goes to infinity. We show that the integrability 
of this function on $\mathbb{R}_+$ implies the stability of $\nu_\infty$. In addition, the decay of $t 
\mapsto \Omega_t$ as $t$ goes to infinity 
gives precisely the rate of convergence of $\Law(X^\nu_t)$ towards $\nu_\infty$, in Wasserstein metrics.
Crucial to our analysis, we provide an explicit integral equation to compute this function $\Omega_t$. To do so, we 
consider the linear process ${(Y^\nu_t)}_{t \geq 0}$ associated with~\eqref{eq:McKeanVlasov} and $\nu_\infty$, defined as the 
solution of 
\[ \d Y^\nu_t =  \mathcal{V}(Y^\nu_t, \nu_\infty)\d t + \sigma \d B_t, \]
starting from $\Law(Y^\nu_0) = \nu$. We define similarly for $x \in \mathbb{R}^d$ and $t \geq 0$ the 
function 
\[ \mathcal{P}_2(\mathbb{R}^d) \ni \nu \mapsto u^x_t(\nu) := \mathcal{V}(x, \Law(Y^\nu_t)). \]
Under non-restrictive assumptions, $u^x_t$ is Lions differentiable at $\nu_\infty$, and we can define
\[ \forall h \in L^2(\nu_\infty), \quad \Theta_t(h) := x \mapsto \E \partial_\nu u^x_t(\nu_\infty)(Z_0) \cdot h(Z_0). \]
We prove the following key relation between $\Theta_t$ and $\Omega_t$
\[ \forall t \geq 0, \quad  \Omega_t(h) = \Theta_t(h) + \int_0^t{ \Theta_{t-s}(\Omega_s(h)) \d s}.   \]
That is, $\Omega$ is a solution of a Volterra integral equation whose kernel is given by $\Theta$: in the language 
of integral equations, $\Omega$ is the resolvent of $\Theta$. 
This relation is helpful because it is easier to get estimates on $u^x_t$, which involves a linear 
Markov  
process, rather than getting estimates on $v^x_t$, which involves the solution of the McKean-Vlasov equation~\eqref{eq:McKeanVlasov}. 
In particular, this relation allows to deduce the decay properties of $\Omega$ from properties of $\Theta$, using Laplace transform.
We obtain our stability results for the Wasserstein $W_1$ metric.

The contributions of this work are the following.
First, in Section~\ref{sec:part1}, we consider dynamics of the form $\cV(x, \mu) = b(x) + \int_{\R^d} F(x, y) \mu (\d y)$, for some smooth functions $b: \R^d \rightarrow \R^d$ and $F: \R^d \times \R^d \rightarrow \R^d$. The function $b$ is assumed to be confining.
Our main result, Theorem~\ref{th:main result}, states that the stability of an invariant probability measure is determined by the location of the roots of an explicit analytic function associated with the dynamics. Stability holds when all the roots lie on the left half-plane, and we prove convergence in Wassertein metric $W_1$ with an exponential rate.
Our result shows that the stability is completely determined by a discrete set, this set being given by the zeros of an analytic function associated to the underlying Markov process.
Note that we do not require any structural assumption on $b$ and $F$: in particular, we do not require any convexity assumption on the coefficients. 
Our stability criterion is analogous to the Jacobian stability criterion for ODE, for which the location of the zeros of the characteristic polynomial determines the stability. 

Second, in Section~\ref{sec:part2}, we consider a McKean-Vlasov equation on the torus $\T^d := {(\R / (2 \pi \mathbb{Z}))}^d$, with an interaction kernel given by a convolution: $\cV(x, \mu)=-\int_{\T^d}\nabla W(x-y)\mu(\d y)$, where $W$ is a smooth function from $\T^d$ to $\R$. We assume that $\sigma = \sqrt{2 \beta^{-1}} I_d$ for some $\beta > 0$, where $I_d$ is the identity matrix. This setting covers many interesting models; see~\cite{MR4062483}. We study the stability of the uniform probability measure $U(\d x) := \frac{\d x}{{(2 \pi)}^d}$. Our second main result, Theorem~\ref{th:second main result}, states that when $\inf_{n \in \mathbb{Z}^d \setminus \{0\}} |n|^2(\beta + \tilde{W}(n)) > 0$, $\tilde{W}(n)$ being the $n$-th Fourier coefficient of $W$, then $U$ is locally stable for the $W_1$ metric.
Our result complements the results of~\cite{MR4062483}, for which static bifurcations are studied: in particular, we exhibit the same critical parameter.
In both parts, we use the strategy described above, using Lions derivatives and probabilistic tools.
The criteria we obtain are optimal: violations of the criteria occur strictly at bifurcation points.

The strategy presented in this work also applies to mean-field models of noisy 
integrate-and-fire neurons. In an unpublished preliminary version of this work~\cite{cormier2021meanfield}, we 
study the stability of the stationary solutions of such a mean-field model of noisy neurons. In addition, periodic solutions via Hopf bifurcations are studied in~\cite{MR4316639}. For the sake of clarity, 
we restrict here ourselves to a diffusive setting.

Finally, we mention an important open problem concerning the long-time behavior of the particle
system~\eqref{eq:particle system}.  On the one hand, general conditions are known to ensure that the particle system is 
ergodic. On the other hand, numerical studies show that this particle system can have metastable 
behavior in the sense that the convergence of the empirical 
measure $\mu^N_t$ towards its invariant state can be very slow  when $N$ is large.
The locally stable invariant probability measures of the non-linear equation~\eqref{eq:McKeanVlasov} are 
good 
candidates to be metastable states of the particle system~\eqref{eq:particle system}. Characterizing those 
metastable states in quantitative terms is a challenging mathematical question. Recent partial results have been 
obtained in this direction~\cite{zbMATH06380861, MR4187123, lcherbach2020metastability, zbMATH07493825, delarue2021uniform}, and we hope to progress on this question in future works.

\textbf{Acknowledgments}. The author would like to express his gratitude to Etienne Tanré for many 
suggestions at different steps of this work and hearty encouragement. He also thanks Romain Veltz and René 
Carmona for their valuable advice.
This research has received funding from the 
European Union’s Horizon 2020 Framework Programme for Research and Innovation under the Specific Grant 
Agreement No. 945539 (Human Brain Project
SGA3) and was supported by AFOSR FA9550-19-1-0291.

\section{McKean-Vlasov equations on \texorpdfstring{$\mathbb{R}^d$}{Rd}}\label{sec:part1}
\subsection{Main result}
Let $\cP_1(\R^d)$ be the space of probability measures on $\R^d$ with a finite first moment.
We consider the following McKean-Vlasov equation on $\R^d$:
\begin{equation}
\label{eq:McKeanVlasov1}
\d X^\nu_t = b(X^\nu_t) \d t + \int_{\R^d}{F(X^\nu_t, y) \mu_t(\d y)}\d t + \sigma \d B_t \quad \text{ with } \quad \mu_t = \Law(X^\nu_t). 
\end{equation}
The initial condition $X^\nu_0$ has law $\nu \in \mathcal{P}_1(\R^d)$. 
Here, ${(B_t)}_{t \geq 0}$ is a $d$-dimensional standard Brownian motion, $\sigma \in M_d(\R)$ is a constant $d \times d$ matrix with $\det \sigma > 0$. 
\begin{assumption}\label{ass:standing assumptions1}
	The functions $b: \R^d \rightarrow \R^d$ and  $F: \R^{2d} \rightarrow \R^d$ are $C^2$, $b$ is globally Lipschitz, and the derivatives of $F$ are bounded ($b$ and $F$ are not assumed to be bounded themselves):
	\[ \forall i,j \in {\{i,\dots,d\}}^2, \quad \norm{ \partial_{x_i} F}_\infty + \norm{ \partial^2_{x_i,x_j} F}_\infty < \infty.\] 
\end{assumption}
This ensures in particular that~\eqref{eq:McKeanVlasov1} has a unique path-wise solution.
Let $\nu_\infty \in \cP_1(\R^d)$ be an invariant probability measure of~\eqref{eq:McKeanVlasov1}, that is:
\[ \forall t \geq 0, \quad \Law(X^{\nu_\infty}_t) = \nu_\infty. \]
Denote by
$\alpha(x)$ the interaction term under $\nu_\infty$:
\begin{equation}\label{eq:definition of alpha}
	\forall x \in \R^d, \quad \alpha(x) := \int_{\R^d} F(x, y) \nu_\infty(\d y).
\end{equation}
Each invariant probability measure of~\eqref{eq:McKeanVlasov1} is characterized by its associated
function $\alpha$, and we sometimes denote by $\nu^\alpha_\infty$ such invariant probability measure to emphasize
the dependence on $\alpha$. We assume:
\begin{assumption}\label{ass:standing assumption2}
	There exists $\beta > 0$ and $R \geq 0$ such that
	\[  \forall x,x' \in \R^d, \quad |x-x'| \geq R \implies (x-x') \cdot \left[(b + \alpha)(x) - (b + \alpha)(x') \right]\leq -\beta |x-x'|^2. \]
\end{assumption}
\begin{remark}
	In particular, this is satisfied provided that $F$ is bounded (or independent of $x$) and there exists $\beta > 0$ and $R \geq 0$ such that:
	\[ |x-x'| \geq R \implies (x-x') \cdot (b(x) - b(x')) \leq -\beta |x-x'|^2. \]
	\end{remark}
	Let ${(Y^\alpha_t)}_{t \geq 0}$ the solution of the linear SDE
\begin{equation}
        \label{eq:def of Y alpha}
	\d Y^\alpha_t =  b(Y^{\alpha}_t) \d t + \alpha(Y^\alpha_t) \d t + \sigma \d B_t.
\end{equation}
Note that $\nu_\infty = \nu^\alpha_\infty$ is also an invariant probability measure of this linear SDE\@. In addition, a result of Eberle~\cite{Eberlecontraction} (see~\eqref{eq:W1 bound eberle} below) ensures that $\nu_\infty$ is the unique invariant probability measure of~\eqref{eq:def of Y alpha}. 

Consider $\cH := L^2(\nu_\infty)$ the Hilbert space of measurable functions $h: \R^d \rightarrow \R^d$ satisfying:
\[  \norm{h}^2_{\cH} := \int |h(y)|^2 \nu_\infty(\d y) < \infty. \]
We denote by $\cL(\cH)$ the space of bounded linear operators from $\cH$ to itself.
Key to our analysis is the following family of bounded linear operators $\Theta_t \in \cL(\cH)$, $t \geq 0$:
\begin{align} 
	\forall h \in \cH,\quad  &\Theta_t(h)(x) := \int_{\mathbb{R}^d}{\nabla_y \E_y F(x, Y^{\alpha}_t) \cdot h(y) \nu^\alpha_\infty(\d y). } 
	\label{eq: definition of Theta t i j}
\end{align}
The notation $\E_y F(x, Y^{\alpha}_t)$  means that the initial condition of $(Y^\alpha_t)
$ is set to be $y \in
\mathbb{R}^d$ (that is $Y^\alpha_0 = y$). In addition, $\nabla_y \E_y F(x, Y^{\alpha}_t)$ is the Jacobian matrix of $y \mapsto \E_y F(x, Y^{\alpha}_t)$.
The result of Eberle~\cite{Eberlecontraction} (see~\eqref{eq:gradient bound} below) implies that there exists $\kappa_* > 0$ and $C > 0$ such that
\begin{equation}\label{eq:control infty norm theta}
	\forall h \in \cH, \quad \norm{\Theta_t(h)}_\infty \leq C e^{-\kappa_* t} \norm{h}_{\cH}. 
\end{equation}
Denote by $I \in \cL(\cH)$ the identity operator.
Let $D = \{z \in \C,~\Re(z) > -\kappa_* \}$. For all $z \in D$, consider the Laplace transform of $\Theta_t$:
\[ \hat{\Theta}(z)= \int_0^\infty{e^{-zt} \Theta_t \d t }. \]
We recall some properties of this operator-valued function.
\begin{proposition}
We have:
\begin{enumerate}[label=~(\alph*)]
	\item For all $z \in D$, $\hat{\Theta}(z) \in \cL(\cH)$ is an Hilbert-Schmidt operator. In particular, $\hat{\Theta}(z)$ is a compact operator. 
	\item $D \ni z \mapsto \hat{\Theta}(z)$ is an analytic operator-valued function, and ${(I - \hat{\Theta}(z))}^{-1}$ exists for all $z \in D \setminus S$, where $S$ is a discrete subset of $D$. In addition, ${(I-\hat{\Theta}(z))}^{-1}$ is meromorphic in $D$, holomorphic in $D \setminus S$. 
	\item The function $D \ni z  \mapsto \det(I - \hat{\Theta}(z)) \in \C$ is analytic, where $\det$ is the regularized Fredholm determinant. In addition, $I-\hat{\Theta}(z)$ is invertible if and only if $\det(I - \hat{\Theta}(z)) \neq 0$.     
\end{enumerate}
\end{proposition}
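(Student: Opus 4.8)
The plan is to prove the three items in order, since each relies on the previous one. For item (a), I would first establish that for each fixed $z \in D$, the operator $\hat{\Theta}(z)$ is given by an integral kernel. Indeed, unwinding the definition, $\hat{\Theta}(z)(h)(x) = \int_{\R^d} \bigl(\int_0^\infty e^{-zt} \nabla_y \E_y F(x,Y^\alpha_t)\,\d t\bigr) \cdot h(y)\, \nu_\infty(\d y)$, so that as an operator on $\cH = L^2(\nu_\infty)$ its kernel is $K_z(x,y) := \int_0^\infty e^{-zt}\nabla_y \E_y F(x,Y^\alpha_t)\,\d t$ (a $d\times d$ matrix-valued function). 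To conclude that $\hat\Theta(z)$ is Hilbert--Schmidt one must check that $\int\int |K_z(x,y)|^2\,\nu_\infty(\d x)\nu_\infty(\d y) < \infty$. The bound~\eqref{eq:gradient bound} referenced in the text gives an $L^2(\nu_\infty)$-to-$L^\infty$ exponential decay estimate of the form $\norm{y \mapsto \nabla_y \E_y F(x, Y^\alpha_t)}$ controlled by $C e^{-\kappa_* t}$ uniformly in $x$ (this is precisely what yields~\eqref{eq:control infty norm theta}); integrating $e^{-\Re(z)t}e^{-\kappa_* t}$ over $t \in \R_+$ is finite because $\Re(z) > -\kappa_*$, and then integrating the resulting bound against the probability measures $\nu_\infty \otimes \nu_\infty$ keeps things finite. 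Hilbert--Schmidt operators are compact, giving the second sentence of (a).

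For item (b), analyticity of $z \mapsto \hat\Theta(z)$ in $\cL(\cH)$ follows from differentiating under the integral sign: the difference quotient $\frac{\hat\Theta(z+\zeta) - \hat\Theta(z)}{\zeta}$ converges in operator norm to $-\int_0^\infty t e^{-zt}\Theta_t\,\d t$, with the interchange justified by the exponential bound~\eqref{eq:control infty norm theta} and dominated convergence (the extra factor of $t$ is harmless). Now I invoke the analytic Fredholm theorem: for an analytic family of compact operators on a connected open set $D$, either $I - \hat\Theta(z)$ is invertible nowhere, or it is invertible on $D \setminus S$ for a discrete set $S$ with no accumulation point in $D$, and $(I-\hat\Theta(z))^{-1}$ is meromorphic on $D$. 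To rule out the first alternative one uses that for $\Re(z)$ large, $\norm{\hat\Theta(z)}_{\cL(\cH)} \leq \int_0^\infty e^{-\Re(z)t} C e^{-\kappa_* t}\,\d t \to 0$, so $I - \hat\Theta(z)$ is invertible by Neumann series there; hence $S \neq D$ and the discrete alternative holds. This is item (b).

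For item (c), the Fredholm determinant $\det(I - \hat\Theta(z))$ is well-defined because $\hat\Theta(z)$ is Hilbert--Schmidt (strictly, the Fredholm determinant $\det_1$ is defined for trace-class operators; for Hilbert--Schmidt one uses the regularized determinant $\det_2$, or one observes that the product $\hat\Theta(z')\hat\Theta(z'')$ appearing after one iteration of the Volterra structure is trace-class — I would follow whichever convention the paper sets up, most likely the latter, arguing $\hat\Theta(z)$ is in fact trace-class from a sharper kernel regularity estimate). Given a suitable determinant, analyticity of $z \mapsto \det(I - \hat\Theta(z))$ is a standard consequence of the analyticity of $z \mapsto \hat\Theta(z)$ together with the local Lipschitz (indeed analytic) dependence of the Fredholm determinant on its argument in the relevant operator ideal. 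Finally, the equivalence ``$I - \hat\Theta(z)$ invertible $\iff$ $\det(I - \hat\Theta(z)) \neq 0$'' is the classical Fredholm alternative: for compact $\hat\Theta(z)$, non-invertibility of $I - \hat\Theta(z)$ means $1$ is an eigenvalue of $\hat\Theta(z)$, which is exactly where the Fredholm determinant vanishes.

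The main obstacle I expect is the bookkeeping around the determinant in item (c): a Hilbert--Schmidt operator is not in general trace-class, so the ordinary Fredholm determinant $\det(I - \cdot)$ need not be defined on the nose, and one must either upgrade the regularity of the kernel $K_z(x,y)$ to get trace-class (which would require a pointwise rather than $L^2$ bound on $\nabla_y\E_y F(x, Y^\alpha_t)$, plausibly available from Eberle's gradient estimates), or work with the regularized $2$-determinant and be careful that its zeros still detect non-invertibility (they do, since $\det_2(I-A) = 0 \iff 1 \in \mathrm{spec}(A)$ for $A$ Hilbert--Schmidt). Everything else — differentiating under integrals, Neumann series at large $\Re(z)$, the analytic Fredholm theorem — is routine given the exponential decay~\eqref{eq:control infty norm theta}.
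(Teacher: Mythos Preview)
Your proposal is correct and follows essentially the same route as the paper: kernel representation plus the gradient bound~\eqref{eq:gradient bound} for (a), analytic Fredholm theorem with Neumann-series invertibility at large $\Re(z)$ for (b), and an appeal to standard Fredholm determinant theory for (c). One small simplification the paper makes that you anticipate in your ``obstacles'' paragraph: the gradient bound~\eqref{eq:gradient bound} is pointwise in $y$, so $\sup_{x,y}|K_z(x,y)| < \infty$ directly, and since $\nu_\infty$ is a probability measure this immediately gives $K_z \in L^2(\nu_\infty\otimes\nu_\infty)$; for (c) the paper simply cites Simon's survey on infinite determinants rather than resolving the trace-class versus Hilbert--Schmidt bookkeeping you (rightly) flag.
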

\begin{proof}
	By Fubini, we have	\[ \hat{\Theta}(z)(h) = x \mapsto \int_{\R^d} K_z(x, y) h(y) \nu_\infty(\d y), \] 
	where $K_z(x, y) := \int_0^\infty{e^{-zt} \nabla_y \E_y F(x, Y^\alpha_t) \d t}$.
	Using~\eqref{eq:gradient bound}, it holds that $\sup_{x, y \in \R^d}\abs{K_z(x, y)} < \infty$ for all $z \in D$. Therefore, $K_z(\cdot, \cdot) \in L^2(\R^d \times \R^d, \nu_\infty \otimes \nu_\infty)$ and so Theorem VI.23 in~\cite{SimonBarryBook} applies and gives the first point.
	For $\Re(z)$ large enough, it holds that $\norm{\hat{\Theta}(z)}_{\cH} < 1$. 
	Therefore, $I - \hat{\Theta}(z)$ is invertible provided that $\Re(z)$ is large enough, the inverse is given by its Neumann series. So the second point follows from the analytic Fredholm theorem, see Theorem VI.14 in~\cite{SimonBarryBook}. Finally, the third point is a fundamental result of the theory of Fredholm determinants for Hilbert-Schmidt operators, see for instance~\cite{simonInfiniteDeterminant}.  
\end{proof}
Our main result is
\begin{theorem}\label{th:main result}
	Consider $\nu_\infty$ an invariant probability measure of~\eqref{eq:McKeanVlasov1} and let $\alpha$ be given by~\eqref{eq:definition of alpha}. Assume that Assumptions~\ref{ass:standing assumptions1} and~\ref{ass:standing assumption2} hold. Define the ``abscissa'' of the rightmost zeros of $\det(I - \hat{\Theta}(z))$:
	\begin{equation}
		\label{eq:spectral assumption}
	-\lambda' := \sup \{\Re(z)~|~ z \in D, ~ \det(I - \hat{\Theta}(z))  = 0 \}. \end{equation}
	Assume that $\lambda'> 0$. Then $\nu_\infty$ is locally stable: there exists $C, \epsilon > 0$ and $\lambda \in (0, \lambda')$ such that for all $\nu \in \cP_1(\R^d)$ with $W_1(\nu, \nu_\infty) < \epsilon$, it holds that
	\[  \forall t \geq 0, \quad  W_1(\Law(X^\nu_t), \nu_\infty)  \leq C W_1(\nu, \nu_\infty)
        e^{-\lambda t}.\]
\end{theorem}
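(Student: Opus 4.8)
The plan is to reduce the nonlinear stability problem to a Volterra integral equation in the Hilbert space $\cH = L^2(\nu_\infty)$, and then to exploit the spectral information encoded in $\det(I - \hat\Theta(z))$ via Laplace transform techniques. First I would fix a coupling of $\nu$ and $\nu_\infty$ realizing the optimal $W_1$ cost, i.e.\ random variables $Z \sim \nu$, $Z_0 \sim \nu_\infty$ with $\E|Z - Z_0| \le W_1(\nu,\nu_\infty) + \eta$ for arbitrarily small $\eta$, and set $h(y) := \E(Z - Z_0 \mid Z_0 = y)$, so $\norm{h}_\cH \le (\E|Z-Z_0|^2)^{1/2}$. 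The key quantity is the difference $\delta_t(x) := \cV(x, \Law(X^\nu_t)) - \cV(x,\nu_\infty) = \cV(x,\mu_t) - \alpha(x)$, which drives the deviation of the nonlinear flow from the linear one. Using the Lions-differentiability of $v^x_t$ at $\nu_\infty$ established earlier, one writes $\delta_t = \Omega_t(h) + o(\norm{h}_\cH)$ where the error is uniform in $t$ thanks to the exponential contraction estimates, and $\Omega$ solves the Volterra equation $\Omega_t(h) = \Theta_t(h) + \int_0^t \Theta_{t-s}(\Omega_s(h))\,\d s$.

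\textbf{Extracting the decay rate.} The next step is purely analytic: from the resolvent equation and the bound \eqref{eq:control infty norm theta}, $t \mapsto \Theta_t$ is an $\cL(\cH)$-valued function with exponential decay at rate $\kappa_*$, so $\hat\Theta(z)$ is well-defined and analytic on $D$. The resolvent $\Omega$ has Laplace transform $\hat\Omega(z) = (I - \hat\Theta(z))^{-1}\hat\Theta(z) = (I-\hat\Theta(z))^{-1} - I$, which by the Proposition is meromorphic on $D$ with poles only at the discrete set $S$ of zeros of $\det(I - \hat\Theta(z))$. Since by hypothesis $\lambda' > 0$, all these poles satisfy $\Re(z) \le -\lambda'$; hence for any $\lambda \in (0,\lambda')$, $\hat\Omega$ is holomorphic on $\{\Re(z) > -\lambda\}$ and, combined with Hilbert-Schmidt decay estimates on $\hat\Theta(z)$ as $|\Im z| \to \infty$, one obtains by a contour-shift / complex Tauberian argument (or a Paley–Wiener / Gearhart–Prüss type estimate for the Volterra resolvent) that $\norm{\Omega_t}_{\cL(\cH \to L^\infty)} \le C e^{-\lambda t}$. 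This is the heart of why the zeros of the Fredholm determinant govern stability.

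\textbf{Closing the nonlinear loop.} With the linearized decay in hand, I would run a bootstrap/continuity argument on the nonlinear equation. Let $g_t(x) := \cV(x,\mu_t) - \alpha(x)$; the solution $X^\nu_t$ satisfies an SDE with drift $b + \alpha + g_t$, a time-inhomogeneous perturbation of \eqref{eq:def of Y alpha}. By Duhamel's principle at the level of the $\cV$-map and the Lions expansion, $g_t = \Omega_t(h) + (\text{higher order in } \sup_{s\le t}\norm{g_s}_\infty \text{ and } \norm{h}_\cH)$, where the higher-order term comes from the second-order Lions/Taylor remainder and is controlled by Assumption~\ref{ass:standing assumptions1} (bounded derivatives of $F$) together with Eberle's $W_1$-contraction \eqref{eq:W1 bound eberle} for the linear dynamics. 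Define $M(t) := \sup_{s \le t} e^{\lambda s}\norm{g_s}_\infty$; the estimates yield $M(t) \le C\norm{h}_\cH + C' \norm{h}_\cH M(t) + C'' M(t)^2$, so for $\norm{h}_\cH$ small enough (i.e.\ $W_1(\nu,\nu_\infty) < \epsilon$) a standard fixed-point/continuation argument gives $M(t) \le 2C\norm{h}_\cH$ for all $t$, i.e.\ $\norm{g_t}_\infty \le C W_1(\nu,\nu_\infty)e^{-\lambda t}$. Finally, feeding this back into the $W_1$-contraction estimate for the SDE with drift perturbation $g_t$ (synchronous coupling of $X^\nu$ with $Y^{\alpha}$ started from the optimal coupling, using Eberle's metric) gives $W_1(\Law(X^\nu_t),\nu_\infty) \le C W_1(\nu,\nu_\infty) e^{-\lambda t}$ after absorbing the $\eta$ from the initial coupling.

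\textbf{Main obstacle.} The delicate point is the passage from "$\hat\Omega$ is holomorphic on a half-plane strictly larger than the poles" to "$\Omega_t$ decays pointwise-in-operator-norm exponentially": a naive inverse Laplace transform requires integrability along vertical lines, which needs quantitative control of $\norm{(I - \hat\Theta(is+\sigma))^{-1}}_{\cL(\cH)}$ as $|s|\to\infty$ uniformly for $\sigma$ in a compact real interval. This uses the compactness/Hilbert–Schmidt decay of $\hat\Theta(z)$ (so $\norm{\hat\Theta(z)} \to 0$ as $|\Im z|\to\infty$, making $I - \hat\Theta$ close to $I$ there) plus analytic Fredholm theory to rule out accumulation of $S$ near the imaginary translate. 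The second genuinely nontrivial step is verifying the uniform-in-time Lions expansion with controlled remainder, which is where the $C^2$ assumptions and the exponential ergodicity of the \emph{linear} process \eqref{eq:def of Y alpha} are essential — without uniform-in-$t$ control the Volterra/Laplace machinery cannot be launched.
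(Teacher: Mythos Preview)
Your overall architecture---Volterra resolvent, Paley--Wiener/contour argument for the decay of $\Omega_t$, then a nonlinear closing---matches the paper's. But two steps of your proposal are genuine gaps, and in each case the paper does something different that you would need to incorporate.

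\textbf{The $W_1$ versus $W_2$ mismatch.} You couple $Z\sim\nu$, $Z_0\sim\nu_\infty$ optimally for $W_1$, set $h(y)=\E(Z-Z_0\mid Z_0=y)$, and then bound $\norm{h}_\cH\le(\E|Z-Z_0|^2)^{1/2}$. But $\E|Z-Z_0|^2$ is controlled by $W_2(\nu,\nu_\infty)$, not by $W_1(\nu,\nu_\infty)$; for $\nu\in\cP_1\setminus\cP_2$ it may even be infinite. So your linearized term $\Omega_t(h)$ is only bounded in terms of a $W_2$-quantity, and the argument cannot produce a $W_1$-conclusion. The paper never introduces $h$ in the actual proof: instead it sets $\varphi^\nu_t(x):=\E F(x,Y^{\alpha,\nu}_t)-\alpha(x)$ and uses Eberle's estimate~\eqref{eq:W1 bound eberle} to get $\norm{\varphi^\nu_t}_\infty\le C e^{-\kappa_* t}W_1(\nu,\nu_\infty)$ directly. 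The Volterra equation is then written for $k^\nu_t(x):=\E F(x,X^\nu_t)-\alpha(x)$ with forcing $\varphi^\nu$, and iterated to $k^\nu_t\approx\varphi^\nu_t+\int_0^t\Omega_{t-s}(\varphi^\nu_s)\,\d s$. Everything stays in $W_1$.

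\textbf{The uniform-in-$t$ remainder and the bootstrap.} You assert that the Lions expansion $\delta_t=\Omega_t(h)+o(\norm{h}_\cH)$ holds with an error \emph{uniform in $t$}, and then run a continuous bootstrap on $M(t)=\sup_{s\le t}e^{\lambda s}\norm{g_s}_\infty$. The paper does not obtain uniform-in-$t$ control of the remainder: its Proposition~\ref{prop:control hnu phinu Omeganu} gives
\[
\Bigl|k^\nu_t(x)-\varphi^\nu_t(x)-\int_0^t\Omega_{t-s}(\varphi^\nu_s)(x)\,\d s\Bigr|\le C_T\,(W_1(\nu,\nu_\infty))^2,
\]
with a constant $C_T$ that \emph{grows with $T$}. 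The reason is that the quadratic remainder involves $\int_0^t\norm{\nabla G^x_{t,\theta}}_\infty W_1(\Law(X^\nu_\theta),\nu_\infty)\,\d\theta$, and $\nabla G^x_{t,\theta}$ contains a term $\nabla^2_y\E_y F(x,Y^\alpha_{t-\theta})\cdot k^\nu_\theta$ where only \emph{first} derivatives of $y\mapsto\E_y F$ are known to decay by~\eqref{eq:gradient bound}, not second derivatives. So your inequality $M(t)\le C\norm{h}_\cH+C'\norm{h}_\cH M(t)+C''M(t)^2$ with $t$-independent $C',C''$ is not available. The paper instead proves (Lemma~\ref{lem:final lemma})
\[
W_1(\Law(X^\nu_t),\nu_\infty)\le C_\lambda e^{-\lambda t}W_1(\nu,\nu_\infty)+C_T(W_1(\nu,\nu_\infty))^2,\qquad t\in[0,T],
\]
with $C_\lambda$ independent of $T$, then fixes $T$ large so that $C_\lambda e^{-\lambda T}\le\tfrac14$, takes $\epsilon$ small so that $C_T\epsilon\le\tfrac14$, and iterates on time-blocks of length $T$ to get geometric decay. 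That discrete block iteration is the correct replacement for your continuous bootstrap.

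Your Step~2 (the Paley--Wiener part) is essentially what the paper does in Proposition~\ref{prop:paley-wiener}, so that part is fine.
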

\subsection{Remarks and examples}
We now give explanations on Theorem~\ref{th:main result}, in particular on the spectral assumption involving~\eqref{eq:spectral assumption}.
From now on, the constants may vary from one line to the other.
\subsubsection*{Gradients bounds}
We denote by $(Y^{\alpha, \delta_x}_t)$ the solution of~\eqref{eq:def of Y alpha} with initial condition $Y^{\alpha, \delta_x}_0 = x$.  
Under assumptions~\ref{ass:standing assumption2}, Theorem 1 in~\cite{Eberlecontraction} applies: there exists $\kappa_* > 0$ and $C_* > 1$ such that for all $x, y \in \R^d$ and all $t \geq 0$,
\begin{equation}\label{eq:W1 bound eberle}
	W_1(\Law(Y^{\alpha, \delta_x}_t), \Law(Y^{\alpha, \delta_y}_t)) \leq C_* e^{-\kappa_* t} |x-y|.
\end{equation}
We deduce from this inequality the following gradient bound. For all $f \in C^1(\R^d)$:  
\begin{equation}\label{eq:gradient bound}
	\forall y \in \R^d, \quad | \nabla_y \E_y f(Y^\alpha_t) | \leq C_* \lVert \nabla f \rVert_\infty e^{-\kappa_* t}, 
\end{equation}
In particular, by choosing $f = F(x, \cdot)$, we obtain the estimate~\eqref{eq:control infty norm theta}.
Note that it is possible to get gradient bounds similar to~\eqref{eq:gradient bound} under less restrictive assumptions on $b$ and $\sigma$; see~\cite{Talay90}.
\subsubsection*{On the spectral condition}
We consider the following bounded linear operator $\Omega_t \in \cL(\cH)$ by taking the Neumann series:
\begin{equation} \forall h \in  \cH, \quad \Omega_t(h) := \sum_{i \geq 1}{ \Theta^{\otimes i}_t (h)}, 
	\label{eq:link between Omega and Theta}
\end{equation}
where the linear operators $\Theta^{\otimes (i)}_t$ are defined recursively by
\[ \forall t \geq 0, \quad  \Theta^{\otimes (i+1)}_t(h)  = \int_0^t{ \Theta_{t-s}(\Theta^{\otimes i}_s(h)) \d s}, \quad 
\text{ and } \quad \Theta^{\otimes 1}_t(h) = \Theta_t(h).  \]
The series~\eqref{eq:link between Omega and Theta} converges uniformly on any compact $[0, T]$ for $T > 0$. The operators $\Omega_t$ and $\Theta_t$ satisfy the following Volterra integral equation:
\begin{align}
	\forall h \in \cH,  \quad \Omega_t(h) &= \Theta_t(h) + \int_0^t{ \Theta_{t-s}(\Omega_s(h))\d s}  	
	\label{eq:volterra equation Theta Omega}	\\
	&=  \Theta_t(h) + \int_0^t{ \Omega_{t-s}(\Theta_s(h)) \d s}. \nonumber
\end{align}
We denote by $\norm{\Omega_t}_{\cL(\cH)}$ the operator norm of $\Omega_t$:
\[  \norm{\Omega_t}_{\cL(\cH)} := \sup_{\norm{h}_{\cH} \leq 1} \norm{ \Omega_t(h) }_{\cH}.\]
Then the spectral condition $\lambda' > 0$ is equivalent to the exponential decay of $t \mapsto  \norm{\Omega_t}_{\cL(\cH)}$:
\begin{proposition}\label{prop:paley-wiener}
The two following statements are equivalent
\begin{enumerate}[label=~(\alph*)]
	\item $\lambda' > 0$, where $\lambda'$ is given by~\eqref{eq:spectral assumption}.
	\item $\exists \lambda > 0$ such that $\sup_{t \geq 0} e^{\lambda t} \norm{\Omega_t}_{\cL(\cH)} < \infty$.
\end{enumerate}
\end{proposition}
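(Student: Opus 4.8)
The plan is to recognize this as a Paley--Wiener-type theorem for operator-valued Volterra equations: the resolvent $\Omega$ of the kernel $\Theta$ decays exponentially if and only if $I - \hat\Theta(z)$ is invertible on a half-plane strictly to the right of the axis of interest. The direction (b) $\Rightarrow$ (a) is the easy one. If $\sup_{t\ge 0} e^{\lambda t}\norm{\Omega_t}_{\cL(\cH)} < \infty$ for some $\lambda > 0$, then $\hat\Omega(z) := \int_0^\infty e^{-zt}\Omega_t\,\d t$ is a well-defined, analytic $\cL(\cH)$-valued function on $\{\Re(z) > -\lambda\}$. Taking Laplace transforms in the Volterra equation~\eqref{eq:volterra equation Theta Omega} and using that Laplace transform turns the convolution $\int_0^t \Theta_{t-s}\Omega_s\,\d s$ into the product $\hat\Theta(z)\hat\Omega(z)$ (this needs $\Re(z) > -\min(\lambda,\kappa_*)$ so both transforms converge), we get $\hat\Omega(z) = \hat\Theta(z) + \hat\Theta(z)\hat\Omega(z)$, hence $(I - \hat\Theta(z))(I + \hat\Omega(z)) = I$; symmetrically $(I + \hat\Omega(z))(I - \hat\Theta(z)) = I$. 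So $I - \hat\Theta(z)$ is invertible, i.e. $\det(I - \hat\Theta(z)) \neq 0$, on $\{\Re(z) > -\min(\lambda,\kappa_*)\}$, which forces $\lambda' \ge \min(\lambda,\kappa_*) > 0$.

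For the main direction (a) $\Rightarrow$ (b), assume $\lambda' > 0$ and fix $\lambda \in (0, \min(\lambda', \kappa_*))$. By~\eqref{eq:control infty norm theta} and the boundedness of $\nu_\infty$ (so $\norm{\cdot}_\cH \le \norm{\cdot}_\infty$ up to a constant, or rather $\norm{\Theta_t(h)}_\cH \le \norm{\Theta_t(h)}_\infty \le Ce^{-\kappa_* t}\norm{h}_\cH$), the rescaled kernel $\Theta^\lambda_t := e^{\lambda t}\Theta_t$ satisfies $\norm{\Theta^\lambda_t}_{\cL(\cH)} \le Ce^{-(\kappa_*-\lambda)t}$, so $\Theta^\lambda \in L^1(\R_+; \cL(\cH))$. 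Its Laplace transform at $z$ is $\hat\Theta(z - \lambda)$, which by assumption is invertible with $(I - \hat\Theta(z-\lambda))^{-1}$ bounded on the closed half-plane $\{\Re(z) \ge 0\}$ (invertibility holds there since $\Re(z-\lambda) > -\lambda'$, and one checks boundedness of the inverse up to and including the axis, plus decay as $|\Im z| \to \infty$ using the Hilbert--Schmidt norm bound on $\hat\Theta$ from the Proposition). The operator-valued Paley--Wiener theorem (Gripenberg--Londen--Staffans, \emph{Volterra Integral and Functional Equations}, Theorem~4.3 in Chapter~2, or the Gelfand-theory argument for the Wiener algebra of $\cL(\cH)$-valued kernels) then gives that the resolvent $\rho^\lambda$ of $\Theta^\lambda$ — defined by $\rho^\lambda_t = \Theta^\lambda_t + \int_0^t \Theta^\lambda_{t-s}\rho^\lambda_s\,\d s$ — lies in $L^1(\R_+;\cL(\cH))$. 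Unwinding the exponential rescaling, $\rho^\lambda_t = e^{\lambda t}\Omega_t$, and from $\Omega \in L^1(\R_+; e^{-\lambda t}\cL(\cH))$ together with the convolution identity one upgrades $L^1$-integrability of $e^{\lambda t}\Omega_t$ to the pointwise bound $\sup_t e^{\lambda' t}\norm{\Omega_t}_{\cL(\cH)} < \infty$ for a slightly smaller rate: indeed $e^{\lambda t}\Omega_t = e^{\lambda t}\Theta_t + \int_0^t e^{\lambda(t-s)}\Theta_{t-s}\cdot e^{\lambda s}\Omega_s\,\d s$ expresses $e^{\lambda t}\Omega_t$ as (bounded) $+$ ($L^1$ function) $*$ ($L^1$ function restricted appropriately), and since $e^{\lambda t}\Theta_t$ is itself bounded and in $L^1$, a standard bootstrap shows $e^{\lambda t}\Omega_t$ is bounded.

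The main obstacle is the careful invocation of the operator-valued Paley--Wiener / Wiener's Tauberian theorem: unlike the scalar case, one must work in the Banach algebra $\cL(\cH)$ and verify that $I - \hat\Theta(z-\lambda)$ is invertible not just on the open half-plane but uniformly on its closure, including control as $|\Im z| \to +\infty$. Here the two facts proved in the preceding Proposition are essential — that $\hat\Theta(z)$ is Hilbert--Schmidt (hence compact, so invertibility of $I - \hat\Theta$ is governed by the nonvanishing of the Fredholm determinant, a scalar analytic function) and that $z \mapsto \hat\Theta(z)$ is analytic and, from $K_z(x,y) = \int_0^\infty e^{-zt}\nabla_y\E_y F(x,Y^\alpha_t)\,\d t$ with the uniform bound from~\eqref{eq:gradient bound}, decays in Hilbert--Schmidt norm as $\Re(z)$ grows and stays bounded with oscillatory decay as $\Im(z) \to \infty$ by Riemann--Lebesgue. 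These give that $\det(I - \hat\Theta(z))$ is bounded away from $0$ on $\{\Re(z) \ge -\lambda\}$ outside a compact set, and the finitely many zeros in any strip together with $-\lambda'$ being the supremum of their real parts ensures a positive gap; combining the compact-set estimate with the gap yields the uniform lower bound needed to apply the theorem.
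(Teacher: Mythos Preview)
Your proof is correct and follows essentially the same route as the paper: Laplace-transform the Volterra identity for (b) $\Rightarrow$ (a), and for (a) $\Rightarrow$ (b) rescale by $e^{\lambda t}$, invoke the Paley--Wiener theorem from Gripenberg--Londen--Staffans (the paper cites Ch.~2, Th.~4.1 of the same reference), then bootstrap via the convolution identity to the sup bound at the same rate $\lambda$. Your closing paragraph on uniform invertibility and decay as $|\Im z| \to \infty$ is more cautious than needed --- the cited theorem requires only pointwise invertibility of $I - \hat K(z)$ on $\{\Re z \ge 0\}$, and the paper dispatches the passage from matrices to Hilbert--Schmidt operators with a one-line footnote --- but otherwise the arguments coincide.
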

\begin{proof}
	We first show that (b) implies (a): by assumption, there exists $\lambda \in (0, \kappa_*)$ such that $z \mapsto \hat{\Omega}(z)$ is an analytic operator-valued function on $\Re(z) > -\lambda$. In view of~\eqref{eq:volterra equation Theta Omega}, we have for $\Re(z) > -\lambda$, $\hat{\Omega}(z) =  \hat{\Theta}(z) + \hat{\Theta}(z) \cdot \hat{\Omega}(z) =  \hat{\Theta}(z) + \hat{\Omega}(z) \cdot \hat{\Theta}(z)$, and so 
	\[  I = (I + \hat{\Omega}(z))(I - \hat{\Theta}(z)) = (I - \hat{\Theta}(z))(I + \hat{\Omega}(z)). \]
	Therefore, $I-\hat{\Theta}(z)$ is invertible for all $\Re(z) > -\lambda$, and consequently $\lambda' \geq \lambda > 0$.

	We then show that (a) implies (b): this follows from a Paley-Wiener theorem. Let $\lambda \in (0, \lambda')$ and define $K_t:= e^{\lambda t} \Theta_t$ and $R_t:= e^{\lambda t} \Omega_t$. It holds that $K \in L^1(\R_+; \cL(\cH))$ (because $\lambda < \kappa_*$) and $I - \hat{K}(z)$ is invertible for all $\Re(z) \geq 0$ (because $\lambda < \lambda'$).  
	Therefore, by~\cite[Ch. 2, Th. 4.1]{MR1050319}\footnote{The result in~\cite{MR1050319} is stated and proved for matrix valued operators. The extension to Hilbert-Schmidt operators is straightforward by the exact same arguments. }, it holds that $R \in L^1(\R_+; \cL(\cH))$.
	Using the estimate~\eqref{eq:control infty norm theta} and~\eqref{eq:volterra equation Theta Omega}, we find that
	\begin{align*}
		\norm{\Omega_t(h)}_\infty &  \leq C e^{- \kappa_* t} \norm{h}_{\cH} + C \int_0^t{ 
	e^{-\kappa_* s } 
	\norm{\Omega_{t-s}}_{\cL(\cH)} \norm{h}_{\cH}  \d s 
	} \\
	& \leq C \norm{h}_{\cH} \left( e^{-\kappa_*t} + e^{-\lambda t} \int_0^t{ e^{\lambda(t-s)} \norm{\Omega_{t-s}}_{\cL(\cH)} \d s } \right) \\
	& \leq C \norm{h}_{\cH}\left( 1 + \int_0^\infty{  \norm{R_s}_{\cL(\cH)} \d s } \right) e^{-\lambda t}.
	\end{align*}
	This shows that $\sup_{t \geq 0} e^{\lambda t} \norm{\Omega_t}_{\cL(\cH)} < \infty$.  This ends the proof.
\end{proof}
\subsubsection*{The spectral condition is necessary}
Let $h \in \cH$ be fixed and let $Z_0$ be a random variable of law $\nu_\infty$.
For $\epsilon \in \R$, let $\nu_\epsilon := \Law(Z_0 + \epsilon h(Z_0))$.
We will see that the operator $\Omega_t$ admits the following probabilistic representation (see Remark~\ref{rk:interpretation Theta_t, Omega_t} below):
\begin{equation}
	\label{eq:representation Omega}
\forall t \geq 0, \quad  \Omega_t(h)(x) = \lim_{\epsilon \rightarrow 0} \frac{1}{\epsilon} \int_{\R^d} { F(x, y) (\Law(X^{\nu_\epsilon}_t) - \nu_\infty)(\d y)}, \quad x \in \R^d,  
\end{equation}
where $(X^{\nu_\epsilon}_t)$ is the solution of the McKean-Vlasov equation~\eqref{eq:McKeanVlasov1} starting with law $\nu_\epsilon$.
We have by the Cauchy-Schwarz inequality 
\[ W_1(\nu_\epsilon, \nu_\infty) \leq \epsilon \E \abs{h(Z_0)} \leq \epsilon \norm{h}_{\cH}.  \]
Assume that the conclusion of Theorem~\ref{th:main result} holds. Then there exists $C$, $\lambda > 0$ such that for all $\epsilon$ small enough,
\[ W_1(\Law(X^{\nu_\epsilon}_t), \nu_\infty) \leq C \epsilon e^{-\lambda t}  \norm{h}_{\cH}.  \]
We deduce that:
\[ \forall x \in \R^d, \quad \left|  \int_{\R^d} { F(x, y) (\Law(X^{\nu_\epsilon}_t) - \nu_\infty)(\d y)} \right| \leq C \epsilon \norm{\nabla_y F}_\infty e^{-\lambda t} \norm{h}_{\cH}.  \]
Therefore, 
\[ \norm{ \Omega_t(h) }_\infty \leq C  \norm{\nabla_y F}_\infty e^{-\lambda t} \norm{h}_{\cH}. \]
Using Proposition~\ref{prop:paley-wiener}, we deduce that $\lambda' > 0$, where $\lambda'$ is given by~\eqref{eq:spectral assumption}. In other words, the spectral condition $\lambda' > 0$ is necessary to have stability in $W_1$ norm.
\subsubsection*{Structural stability with convex coefficients}
The classical situation where stability is known to hold globally is the following: assume that there exist functions $V, F: \R^d \rightarrow \R$ such that $b(x) = -\nabla V(x) \quad \text{ and } \quad F(x, y) = -\nabla W(x-y)$. 
Assume that $W$ is convex and even and that $V$ is strongly convex:
\begin{equation}
	\label{eq:W,V convex}
	\exists \theta > 0, \forall x \in \R^d, \quad W(x) = W(-x), \quad \nabla^2 W(x) \geq 0, \quad \text{ and } \quad  \nabla^2 V(x) \geq \theta I_d. 
\end{equation}
Then, by~\cite[Th. 7]{zbMATH06892760}, it holds that
\[ \forall t \geq 0, \forall \nu, \mu \in \cP_2(\R^d),\quad W_2(\Law(X^\nu_t), \Law(X^\mu_t)) \leq e^{-\theta t / 2} W_2(\nu, \mu). \] 
Therefore, these structural conditions ensure that~\eqref{eq:McKeanVlasov1} has a unique invariant probability measure, which is globally stable in $W_2$ norm. 
We show that our spectral condition is satisfied under these conditions.
\begin{lemma}
	Under the structural assumption~\eqref{eq:W,V convex}, it holds that $\lambda' > 0$, $\lambda'$ given by~\eqref{eq:spectral assumption}, and so Theorem~\ref{th:main result} applies.
\end{lemma}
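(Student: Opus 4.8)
The plan is to check hypothesis~(b) of Proposition~\ref{prop:paley-wiener} --- the exponential decay of $t \mapsto \norm{\Omega_t}_{\cL(\cH)}$ --- by combining the probabilistic representation~\eqref{eq:representation Omega} of $\Omega_t$ with the $W_2$-contraction recalled just above, and then to conclude $\lambda' > 0$ from the equivalence (b)$\Leftrightarrow$(a). Note first that~\eqref{eq:W,V convex} forces the effective drift $b + \alpha = -\nabla\bigl(V + W * \nu_\infty\bigr)$ to have Jacobian $\le -\theta I_d$ at every point, so Assumption~\ref{ass:standing assumption2} holds with $R = 0$ and $\beta = \theta$, $\nu_\infty$ has a finite second moment, and the $W_2$-contraction of~\cite[Th. 7]{zbMATH06892760} is available.

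Fix $h \in \cH = L^2(\nu_\infty)$ and let $Z_0 \sim \nu_\infty$. For $\epsilon \in \R$ put $\nu_\epsilon := \Law\bigl(Z_0 + \epsilon h(Z_0)\bigr) \in \cP_2(\R^d)$; coupling $Z_0 + \epsilon h(Z_0)$ with $Z_0$ gives $W_2(\nu_\epsilon, \nu_\infty) \le |\epsilon|\,\norm{h}_{\cH}$. Applying the $W_2$-contraction to the McKean--Vlasov flows started from $\nu_\epsilon$ and from $\nu_\infty$, and using $W_1 \le W_2$,
\[ W_1\bigl(\Law(X^{\nu_\epsilon}_t), \nu_\infty\bigr) \;\le\; e^{-\theta t/2}\, W_2(\nu_\epsilon, \nu_\infty) \;\le\; |\epsilon|\, e^{-\theta t/2}\, \norm{h}_{\cH}. \]
Since $y \mapsto F(x,y)$ is $\norm{\nabla_y F}_\infty$-Lipschitz uniformly in $x$ (and $\norm{\nabla_y F}_\infty < \infty$ by Assumption~\ref{ass:standing assumptions1}), Kantorovich--Rubinstein duality yields
\[ \Bigl|\int_{\R^d} F(x,y)\bigl(\Law(X^{\nu_\epsilon}_t) - \nu_\infty\bigr)(\d y)\Bigr| \;\le\; \norm{\nabla_y F}_\infty\, W_1\bigl(\Law(X^{\nu_\epsilon}_t), \nu_\infty\bigr) \;\le\; |\epsilon|\, \norm{\nabla_y F}_\infty\, e^{-\theta t/2}\, \norm{h}_{\cH}. \]
Dividing by $\epsilon$ and letting $\epsilon \to 0$ in~\eqref{eq:representation Omega} gives $\norm{\Omega_t(h)}_\infty \le \norm{\nabla_y F}_\infty e^{-\theta t/2}\norm{h}_{\cH}$; as $\nu_\infty$ is a probability measure, $\norm{\Omega_t(h)}_{\cH} \le \norm{\Omega_t(h)}_\infty$, hence $\norm{\Omega_t}_{\cL(\cH)} \le \norm{\nabla_y F}_\infty e^{-\theta t/2}$ for all $t \ge 0$. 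Thus statement~(b) of Proposition~\ref{prop:paley-wiener} holds with $\lambda = \theta/2 > 0$, so statement~(a), i.e.\ $\lambda' > 0$, holds as well, and Theorem~\ref{th:main result} applies.

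There is no genuine analytic obstacle here: this is exactly the computation of the paragraph ``the spectral condition is necessary'' run in the favourable direction, the only external inputs being the representation~\eqref{eq:representation Omega} (established later, see Remark~\ref{rk:interpretation Theta_t, Omega_t}) and the quoted $W_2$-contraction. The only two points deserving a line of care are the passage to the limit $\epsilon \to 0$, which is harmless because the bound on $W_1(\Law(X^{\nu_\epsilon}_t), \nu_\infty)$ is linear in $\epsilon$ with a constant independent of $\epsilon$, and the fact that $\nu_\infty$ (hence each $\nu_\epsilon$) has a finite second moment, which follows from the uniform convexity of $V + W * \nu_\infty$.
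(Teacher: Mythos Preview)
Your proof is correct and follows essentially the same route as the paper's: perturb $\nu_\infty$ along $h$, use the $W_2$-contraction of~\cite[Th.~7]{zbMATH06892760} together with $W_1 \le W_2$ and the Lipschitz bound on $F(x,\cdot)$ to control the integrand in~\eqref{eq:representation Omega}, pass to the limit to get exponential decay of $\norm{\Omega_t(h)}_\infty$, and conclude via Proposition~\ref{prop:paley-wiener}. Your additional remarks (verification of Assumption~\ref{ass:standing assumption2} and of $\nu_\infty \in \cP_2$) are not in the paper's proof but are legitimate points of care.
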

\begin{proof}
	Let $\nu_\infty$ be the unique invariant probability measure of~\eqref{eq:McKeanVlasov1}. Let $h \in \cH, \epsilon \in \R$ and let $\nu_\epsilon = \cL(Z_0 + \epsilon h(Z_0))$, where $\Law(Z_0) = \nu_\infty$.
We have
\begin{align*}
	\left|\int_{\R^d} { \nabla W(x-y) (\Law(X^{\nu_\epsilon}_t) - \nu_\infty)(\d y)} \right| & \leq \norm{\nabla^2 W}_\infty W_1(\Law(X^{\nu_\epsilon}_t), \nu_\infty) \\
												   & \leq \norm{\nabla^2 W}_\infty W_2(\Law(X^{\nu_\epsilon}_t), \nu_\infty) \\
												   & \leq \norm{\nabla^2 W}_\infty e^{-\theta t /2 }W_2(\nu_\epsilon, \nu_\infty) \\
												   & \leq  C e^{-\theta t /2} \sqrt{\E \abs{\epsilon h(Z_0)}^2} = C |\epsilon| e^{-\theta t /2} \norm{h}_{\cH}.
\end{align*}
Therefore, by~\eqref{eq:representation Omega}, we have
\[ \forall h  \in \cH, \quad \norm{\Omega_t(h)}_\infty \leq C e^{-\theta t/2} \norm{h}_{\cH}. \]
So Proposition~\ref{prop:paley-wiener} applies and $\lambda' > 0$.
\end{proof}
\subsubsection*{Case of weak interactions}
One way to check that the spectral condition $\lambda' > 0$ holds, $\lambda'$ given by~\eqref{eq:spectral assumption}, is to compute the $L^1$ norm of $\Theta_t$. Recall that
$\norm{\Theta_t}_{\cL(\cH)} = \sup_{\norm{h}_{\cH} \leq 1} \norm{\Theta_t(h)}_{\cH}$.
\begin{lemma}
	Assume that $\int_0^\infty{ \norm{\Theta_t}_{\cL(\cH)} \d t} < 1$. Then $\lambda' > 0$ and so $\nu_\infty$ is
        locally stable.
\end{lemma}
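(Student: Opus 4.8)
The plan is to exhibit a $\lambda>0$ for which $\det(I-\hat\Theta(z))\neq 0$ on the whole half‑plane $\{\Re(z)>-\lambda\}$; by the definition~\eqref{eq:spectral assumption} this forces $\lambda'\geq\lambda>0$, and then Theorem~\ref{th:main result} gives the local stability of $\nu_\infty$. Set $m:=\int_0^\infty\norm{\Theta_t}_{\cL(\cH)}\,\d t$, so that $m<1$ by hypothesis.

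First I would record that $t\mapsto\norm{\Theta_t}_{\cL(\cH)}$ decays exponentially. Since $\nu_\infty$ is a probability measure, $\norm{g}_{\cH}\leq\norm{g}_\infty$ for every bounded $g\colon\R^d\to\R^d$, so~\eqref{eq:control infty norm theta} yields $\norm{\Theta_t(h)}_{\cH}\leq\norm{\Theta_t(h)}_\infty\leq C e^{-\kappa_* t}\norm{h}_{\cH}$, i.e.\ $\norm{\Theta_t}_{\cL(\cH)}\leq C e^{-\kappa_* t}$. Hence, for every $\lambda\in(0,\kappa_*)$ the quantity $m_\lambda:=\int_0^\infty e^{\lambda t}\norm{\Theta_t}_{\cL(\cH)}\,\d t$ is finite, and by dominated convergence (dominating function $Ce^{-\kappa_* t/2}$, say) the map $\lambda\mapsto m_\lambda$ is continuous on $[0,\kappa_*)$ with $m_0=m<1$. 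I then fix $\lambda\in(0,\kappa_*)$ small enough that $m_\lambda<1$.

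Next, for any $z$ with $\Re(z)>-\lambda$ the integral $\hat\Theta(z)=\int_0^\infty e^{-zt}\Theta_t\,\d t$ converges absolutely in $\cL(\cH)$ (because $-\lambda>-\kappa_*$), and
\[ \norm{\hat\Theta(z)}_{\cL(\cH)}\leq\int_0^\infty e^{-\Re(z)t}\norm{\Theta_t}_{\cL(\cH)}\,\d t\leq\int_0^\infty e^{\lambda t}\norm{\Theta_t}_{\cL(\cH)}\,\d t=m_\lambda<1, \]
using $-\Re(z)<\lambda$. Therefore $I-\hat\Theta(z)$ is invertible, its inverse being the Neumann series $\sum_{k\geq0}\hat\Theta(z)^k$; by the last item of the proposition recalled just before Theorem~\ref{th:main result} (invertibility of $I-\hat\Theta(z)$ is equivalent to non‑vanishing of the Fredholm determinant), this means $\det(I-\hat\Theta(z))\neq0$. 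Consequently $\det(I-\hat\Theta(\cdot))$ has no zero in $\{z\in D:\Re(z)>-\lambda\}$, so the supremum in~\eqref{eq:spectral assumption} is at most $-\lambda$ and thus $\lambda'\geq\lambda>0$. Theorem~\ref{th:main result} then applies and gives the claimed local stability.

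There is no real obstacle here: the only substantive ingredient is the a priori bound~\eqref{eq:control infty norm theta} coming from Eberle's contraction estimate, which is precisely what allows the contraction $\norm{\hat\Theta(z)}_{\cL(\cH)}<1$ to persist on a half‑plane strictly to the left of the imaginary axis — without it one would only get $\lambda'\geq0$, which is insufficient. An alternative, equivalent route is to observe that $m_\lambda<1$ makes the Neumann series~\eqref{eq:link between Omega and Theta} for $K_t:=e^{\lambda t}\Theta_t$ summable in $L^1(\R_+;\cL(\cH))$, deduce $\sup_{t\geq0}e^{\lambda t}\norm{\Omega_t}_{\cL(\cH)}<\infty$ exactly as in the proof of Proposition~\ref{prop:paley-wiener}, and then quote that proposition; but the direct Laplace‑transform argument above is shorter.
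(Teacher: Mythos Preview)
Your proof is correct and follows essentially the same route as the paper: both find a small $\delta>0$ with $\int_0^\infty e^{\delta t}\norm{\Theta_t}_{\cL(\cH)}\,\d t<1$, bound $\norm{\hat\Theta(z)}_{\cL(\cH)}<1$ on $\Re(z)\geq-\delta$, and invert via the Neumann series. You are slightly more explicit than the paper in justifying the existence of such a $\delta$ (via~\eqref{eq:control infty norm theta} and dominated convergence), but the argument is the same.
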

\begin{proof}
	Recall that by~\eqref{eq:control infty norm theta}, it holds that $\sup_{t \geq 0} e^{\kappa_* t} \norm{\Theta_t}_{\cL(\cH)} < \infty$. Therefore,	there exists $\delta > 0$ small enough such that
\[ \int_0^\infty{ e^{\delta t} \norm{\Theta_t}_{\cL(\cH)} \d t} < 1. \]
For $\Re(z) \geq -\delta$, it holds that $\norm{\hat{\Theta}(z)}_{\cL(\cH)} \leq \int_0^\infty{e^{-\Re(z)t } \norm{\Theta_t}_{\cL(\cH)} \d t } < 1$.  
We deduce that $I - \hat{\Theta}(z)$ is invertible for $\Re(z) \geq -\delta$, with inverse given by $\sum_{k \geq 0} {(\hat{\Theta}(z))}^k$. So $\lambda' \geq \delta > 0$.
\end{proof}
This assumption is typically satisfied if the non-linear part in~\eqref{eq:McKeanVlasov1} is weak enough.  
Given $M \in M_d(\C)$, let $\norm{M}_2$ be the spectral norm of the matrix $M$. We let
$[F] := \sup_{x,y \in \R^d} \norm{\nabla_y F(x, y)}_2$. 
Then, from~\eqref{eq:W1 bound eberle}, we have $\norm{\Theta_t}_{\cL(\cH)} \leq C_* [F] e^{-\kappa_* t}$.
Therefore, if $[F] < \kappa_* / C_*$, then $\lambda' >0$, and so any invariant probability measure of~\eqref{eq:McKeanVlasov1} is locally stable.
\subsubsection*{Case of ``separable'' interactions}
Assume that $F$ is ``separable'', in the sense that there exist functions $w_i: \R^d \rightarrow \R^d$ and $f_i: \R^d \rightarrow \R$ such that
\[ \forall x,y \in \R^d \times \R^d, \quad F(x,y) = \sum_{i = 1}^p f_i(y) w_i(x). \]
Let $\cH_0$ be the following subspace of $\cH$ of finite dimension
\[ \cH_0 := \{ h~|~h = \sum_{i = 1}^p \beta_i w_i, ~\beta \in \R^p\}. \]
For all $h \in \cH$ and for all $t \geq 0$, it holds that $\Theta_t(h) \in \cH_0$. The restriction of $\Theta_t$ to $\cH_0$ can be represented by a $p \times p$ matrix, again denoted by $\Theta_t$, and we have
\[ \Theta^{i,j}_t = \int_{\R^d} \nabla_y \E_y f_i(Y^\alpha_t) \cdot w_j(y) \nu_\infty(\d y), \qquad 1\leq i,j\leq p.   \]
In that case, the determinant in Theorem~\ref{th:main result} is the standard determinant of matrices. This instance already covers a number of interesting examples. 
\subsubsection*{Case with small noise ($\sigma \simeq 0$)}
We now discuss the case where the noise $\sigma$ is small. 
	The case $\sigma \equiv 0$ would require a special treatment and is not included in Theorem~\ref{th:main result}. It is however instructive to see that the criterion involving~\eqref{eq:spectral assumption} is equivalent to the classical stability criterion of a deterministic dynamical systems in $\mathbb{R}^d$. When $\sigma \equiv 0$, the invariant measures are of the form $\delta_{x_*}$ for some $x_* \in \mathbb{R}^d$. Let $\cV(x,y) = b(x) + F(x,y)$. Assume that
	\begin{equation} 
		\label{eq:ass-Theta0}
		\exists \kappa > 0, \quad \sup_{t \geq 0} e^{\kappa t}  \norm{ e^{t \nabla_x \cV(x_*, x_*)}}_2 < \infty.  
	\end{equation}
	This condition means that $x_*$ is a stable equilibrium point for the ODE $\dot{y} = \cV(y, x_*)$. Indeed,~\eqref{eq:ass-Theta0} is equivalent to the fact that the Jacobian matrix of the vector field $x \mapsto \cV(x, x_*)$ has all its eigenvalues in the left half-plane $\Re(z) < 0$.
	\begin{lemma}\label{lem:sigma is equal to zero}
		Assume~\eqref{eq:ass-Theta0} holds and $\sigma \equiv 0$. Then, the criterion $\lambda' > 0$, $\lambda'$ given by~\eqref{eq:spectral assumption}, is equivalent to the fact that the Jacobian matrix of the vector field $x \mapsto \cV(x, x)$ at the point $x_*$ has all its eigenvalues in the left half-plane $\Re(z)< 0$.
	\end{lemma}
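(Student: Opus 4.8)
The plan is to exploit that for $\sigma \equiv 0$ the ``linear'' process in~\eqref{eq:def of Y alpha} degenerates to a deterministic flow, so that $\cH = L^2(\delta_{x_*})$ is isometric to $\R^d$ through $h \mapsto h(x_*)$, and the family $\Theta_t$ becomes an explicit $d\times d$ matrix that can be Laplace-transformed by hand. Set $A := \nabla_x \cV(x_*,x_*) = \nabla b(x_*) + \nabla_x F(x_*,x_*)$ and $B := \nabla_y \cV(x_*,x_*) = \nabla_y F(x_*,x_*)$, so that $A$ is the linearization at $x_*$ of the flow of~\eqref{eq:def of Y alpha} and $A+B$ is the Jacobian at $x_*$ of $x \mapsto \cV(x,x) = b(x) + F(x,x)$. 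I expect to show that, under the identification $\cH \cong \R^d$, one has $\Theta_t = B\, e^{tA}$ and hence $\det(I - \hat{\Theta}(z)) = \det(zI - (A+B)) / \det(zI - A)$ on $D$, from which the claim is read off immediately.

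First I would compute $Y^\alpha$. Since $\nu_\infty = \delta_{x_*}$ we have $\alpha(\cdot) = F(\cdot, x_*)$, so with $\sigma \equiv 0$ equation~\eqref{eq:def of Y alpha} is the ODE $\dot z = b(z) + F(z,x_*)$; denote its flow by $\phi_t$. Stationarity of $\delta_{x_*}$ for~\eqref{eq:McKeanVlasov1} means $b(x_*) + F(x_*,x_*) = 0$, so $x_*$ is a fixed point of $\phi_t$, and the variational equation then gives $D\phi_t(x_*) = e^{tA}$. Plugging $\E_y F(x, Y^\alpha_t) = F(x, \phi_t(y))$ into~\eqref{eq: definition of Theta t i j} and differentiating via the chain rule yields $\nabla_y \E_y F(x, Y^\alpha_t)\big|_{y = x_*} = \nabla_y F(x, x_*)\, e^{tA}$, hence $\Theta_t(h)(x) = \nabla_y F(x, x_*)\, e^{tA}\, h(x_*)$; evaluating the output at $x = x_*$ identifies $\Theta_t$ with the matrix $B e^{tA}$ on $\cH \cong \R^d$.

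Next I would transform. The hypothesis $\sup_{t\ge 0} e^{\kappa t}\|e^{tA}\| < \infty$ allows taking $\kappa_* = \kappa$ in the definition of $D$, shows that every eigenvalue of $A$ has real part $\le -\kappa$, and guarantees that $\hat{\Theta}(z) = \int_0^\infty e^{-zt} B e^{tA}\,\d t = B (zI - A)^{-1}$ converges for all $z \in D = \{\Re z > -\kappa\}$. Because $\cH$ is finite-dimensional, the Fredholm determinant appearing in Theorem~\ref{th:main result} is the ordinary determinant, and the factorization $I - B(zI-A)^{-1} = (zI - A - B)(zI - A)^{-1}$ gives $\det(I - \hat{\Theta}(z)) = \det(zI - (A+B))/\det(zI - A)$ on $D$.

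Finally I would conclude. Since $\det(zI - A) \ne 0$ throughout $D$, the zeros of $\det(I - \hat{\Theta}(z))$ in $D$ are exactly the eigenvalues of $A+B$ lying in $D$, while any eigenvalue of $A+B$ outside $D$ has real part $\le -\kappa < 0$. Hence the supremum~\eqref{eq:spectral assumption} is negative — equivalently $\lambda' > 0$, with the convention that the supremum of the empty set is $-\infty$ — if and only if every eigenvalue of $A+B$ has negative real part, which is the assertion. I expect the substance to be routine; the points needing care are the $L^2(\delta_{x_*}) \cong \R^d$ reduction together with the Fredholm-versus-ordinary-determinant identification, the convergence of the Laplace integral from the decay hypothesis alone (Eberle's estimate~\eqref{eq:W1 bound eberle} being unavailable when $\sigma \equiv 0$), and the correct handling of the edge cases in matching ``zeros of $\det(I-\hat{\Theta})$ in $D$'' with ``eigenvalues of $A+B$'' (no such zero at all; a possible coincidence of an eigenvalue of $A$ with one of $A+B$, which cannot occur inside $D$).
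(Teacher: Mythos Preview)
Your proposal is correct and follows essentially the same approach as the paper: both identify $\cH=L^2(\delta_{x_*})$ with $\R^d$, compute $\hat\Theta(z)=B(zI-A)^{-1}$, and relate the zeros of $\det(I-\hat\Theta(z))$ in $D$ to the eigenvalues of $A+B$. Your determinant factorization $I-B(zI-A)^{-1}=(zI-A-B)(zI-A)^{-1}$ is a cleaner way to establish this link than the paper's eigenvector argument, and you read the equivalence directly off definition~\eqref{eq:spectral assumption} whereas the paper makes an unnecessary detour through the resolvent $\Omega_t$ and a Paley--Wiener argument; your route is the more economical of the two.
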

The proof is given in the Appendix. We now treat the case $\sigma$ small using a perturbation argument of $\sigma \equiv 0$.
Assume that for $\norm{\sigma}_2$ small enough, the McKean-Vlasov equation~\eqref{eq:McKeanVlasov} has a unique invariant measure in the neighborhood of $\delta_{x_*}$. That is, we assume that there exists $\rho_0, \sigma_0 > 0$ such that for any $\sigma \in M_d(\R)$ with $\det(\sigma) > 0$ and $\norm{\sigma}_2 \leq \sigma_0$, it holds that: 
\[ \text{Card} \{\nu \in \cP_1(\R^d):~ W_1(\nu, \delta_{x_*}) \leq \rho_0 \text{ and $\nu$ is an invariant probability measure of~\eqref{eq:McKeanVlasov}} \} = 1. \] 
We denote by $\nu^\sigma_\infty$ be this invariant distribution.
To emphasize the dependence on $\sigma$, until the end of this section, we also denote by $\Theta^\sigma_t$ the operator given by~\eqref{eq: definition of Theta t i j} when the SDE~\eqref{eq:def of Y alpha} has diffusion coefficient $\sigma$. Similarly, we denote by $\Omega^\sigma_t$ the operator defined by~\eqref{eq:link between Omega and Theta}.
The precise description of $\Theta^0_t$ and $\Omega^0_t$ is given in the proof of Lemma~\ref{lem:sigma is equal to zero}. In particular, we have
\[  \Theta^0_t(h)(x) = \nabla_y \cV(x, x_*) e^{t \nabla_x \cV(x_*, x_*)} \cdot h(x_*). \]
Assume that $x_*$ is a stable point for the ODE $\dot x = \cV(x, x)$. Then, by~\eqref{eq:ass-Theta0} and Lemma~\ref{lem:sigma is equal to zero},  it holds that
\begin{equation}
\label{eq:ass-Theta0-Omega0}
	\exists \kappa> 0, \quad \sup_{t \geq 0} e^{\kappa t} \left[  \norm{\Theta^0_t}_{\cL(\cH)} +  \norm{\Omega^0_t}_{\cL(\cH)} \right]< \infty.  
\end{equation}
\begin{proposition}\label{prop:sigma-small-stability}
	Assume that~\eqref{eq:ass-Theta0-Omega0} holds. In addition assume that for all $\epsilon > 0$, there exists $\theta > 0$ such that
	\begin{equation}
		\label{eq:stability-Omega-small-sigma-ass}
		\norm{\sigma}_2 \leq \theta \implies  \sup_{t \geq 0} e^{\kappa t} \norm{\Theta^\sigma_t - \Theta^0_t}_{\cL(\cH)} \leq \epsilon.  
	\end{equation}
Then there exists $\sigma_0, \lambda > 0$ such that
\[  \norm{\sigma}_2 \leq \sigma_0, \quad  \sup_{t \geq 0}  e^{\lambda t}  \norm{\Omega^\sigma_t}_{\cL(\cH)}< \infty. \]
Therefore Theorem~\ref{th:main result} applies and $\nu^\sigma_\infty$ is locally stable.
\end{proposition}
This result can be summarize has follows: assuming that $x_*$ is a stable equilibrium for the ODE $\dot{x} = \cV(x, x)$, if~\eqref{eq:stability-Omega-small-sigma-ass} holds then stability also hold with small noise.
Again, the proof is given on the Appendix.
Finally, let us comment on how the rate of convergence depends on $\sigma$. Under the assumptions of Proposition~\ref{prop:sigma-small-stability}, the constant $\lambda'$ given by~\eqref{eq:spectral assumption} is lower bounded by a strictly  positive constant independent of $\norm{\sigma}_2 \in [0, \sigma_0]$. However, the rate of convergence is $W_1$ norm still depends on $\sigma$: the bottleneck is coming from the constant $\kappa_*$ of~\eqref{eq:W1 bound eberle}, which in general vanishes as the noise goes to zero. In some specific examples, the rate of convergence does not vanish as $\sigma$ goes to zero. Consider for instance the case where $b(x) + \int_{\R^d} F(x, y) \nu^\sigma_\infty(\d y) = -\nabla V^\sigma (x)$ for some function $V^\sigma: \R^d \rightarrow \R$. Assume that $V^\sigma$ is a uniformly convex function, uniformly in $\sigma \in [0, \sigma_0]$:
\[ \exists \sigma_0, \eta_0 > 0: \forall \sigma \text{ with } \norm{\sigma}_2 \leq \sigma_0, \forall x \in \R^d, \quad  \nabla^2 V^\sigma(x) \geq \eta_0 I_d, \]
where $I_d$ denoted the identity matrix on $M_d(\R)$.
Then~\eqref{eq:W1 bound eberle} holds with constants independent of $\sigma$. In that case, the rate of convergence in $W_1$ norm in Theorem~\ref{th:main result} does not vanish as the noise vanishes. 
\subsubsection*{A simple explicit example}
We close this Section with a simple explicit example. Consider for $J \in \mathbb{R}^*$ the following McKean-Vlasov SDE on $\mathbb{R}$:
\begin{equation}
        \label{eq:toy MK cos}
        \d X_t = - X_t \d t + J \E \cos(X_t) \d t + \sqrt{2} \d B_t.
\end{equation}
The associated linear process $(Y^\alpha_t)$ is the solution of the Ornstein–Uhlenbeck SDE
\[ \d Y^\alpha_t = - Y^\alpha_t \d t + \alpha \d t + \sqrt{2} \d B_t.\]
This linear process admits a unique invariant probability measure given by
$\nu^\alpha_\infty = \mathcal{N}(\alpha, 1)$,
such that if $G$ is a standard Gaussian random variable, $\E \cos(Y^{\alpha, \nu^\alpha_\infty}_t) = \E \cos(\alpha
+ G) =  \frac{\cos(\alpha)}{\sqrt{e}}$.
We deduce that the invariant probability measures of~\eqref{eq:toy MK cos} are $\{ \mathcal{N}(\alpha, 1)~|~
\alpha \in \mathbb{R}, ~\frac{\sqrt{e}}{J} \alpha = \cos(\alpha)\}$.
Let $\alpha \in \mathbb{R}$ such that $\frac{\sqrt{e}}{J} \alpha = \cos(\alpha)$.
We have:
\[
\forall t \geq 0, \quad \Theta_t =  J \int_{\mathbb{R}}{ \frac{d}{dy} \E_y \cos(
Y^\alpha_t)  \nu_\infty^\alpha(\d y)} = -
\frac{J}{\sqrt{e}} e^{-t}
\sin(\alpha).
\]
So, for $\Re(z) > -1$, $\widehat{\Theta}(z) = - \frac{J}{z+1} e^{-1/2} \sin(\alpha)$
 and the equation
$\widehat{\Theta}(z) = 1$ has a unique solution $z = -J e^{-1/2} \sin(\alpha) -1$.
This root is strictly negative if and only if $J \sin(\alpha) > -\sqrt{e}$.
We deduce by Theorem~\ref{th:main result} that $\nu^\alpha_\infty$ is locally stable
 provided that $J \sin(\alpha) >
-\sqrt{e}$.
Recall that $\alpha \sqrt{e} = J \cos(\alpha)$. So  among all the invariant probability measures of~\eqref{eq:toy MK cos}, the (locally) stable ones are the $\mathcal{N}(\alpha, 1)$ with
\[  \alpha \tan(\alpha) > -1.  \]
\subsection{Notations}
For $x \in \R^d$, we denote by $|x|$ its Euclidean norm. Recall that $\cH := L^2(\nu_\infty)$ is the Hilbert space of measurable functions $h:\mathbb{R}^d \rightarrow \mathbb{R}^d$
such that \[ \norm{h}^2_{\cH} := \int{|h(x)|^2 \nu_\infty(\d x)} < \infty. \]
We denote by $\cK := W^{1, \infty}(\R^d; \R^d)$ the subspace of $\cH$ consisting of  all bounded and Lipschitz continuous functions $k: \R^d \rightarrow \R^d$. We equip $\cK$ with:
\[ \forall k \in \cK, \quad \norm{k}_{\cK} := \norm{k}_\infty + \norm{\nabla k}_\infty. \]
Note that, using Assumptions~\ref{ass:standing assumptions1} and~\ref{ass:standing assumption2}, the operators $\Theta_t$ and $\Omega_t$, defined by~\eqref{eq: definition of Theta t i j} and~\eqref{eq:volterra equation Theta Omega}, map $\cH$ to $\cK$.
Given $I$ a closed interval of $\mathbb{R}_+$, we denote by $C(I; \cK)$  the space of
continuous functions from $I$ to $\cK$.
Let $\alpha: \R^d \rightarrow \R^d$ satisfying~\eqref{eq:definition of alpha}. 
Let $k \in C(\mathbb{R}_+; \cK)$.
Consider $Y^{\alpha+ k, \nu}_{t}$ the solution of the following linear non-homogeneous $\mathbb{R}^d$-valued SDE
\begin{equation}
        \label{eq:linear non-homogeneous SDE}
	\d Y^{\alpha+k, \nu}_{t} = b(Y^{\alpha+k, \nu}_{t})\d t + \alpha(Y^{\alpha+k, \nu}_{t}) \d t + k_t(Y^{\alpha+k, \nu}_{t})\d t + \sigma \d B_t,
\end{equation}
where the initial condition satisfies $\Law(Y^{\alpha+k, \nu}_0) = \nu$.
Note that $Y^{\alpha+k, \nu}_{t}$ is a solution of~\eqref{eq:McKeanVlasov1} provided that $k$ satisfies the following
closure equation:
\begin{equation}
        \label{eq:closure equation}
\forall x \in \R^d, \quad \forall t \geq 0, \quad \alpha(x) + k_t(x) = \E F(x, Y^{\alpha+k, \nu}_{t}).
\end{equation}
Finally, for $y \in \mathbb{R}^d$ and $g$ a test function, we write $\E_y g(Y^{\alpha+k}_{t}) := \E g(Y^{\alpha+k, \delta_y}_{t})$.

A key ingredient in the proof of Theorem~\ref{th:main result} is the notion of Lions derivatives. 
A function $u: \cP_2(\R^d) \rightarrow \R$ is Lions differentiable at $\nu_0 \in \cP_2(\R^d)$ if there exists a deterministic function $\partial_\nu u(\nu_0): \R^d \rightarrow \R^d$ such that for every random variables $Z_0, H$ defined on the same probabilistic space $(\Omega, \cF, \P)$ with $\Law(Z_0) = \nu_0$ and $\E |H|^2 < \infty$, we have for $\nu = \Law(Z_0 + H)$:
\[ u(\nu) = u(\nu_0) + \E \left[ \partial_\nu u(\mu_0)(Z_0) \cdot H \right] + o(\sqrt{\E |H|^2}),  \quad \text{ as $\E |H|^2$ goes to zero}.  \]
We refer to~\cite{MR3752669} for a detailed  presentation of this theory. Let $\psi \in C^1(\R^d)$ with $\norm{\nabla \psi}_\infty < \infty$. We recall that the linear functional
\[ u(\nu) = \int_{\R^d} \psi(y) \nu(\d y) \]
is Lions differentiable at every $\nu_0 \in \cP_2(\R^d)$, with a Lions derivative given by
\[ \partial_\mu u(\nu_0)(y) = \nabla \psi (y). \]
\subsection{An integrated sensibility formula}\label{sec:integrated sensibility formula}
Let $\alpha: \R^d \rightarrow \R^d$ satisfying~\eqref{eq:definition of alpha}.
The goal of this Section is to prove the following ``integrated sensitivity  formula''.
\begin{proposition}\label{prop:integrated sensitivity formula}
	Let $k \in C(\mathbb{R}_+; \cK)$ and $\nu \in \mathcal{P}_1(\mathbb{R}^d)$. 
	Let $g \in C^2(\mathbb{R}^d)$ with $\norm{\nabla g}_\infty+ \norm{\nabla^2 g}_\infty < \infty$. It holds that for all 
	$t \geq 0$,
\[
	\E g(Y^{\alpha+k, \nu}_{t}) - \E g(Y^{\alpha, \nu}_{t}) = \int_0^t{ \int_{\mathbb{R}^d}{ \left[ \nabla_y \E_y g(Y^\alpha_{t - \theta}) \cdot 
			k_\theta(y) \right] \Law(Y^{\alpha+k, 
	\nu}_\theta)(\d y)} \d \theta}. 
\]
\end{proposition}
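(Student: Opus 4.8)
The plan is to recognize the identity as a Duhamel / variation-of-constants formula at the level of the marginal laws, and to prove it by differentiating in time. Write $m^k_t := \Law(Y^{\alpha+k,\nu}_t)$ and $m_t := \Law(Y^{\alpha,\nu}_t)$, and for $s \in [0,t]$ introduce the interpolation
\[ \Phi(s) := \int_{\R^d} \E_y g(Y^\alpha_{t-s}) \, m^k_s(\d y), \]
i.e. we run the perturbed dynamics up to time $s$ and then the unperturbed linear dynamics for the remaining time $t-s$. The two endpoints are $\Phi(0) = \E_{\nu} \E_\cdot g(Y^\alpha_t) = \E g(Y^{\alpha,\nu}_t)$ and $\Phi(t) = \int g\, dm^k_t = \E g(Y^{\alpha+k,\nu}_t)$ (using the Markov property of the linear SDE~\eqref{eq:linear non-homogeneous SDE} and~\eqref{eq:def of Y alpha}), so the claimed formula is exactly $\Phi(t) - \Phi(0) = \int_0^t \Phi'(s)\, \d s$, provided $\Phi$ is absolutely continuous with the right derivative.

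First I would compute $\Phi'(s)$. Set $G_{r}(y) := \E_y g(Y^\alpha_{r})$; by the gradient bound~\eqref{eq:gradient bound} and Assumption~\ref{ass:standing assumptions1}, $G_r \in C^2$ with bounds on $\nabla G_r$, $\nabla^2 G_r$ uniform on compact $r$-intervals, and $\partial_r G_r = \cL^\alpha G_r$ where $\cL^\alpha \varphi := (b+\alpha)\cdot\nabla\varphi + \tfrac12\mathrm{tr}(\sigma\sigma^\top \nabla^2\varphi)$ is the generator of~\eqref{eq:def of Y alpha}. On the other hand, $s \mapsto m^k_s$ solves the Fokker–Planck equation associated with the generator $\cL^{\alpha+k}_s\varphi := \cL^\alpha\varphi + k_s\cdot\nabla\varphi$, in the weak sense $\frac{\d}{\d s}\int \varphi\, \d m^k_s = \int \cL^{\alpha+k}_s\varphi\, \d m^k_s$ for $\varphi \in C^2_b$. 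Applying this with $\varphi = G_{t-s}$ and adding the contribution of the explicit $s$-dependence of $G_{t-s}$ (which contributes $-\int (\partial_r G_r)|_{r=t-s}\, \d m^k_s = -\int \cL^\alpha G_{t-s}\, \d m^k_s$), the two copies of $\cL^\alpha G_{t-s}$ cancel and we are left with
\[ \Phi'(s) = \int_{\R^d} k_s(y)\cdot \nabla_y G_{t-s}(y)\, m^k_s(\d y) = \int_{\R^d} \big[\nabla_y \E_y g(Y^\alpha_{t-s})\cdot k_s(y)\big]\, \Law(Y^{\alpha+k,\nu}_s)(\d y), \]
which is precisely the integrand in the statement. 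Integrating from $0$ to $t$ gives the result.

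The main obstacle is making the differentiation rigorous rather than formal: justifying that $\Phi$ is $C^1$ (or at least absolutely continuous) on $[0,t]$ and that one may differentiate under the integral sign and interchange the time derivative with the expectation. For the $s$-dependence through $G_{t-s}$ one needs enough regularity of $(r,y)\mapsto \E_y g(Y^\alpha_r)$ — continuity of $r \mapsto \cL^\alpha G_r$ in sup-norm on compacts and the parabolic estimate $\partial_r G_r = \cL^\alpha G_r$ — which follows from Assumption~\ref{ass:standing assumptions1}, Itô's formula applied to $g(Y^\alpha_r)$, and~\eqref{eq:gradient bound}; note $g \in C^2$ with bounded first and second derivatives is exactly what is assumed, and $\nu \in \cP_1$ gives the integrability needed so that $\E|(b+\alpha)(Y^{\alpha+k,\nu}_s)|$ etc. are finite (using that $b$ is Lipschitz, $\nabla F$ is bounded so $\alpha$ is Lipschitz, and $k_s \in \cK$ is bounded Lipschitz). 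For the $m^k_s$-dependence one uses the standard fact that marginals of an SDE with Lipschitz (in particular linearly-growing) coefficients solve the Fokker–Planck equation weakly against $C^2_b$ test functions, which applies to~\eqref{eq:linear non-homogeneous SDE}. A clean way to package all of this, avoiding delicate PDE regularity, is to instead apply Itô's formula directly to $s \mapsto G_{t-s}(Y^{\alpha+k,\nu}_s)$ on $[0,t]$: the $\d s$-terms are $\big(-\cL^\alpha G_{t-s} + \cL^{\alpha+k}_s G_{t-s}\big)(Y^{\alpha+k,\nu}_s) = k_s(Y^{\alpha+k,\nu}_s)\cdot\nabla G_{t-s}(Y^{\alpha+k,\nu}_s)$, the $\d B_s$-martingale term has zero expectation by the sup-norm bound on $\nabla G_{t-s}$, and taking expectations of $G_{t-s}(Y^{\alpha+k,\nu}_s)\big|_{s=0}^{s=t}$ yields the identity immediately. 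This is the route I would actually write up.
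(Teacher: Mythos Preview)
Your proof is correct. In fact, the Itô-formula route you settle on at the end --- applying Itô to $s \mapsto G_{t-s}(Y^{\alpha+k,\nu}_s)$ and observing that the drift contribution is $(\cL^{\alpha+k}_s - \cL^\alpha)G_{t-s} = k_s \cdot \nabla G_{t-s}$ --- is precisely the alternative proof the paper records in the Remark immediately following its main argument. So you have reproduced one of the two proofs the authors give.

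The paper's \emph{primary} proof is different and is worth noting, since it illustrates the Lions-derivative machinery that drives the rest of the paper. Rather than interpolating in the target time via $\Phi(s)$, the authors interpolate in the \emph{perturbation}: they define $k^{[\theta]}_u := k_u \indic{u \leq \theta}$ and differentiate $\theta \mapsto \E g(Y^{\alpha + k^{[\theta]},\nu}_t)$. At each $\theta$ the increment over $[\theta,\theta+\delta]$ is handled by freezing the process at time $\theta$, comparing two one-step Euler schemes (with and without $k_\theta$), and reading off the linear term via the Lions derivative of $\nu \mapsto \E g(Y^{\alpha,\nu}_{t-\theta})$ at $\mu_\theta = \Law(Y^{\alpha+k,\nu}_\theta)$. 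Your approach is more elementary and shorter, requiring only the backward equation $\partial_r G_r = \cL^\alpha G_r$ and Itô's formula; the paper's approach is more involved but serves the expository purpose of exhibiting the Lions-derivative interpretation $\partial_\nu u^g_{t,\theta}(\mu)(y) = \nabla_y \E_y g(Y^\alpha_{t-\theta})$, which is reused later (Section~\ref{sec:lions derivatives}) to identify $\Theta_t$ and $\Omega_t$ as Lions derivatives.
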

Without loss of generality, we can assume that the initial condition $\nu$ belongs to  $\cP_2(\R^d)$. We define for all $0 \leq \theta \leq t$:
\[ \forall \nu \in \cP_2(\R^d), \quad  u^{g}_{t, \theta}(\nu) := \E g(Y^{\alpha, \nu}_{t - \theta}). \]
By the Markov property, $u^g_{t, \theta}$ is linear with respect to $\nu$:
\[ u^g_{t, \theta}(\nu) = \int_{\R^d} \E_y g(Y^{\alpha}_{t - \theta}) \nu(\d y). \]
By~\cite[Th. 7.18]{MR3097957}, the function $y \mapsto \E_y g(Y^{\alpha}_{t - \theta})$ is continuously differentiable with a bounded derivative.
So $u^g_{t, \theta}$ is Lions differentiable with
\[ \partial_\nu u^g_{t, \theta}(\nu)(y)  = \nabla_y \E_y g(Y^\alpha_{t - \theta}). \]
Therefore, it suffices to prove that
\[  	\E g(Y^{\alpha+k, \nu}_{t}) - \E g(Y^{\alpha, \nu}_{t}) =  \int_0^t{  \E \left[  \partial_\nu 
	u^{g}_{t, 
\theta}(\Law(Y^{\alpha+k, \nu}_{\theta}))(Y^{\alpha+k, \nu}_{\theta}) \cdot k_\theta(Y^{\alpha+k, \nu}_{\theta}) \right]  
	\d \theta}.  \]
Given $\theta \geq 0$ and $k \in C(\mathbb{R}_+; \cK)$, we write for all $u \geq 0$:
	\begin{equation} 
	\label{eq:def de h theta}
	\forall y \in \R^d, \quad k^{[\theta]}_u(y) := \begin{cases} k_u(y) \quad \text{ if } u \leq \theta \\ 0 \quad \text { if } u > \theta \end{cases}	\end{equation}
The proof of Proposition~\ref{prop:integrated sensitivity formula} is deduced from the following Lemma and from the fundamental theorem of calculus.
\begin{lemma}\label{lem:derivative theta h}
	The function $\theta \mapsto \E g(Y^{\alpha+k^{[\theta]}, \nu}_{t})$
	is differentiable for all $\theta \in (0, t)$ and
	\begin{equation}
		\label{eq:derivative h theta}
		\frac{d}{d \theta}  \E g(Y^{\alpha+k^{[\theta]}, \nu}_{t}) = \E \partial_\nu u^{g}_{t, 
			\theta}(\Law(Y^{\alpha+k, \nu}_{\theta}))(Y^{\alpha+k, \nu}_{\theta}) \cdot k_\theta(Y^{\alpha+k, \nu}_{\theta}).  
	\end{equation}
\end{lemma}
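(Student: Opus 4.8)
The plan is to differentiate $\theta \mapsto \E g(Y^{\alpha+k^{[\theta]}, \nu}_{t})$ by decomposing the trajectory at the cutoff time $\theta$ and using the Markov property. Fix $\theta \in (0,t)$ and a small increment $\varepsilon > 0$. The processes $Y^{\alpha + k^{[\theta]}, \nu}$ and $Y^{\alpha + k^{[\theta+\varepsilon]}, \nu}$ have the \emph{same} law on $[0, \theta]$ (both feel the drift $b + \alpha + k$ there, since $k^{[\theta]}_u = k^{[\theta+\varepsilon]}_u = k_u$ for $u \le \theta$), and on $[\theta + \varepsilon, t]$ both feel only $b + \alpha$. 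On the intermediate window $[\theta, \theta+\varepsilon]$ the first process feels $b + \alpha$ and the second feels $b + \alpha + k$. So by conditioning on $\mathcal{F}_\theta$ and applying the Markov property at time $\theta$,
\begin{align*}
\E g(Y^{\alpha+k^{[\theta+\varepsilon]}, \nu}_{t}) - \E g(Y^{\alpha+k^{[\theta]}, \nu}_{t})
 &= \E\Big[ \E_{Y^{\alpha+k, \nu}_\theta}\, g\big(Y^{\alpha + k^{[\theta, \theta+\varepsilon]}}_{t - \theta}\big) - \E_{Y^{\alpha+k, \nu}_\theta}\, g\big(Y^{\alpha}_{t-\theta}\big)\Big],
\end{align*}
where $k^{[\theta,\theta+\varepsilon]}_u := k_{\theta + u}\,\indic{u \le \varepsilon}$ denotes the drift perturbation active only on the first $\varepsilon$ units of time of the shifted process. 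The goal is then to show that, as $\varepsilon \downarrow 0$, the inner bracket divided by $\varepsilon$ converges to $\nabla_y \E_y g(Y^\alpha_{t-\theta})\big|_{y = Y^{\alpha+k,\nu}_\theta} \cdot k_\theta(Y^{\alpha+k,\nu}_\theta)$, uniformly enough to pass the limit through the outer expectation; the right-hand limit at $\theta$ then identifies with~\eqref{eq:derivative h theta}, and the analogous argument with $\varepsilon < 0$ (removing a drift window) gives the left derivative, so $\theta \mapsto \E g(Y^{\alpha+k^{[\theta]},\nu}_t)$ is differentiable.

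The core is therefore a short-time expansion: for a starting point $y$, compare the process with drift $b + \alpha + k_{\theta+\cdot}$ on $[0,\varepsilon]$ (then $b+\alpha$ afterwards) against the process with drift $b + \alpha$ throughout. Writing $\phi(s, z) := \E_z g(Y^\alpha_{t - \theta - s})$ for $s \in [0,\varepsilon]$ — which by \cite[Th. 7.18]{MR3097957} is $C^{1}$ in $z$ with bounded gradient, and solves the backward Kolmogorov equation for the generator of $Y^\alpha$ — one applies It\^o's formula to $s \mapsto \phi(s, Y^{\alpha+k^{[\theta,\theta+\varepsilon]}}_s)$ between $0$ and $\varepsilon$. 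The generator terms cancel against $\partial_s \phi$, leaving exactly $\int_0^\varepsilon \nabla_z \phi(s, Y^{\cdot}_s) \cdot k_{\theta+s}(Y^{\cdot}_s)\, \d s$ plus a martingale term of zero expectation. Dividing by $\varepsilon$ and letting $\varepsilon \to 0$, the integrand at $s=0$ is $\nabla_z \phi(0, y) \cdot k_\theta(y) = \nabla_y \E_y g(Y^\alpha_{t-\theta}) \cdot k_\theta(y)$, using continuity of $s \mapsto \nabla_z \phi(s, \cdot)$ near $s = 0$ (away from the terminal time, since $\theta < t$), continuity of $s \mapsto k_{\theta + s}$ and right-continuity of the paths. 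This yields the pointwise-in-$y$ limit.

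The main obstacle is the uniform integrability needed to interchange $\lim_{\varepsilon \to 0}$ with the outer $\E[\,\cdot\,]$ over $Y^{\alpha+k,\nu}_\theta$. This is handled by the uniform bounds available: $\norm{\nabla g}_\infty, \norm{\nabla^2 g}_\infty < \infty$ give $\norm{\nabla_z \phi(s,\cdot)}_\infty \le C_* \norm{\nabla g}_\infty e^{-\kappa_*(t-\theta-s)}$ via~\eqref{eq:gradient bound} (uniform for $s \in [0,\varepsilon]$, $\theta$ in compacts of $(0,t)$), and $k \in C(\mathbb{R}_+;\cK)$ gives $\sup_{s \le \varepsilon} \norm{k_{\theta+s}}_\infty < \infty$; hence the difference quotient $\frac{1}{\varepsilon}\big[\E_y g(Y^{\alpha+k^{[\theta,\theta+\varepsilon]}}_{t-\theta}) - \E_y g(Y^\alpha_{t-\theta})\big]$ is bounded by a constant independent of $y$ and of $\varepsilon$ small. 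Dominated convergence then applies, and one may also need continuity of $\theta \mapsto \Law(Y^{\alpha+k,\nu}_\theta)$ in $W_1$ (standard for this SDE under Assumption~\ref{ass:standing assumptions1}) together with continuity of $y \mapsto \nabla_y \E_y g(Y^\alpha_{t-\theta}) \cdot k_\theta(y)$ to conclude that the right-hand side of~\eqref{eq:derivative h theta} is itself continuous in $\theta$, so that Proposition~\ref{prop:integrated sensitivity formula} follows by integrating~\eqref{eq:derivative h theta} over $\theta \in [0,t]$ via the fundamental theorem of calculus, noting $k^{[0]} \equiv 0$ and $k^{[t]} = k$ on $[0,t]$.
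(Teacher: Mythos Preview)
Your proposal is correct but follows a different route from the paper's proof of this lemma. The paper proves Lemma~\ref{lem:derivative theta h} by applying the Markov property at time $\theta+\delta$ (not $\theta$), then invoking the \emph{definition} of the Lions derivative of $u^g_{t,\theta+\delta}$ at the point $\mu^1_{\theta+\delta}$, and finally replacing $Y^1_{\theta+\delta}$ and $Y^2_{\theta+\delta}$ by one-step Euler approximations so that their difference is exactly $\delta\, k_\theta(Y_\theta)$ up to $o(\delta)$; the remaining error terms are controlled by the auxiliary Lemmas~\ref{lem:technical estimates SDE} and~\ref{ass:regualirty L derivative}. Your argument instead conditions at time $\theta$ and applies It\^o's formula to $\phi(s,z)=\E_z g(Y^\alpha_{t-\theta-s})$ on the short window $[0,\varepsilon]$, exploiting that $\phi$ solves the backward Kolmogorov equation so that only the drift mismatch $k_{\theta+\cdot}$ survives. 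This is precisely the alternative method the paper sketches in the Remark following Lemma~\ref{ass:regualirty L derivative}, used there to derive Proposition~\ref{prop:integrated sensitivity formula} directly rather than via the lemma. Your approach is arguably more elementary (it avoids Lions derivatives and Euler-scheme error estimates), but it requires $\phi\in C^{1,2}$ to justify It\^o's formula---you only claim $C^1$ in $z$; the needed $C^{1,2}$ regularity holds here because $\det\sigma>0$ and $g\in C^2$ with bounded derivatives, and the paper's Remark states this explicitly. The paper's Lions-derivative proof, by contrast, highlights the measure-derivative structure that is thematically central to the article.
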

\input{fig}
\begin{proof}
	Fix $\theta \in (0, t)$ and 
	$\delta > 0$ small enough such that $\theta+\delta \in (0, t)$. We write
\[	\begin{array}{ccc}
		Y_\theta := Y^{\alpha+k, \nu}_{\theta}, &\quad Y^1_{\theta+\delta} := Y^{\alpha+k^{[\theta]}, 
			\nu}_{\theta+\delta},&\quad Y^2_{\theta+\delta} := Y^{\alpha+k, \nu}_{\theta+\delta} \\
	\mu_\theta := \Law(Y_{\theta}), &\quad \mu^1_{\theta+\delta} := 
	\Law(Y^1_{\theta+\delta}), &\quad  \mu^2_{\theta+\delta} := \Law(Y^2_{\theta+\delta}).
	\end{array}
\]
	The notations are illustrated on Figure~\ref{fig:intregated_formula_illustrated}. We have by the Markov property 
	satisfied by $Y$ at time $\theta+\delta$
	\[ \E g(Y^{\alpha+k^{[\theta+\delta]}, \nu}_{t}) - \E g(Y^{\alpha+k^{[\theta]}, \nu}_{t}) = \E g(Y^{\alpha, 
	\mu^2_{\theta+\delta}}_{t -  
(\theta+\delta)}) -  
\E g(Y^{\alpha, \mu^1_{\theta+\delta}}_{t - (\theta+\delta)}).  \]
	By definition of the Lions derivative at the point $\mu^1_{\theta+\delta}$ we have
	\begin{equation}
		\label{eq:lions derivate in mu1}
		\E g(Y^{\alpha, \mu^2_{\theta+\delta}}_{t - (\theta+\delta)}) -  
		\E g(Y^{\alpha, \mu^1_{\theta+\delta}}_{t - (\theta+\delta)})=  \E \partial_\nu u^{g}_{t, 
			\theta+\delta}(\mu^1_{\theta+\delta})(Y^1_{\theta + \delta}) \cdot 
			(Y^2_{\theta+\delta} - Y^1_{\theta+\delta}) + o( \sqrt{\E \abs{Y^2_{\theta+\delta} - Y^1_{\theta+\delta}}^2} ).
	\end{equation}
By Lemma~\ref{lem:technical estimates SDE}~(a)  below, it holds that $o( \sqrt{\E \abs{Y^2_{\theta+\delta} - Y^1_{\theta+\delta}}^2}) = o(\delta)$ as $\delta$ goes to zero.
	We now approximate $Y^1_{\theta+\delta}$ and $Y^2_{\theta+\delta}$ by a one-step Euler scheme:
	\begin{align}
		\label{eq:euler scheme}
		Y^1_{\theta+\delta} &\approx \tilde{Y}^1_{\theta + \delta} := Y_\theta + \alpha(Y_\theta) 
		\delta + \sigma (W_{\theta+\delta} - W_\theta) \\
		Y^2_{\theta+\delta} &\approx \tilde{Y}^2_{\theta + \delta} :=  Y_\theta + \alpha(Y_\theta) \delta
		+ k_\theta(Y_\theta) \delta + \sigma  (W_{\theta+\delta} - W_\theta)\nonumber
	\end{align}
	Note that $\tilde{Y}^2_{\theta + \delta} -  \tilde{Y}^1_{\theta + \delta} = k_\theta(Y_\theta) \delta$.
	These one-step Euler schemes have an error in $L^2$ norm of size $o(\delta)$ (see Lemma~\ref{lem:technical estimates SDE}~(c) below), therefore:
	\[ \sqrt{ \E \abs{Y^1_{\theta+\delta} - \tilde{Y}^1_{\theta+\delta}}^2}  + \sqrt{ \E \abs{Y^2_{\theta+\delta} - \tilde{Y}^2_{\theta+\delta}}^2} = o(\delta), \]
so~\eqref{eq:lions derivate in mu1} gives
	\[	\E g(Y^{\alpha, \mu^2_{\theta+\delta}}_{t - (\theta+\delta)}) -  
	\E g(Y^{\alpha, \mu^1_{\theta+\delta}}_{t -  (\theta+\delta)}) =  \delta \E \left[ \partial_\nu u^{g}_{t, 
\theta+\delta}(\mu^1_{\theta+\delta})(Y^1_{\theta + \delta}) \cdot k_\theta(Y_\theta) \right] + o(\delta). \]
	Finally, one has
	\begin{align*}
		& \left|  \E \left[ \partial_\nu u^{g}_{t, 
		\theta+\delta}(\mu^1_{\theta+\delta})(Y^1_{\theta + \delta}) \cdot k_\theta(Y_\theta) \right] -  \E \left[ \partial_\nu 
		u^{g}_{t, 
	\theta}(\mu_\theta)(Y_\theta) \cdot k_\theta(Y_\theta) \right] \right| \\
		& \quad \leq \left|\E \left[ \partial_\nu u^{g}_{t, 
		\theta+\delta}(\mu^1_{\theta+\delta})(Y^1_{\theta + \delta}) \cdot k_\theta(Y_\theta) \right] - \E \left[ \partial_\nu 
		u^{g}_{t, 
	\theta + \delta}(\mu_\theta)(Y_\theta) \cdot k_\theta(Y_\theta) \right] \right| \\
		& \quad \quad + \left| \E \left[ \partial_\nu u^{g}_{t, \theta+\delta}(\mu_\theta)(Y_\theta) \cdot 
		k_\theta(Y_\theta) - \E \partial_\nu u^{g}_{t, \theta}(\mu_\theta)(Y_\theta) \cdot k_\theta(Y_\theta) \right] \right| 
		=: 
		A_1 + A_2.
	\end{align*}
By Lemma~\ref{ass:regualirty L derivative}~(a) there exists a constant $C(t)$ such that
	\[ A_1 \leq  C(t)  \norm{k_\theta}_{\cK} \sqrt{ \E |Y^1_{\theta+\delta} - Y_\theta |^2 } 
		\overset{\text{Lem.}~\ref{lem:technical estimates SDE}~(b)}{\leq} C(t) \sqrt{\delta} \sup_{\theta \in [0, 
t]}\norm{k_\theta}_{\cK}.   \]
	Let $\epsilon > 0$ be fixed. Lemma~\ref{ass:regualirty L derivative}~(b) yields for $\delta$ small enough:
	\[ A_2 \leq \epsilon \sup_{\theta \in [0, t]}\norm{k_\theta}_{\cK}. \]
	Altogether, we find that
	\[ \E g(Y^{a+k^{[\theta+\delta]}, \nu}_{t}) - \E g(Y^{a+k^{[\theta]}, \nu}_{t})  = \delta \E \partial_\nu u^{g}_{t, 
		\theta}(\mu_\theta)(Y_\theta) \cdot k_\theta(Y_\theta) + o(\delta). \]
	This ends the proof.
\end{proof}	
We used the following classical estimates (the constants depend on $\alpha$ and $k$):
\begin{lemma}\label{lem:technical estimates SDE}
	We have, with the notations introduced in the proof of Lemma~\ref{lem:derivative theta h}, 
	\begin{enumerate}[label=~(\alph*), leftmargin=*]
		\item it holds that $\E |Y^{2}_{\theta+\delta} - 
			Y^{1}_{\theta+\delta}|^2 \leq C(t) \delta^2$.
		\item it holds that $\E \left|Y^1_{\theta+\delta} - Y_\theta \right|^2 \leq C(t) \delta$.
		\item the Euler scheme~\eqref{eq:euler scheme} satisfies
		$\E |Y^1_{\theta+\delta} - \tilde{Y}^1_{\theta+\delta}|^2 + \E |Y^2_{\theta+\delta} - 
		\tilde{Y}^2_{\theta+\delta}|^2 = 
		o(\delta^2)$,
		as $\delta$ goes to zero.
	\end{enumerate}
\end{lemma}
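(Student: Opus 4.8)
The plan is to treat each of the three bounds as a routine $L^2$-estimate for the linear non-homogeneous SDE~\eqref{eq:linear non-homogeneous SDE}, using that on any fixed interval $[0,t]$ the drift $(u,x)\mapsto b(x)+\alpha(x)+k_u(x)$ is Lipschitz in $x$ uniformly in $u\in[0,t]$ — with constant $L+M$, where $L$ is a Lipschitz constant common to $b$ and $\alpha$ (the latter is Lipschitz since $\norm{\partial_{x_i}F}_\infty<\infty$ by Assumption~\ref{ass:standing assumptions1}) and $M:=\sup_{u\in[0,t]}\norm{k_u}_{\cK}<\infty$ since $k\in C(\R_+;\cK)$ — and hence of linear growth. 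First I would record the a priori bound $\sup_{u\in[0,t]}\E\abs{Y^{\alpha+k,\nu}_u}^2\le C(t)(1+\E\abs{Y_0}^2)<\infty$ (legitimate since we may assume $\nu\in\cP_2(\R^d)$), which also holds verbatim for $Y^1_\cdot=Y^{\alpha+k^{[\theta]},\nu}_\cdot$ because $\sup_u\norm{k^{[\theta]}_u}_{\cK}\le M$. Combining this moment bound with the linear growth of $b+\alpha$ and the boundedness of $k$, and writing an increment $Y^i_u-Y^i_s$ (for $0\le s\le u\le t$, $i=1,2$) as a drift integral plus $\sigma(W_u-W_s)$, Cauchy--Schwarz on the drift part gives $\E\abs{Y^i_u-Y^i_s}^2\le C(t)(u-s)$; taking $s=\theta$, $u=\theta+\delta$ for $i=1$ is exactly (b).

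For (a) I would use that $Y^1$ and $Y^2$ agree on $[0,\theta]$, so for $u\in[\theta,\theta+\delta]$ their difference satisfies $Y^2_u-Y^1_u=\int_\theta^u[(b+\alpha)(Y^2_r)-(b+\alpha)(Y^1_r)]\,\d r+\int_\theta^u k_r(Y^2_r)\,\d r$. Setting $\phi(u):=\E\abs{Y^2_u-Y^1_u}^2$, Cauchy--Schwarz and Lipschitzness bound the first integral's $L^2$-square by $L^2\delta\int_\theta^u\phi(r)\,\d r$, while $\norm{k_r}_\infty\le M$ bounds the second by $M^2(u-\theta)^2\le M^2\delta^2$. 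Thus $\phi(u)\le 2M^2\delta^2+2L^2\delta\int_\theta^u\phi(r)\,\d r$ on $[\theta,\theta+\delta]$, and Grönwall's inequality yields $\phi(\theta+\delta)\le 2M^2\delta^2 e^{2L^2\delta^2}\le C(t)\delta^2$.

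For (c) I would compare each process on $[\theta,\theta+\delta]$ with the one-step Euler scheme~\eqref{eq:euler scheme}, which freezes the full drift at the base state $Y_\theta$. For $Y^1$ this leaves $Y^1_{\theta+\delta}-\tilde Y^1_{\theta+\delta}=\int_\theta^{\theta+\delta}[(b+\alpha)(Y^1_u)-(b+\alpha)(Y_\theta)]\,\d u$; by Cauchy--Schwarz, Lipschitzness and the increment bound $\E\abs{Y^1_u-Y_\theta}^2\le C(t)(u-\theta)$ from the first paragraph, its $L^2$-square is $O(\delta^3)=o(\delta^2)$. For $Y^2$ there is the additional term $\int_\theta^{\theta+\delta}[k_u(Y^2_u)-k_\theta(Y_\theta)]\,\d u$, which I would split as $[k_u(Y^2_u)-k_u(Y_\theta)]+[k_u(Y_\theta)-k_\theta(Y_\theta)]$: the first bracket is at most $\norm{\nabla k_u}_\infty\abs{Y^2_u-Y_\theta}\le M\abs{Y^2_u-Y_\theta}$, contributing $O(\delta^{3/2})$ in $L^2$ after integration; the second is at most $\norm{k_u-k_\theta}_\infty\le\norm{k_u-k_\theta}_{\cK}=:\rho(u-\theta)$, where $\rho$ is a modulus of continuity of $u\mapsto k_u$ at $\theta$, contributing at most $\delta\,\rho(\delta)=o(\delta)$. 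Hence $\norm{Y^2_{\theta+\delta}-\tilde Y^2_{\theta+\delta}}_{L^2}=o(\delta)$, i.e. $\E\abs{\cdot}^2=o(\delta^2)$, which is (c).

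The only genuinely delicate point — and the reason (c) is phrased with $o(\delta^2)$ rather than a clean power of $\delta$ — is exactly the term $k_u(Y_\theta)-k_\theta(Y_\theta)$: since $u\mapsto k_u$ is assumed merely continuous (not Hölder) into $\cK$, it can only be controlled through a modulus of continuity. Everything else is the standard Grönwall / Cauchy--Schwarz toolkit; the remaining care is just in checking that every constant $C(t)$ is uniform over $u\in[0,t]$ and depends on the data only through $\E\abs{Y_0}^2$ and $M=\sup_{u\in[0,t]}\norm{k_u}_{\cK}$.
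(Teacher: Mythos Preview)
Your proposal is correct and supplies exactly the routine Grönwall / Cauchy--Schwarz computations that the paper omits: the paper does not prove this lemma at all, merely labeling the three bounds as ``classical estimates (the constants depend on $\alpha$ and $k$)''. Your argument for each item is the standard one, and your observation that the $o(\delta^2)$ in~(c) --- rather than a clean $O(\delta^3)$ --- stems from the mere continuity of $u\mapsto k_u$ in $\cK$ is precisely the point. One cosmetic remark: the Euler scheme~\eqref{eq:euler scheme} as printed in the paper carries only $\alpha(Y_\theta)\delta$ and omits $b(Y_\theta)\delta$; you have (correctly) read it as freezing the full drift $b+\alpha$, without which~(c) would fail.
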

We also used the following regularity results on $\partial_\nu u^{g}_{t, s}(\nu)(y) = \nabla_y \E_y g(Y^{\alpha}_{t - s})$. The proofs follow easily from the stochastic representation of $y \mapsto \nabla_y \E_y g(Y^\alpha_{t - s})$: in particular this function has a bounded derivative (because $\norm{\nabla g}_\infty + \norm{\nabla^2 g}_\infty < \infty$, see~\cite[Th. 7.18]{MR3097957}).  
\begin{lemma}\label{ass:regualirty L derivative}
	It holds that:
	\begin{enumerate}[label=~(\alph*), leftmargin=*]
		\item there exists a constant 
		$C(t)$ such that any square-integrable variables $Z, Z'$, 
		\[\sup_{0 \leq s \leq t }   \E \left|  \partial_\nu u^{g}_{t,s}(\Law(Z))(Z) -  
		\partial_\nu u^{g}_{t,s}(\Law(Z'))(Z') 
		\right|^2  
		\leq C(t) \E |Z-Z'|^2.
		\]
		\item the function $s \mapsto \partial_\nu u^{g}_{t,s}(\Law(Z))(Z)$ is continuous: for all 
		$\epsilon > 0$
		there exists $\delta > 0$:
		\[ \forall s, s' \in [0, t], \quad |s-s'| \leq \delta \implies \E \left|\partial_\nu u^{g}_{t,s'}(\Law(Z))(Z) - 
		\partial_\nu u^{g}_{t,s}(\Law(Z))(Z) \right| ^2 < 
		\epsilon.\]
	\end{enumerate}
\end{lemma}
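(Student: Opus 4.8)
The plan is to reduce both parts of Lemma~\ref{ass:regualirty L derivative} to routine $L^2$-estimates for the linear diffusion~\eqref{eq:def of Y alpha} and its first variation process, exactly as announced before the statement. First I would record that, by the Markov property and the linearity of $\nu\mapsto u^g_{t,s}(\nu)$ noted above, the quantity $\partial_\nu u^g_{t,s}(\nu)(y)=\nabla_y\E_y g(Y^\alpha_{t-s})=:\phi^g_{t-s}(y)$ does not depend on $\nu$; so both assertions really concern only the joint behaviour of $\phi^g_u(y)$ in $y\in\R^d$ and $u:=t-s\in[0,t]$. Writing $(Y^{\alpha,\delta_y}_u)_{u\ge0}$ for the solution of~\eqref{eq:def of Y alpha} started at $y$ and $J^y_u:=\nabla_y Y^{\alpha,\delta_y}_u$ for its first variation, $J^y$ solves $\dot J^y_u=\nabla(b+\alpha)(Y^{\alpha,\delta_y}_u)\,J^y_u$ with $J^y_0=I_d$; since Assumption~\ref{ass:standing assumptions1} makes $L:=\norm{\nabla(b+\alpha)}_\infty$ finite, one has $|J^y_u|\le e^{Lu}$ pathwise, together with the stochastic representation $\phi^g_u(y)=\E\big[(J^y_u)^\top\nabla g(Y^{\alpha,\delta_y}_u)\big]$ (the representation alluded to before the statement; see also~\cite[Th.~7.18]{MR3097957}).

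For part~(a), I would simply invoke the regularity recalled before the statement: by~\cite[Th.~7.18]{MR3097957}, since $\norm{\nabla g}_\infty+\norm{\nabla^2 g}_\infty<\infty$, each $\phi^g_u$ is $C^1$, and the bound it provides gives a constant $C(t)$ with $\sup_{u\in[0,t]}\norm{\nabla\phi^g_u}_\infty\le C(t)<\infty$. Then, for square-integrable $Z,Z'$ defined on a common probability space, the mean-value inequality yields $|\phi^g_u(Z)-\phi^g_u(Z')|\le C(t)\,|Z-Z'|$ almost surely, uniformly in $u\in[0,t]$; squaring and integrating gives (a).

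For part~(b), I would fix $0\le s\le s'\le t$, put $u:=t-s'\le u':=t-s$, and split
\[ \phi^g_{u'}(y)-\phi^g_u(y)=\E\big[(J^y_{u'}-J^y_u)^\top\nabla g(Y^{\alpha,\delta_y}_{u'})\big]+\E\big[(J^y_u)^\top\big(\nabla g(Y^{\alpha,\delta_y}_{u'})-\nabla g(Y^{\alpha,\delta_y}_u)\big)\big]. \]
Bounding the first term by $\norm{\nabla g}_\infty\,\E|J^y_{u'}-J^y_u|$ and the second by $\norm{\nabla^2 g}_\infty(\E|J^y_u|^2)^{1/2}(\E|Y^{\alpha,\delta_y}_{u'}-Y^{\alpha,\delta_y}_u|^2)^{1/2}$, I would then insert three elementary estimates: $|J^y_u|\le e^{Lt}$; the pathwise bound $|J^y_{u'}-J^y_u|=\big|\int_u^{u'}\nabla(b+\alpha)(Y^{\alpha,\delta_y}_r)J^y_r\,\d r\big|\le Le^{Lt}|u'-u|$; and the classical increment estimate $\E|Y^{\alpha,\delta_y}_{u'}-Y^{\alpha,\delta_y}_u|^2\le C(t)(1+|y|^2)|u'-u|$, coming from $Y^{\alpha,\delta_y}_{u'}-Y^{\alpha,\delta_y}_u=\int_u^{u'}(b+\alpha)(Y^{\alpha,\delta_y}_r)\,\d r+\sigma(B_{u'}-B_u)$, the linear growth of $b+\alpha$, and the moment bound $\sup_{r\le t}\E|Y^{\alpha,\delta_y}_r|^2\le C(t)(1+|y|^2)$. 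This produces $|\phi^g_{u'}(y)-\phi^g_u(y)|\le C(t)(1+|y|)|u'-u|^{1/2}$ for $|u'-u|\le 1$, uniformly in $y$; integrating against $\Law(Z)$ with $Z$ square-integrable then gives $\E|\phi^g_{t-s'}(Z)-\phi^g_{t-s}(Z)|^2\le 2C(t)^2(1+\E|Z|^2)|s-s'|$, and one finishes by taking $\delta:=\min\!\big(1,\epsilon/(2C(t)^2(1+\E|Z|^2))\big)$.

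\textbf{Where the difficulty lies.} I do not expect a genuine obstacle: the whole thing is standard moment and first-variation analysis for~\eqref{eq:def of Y alpha}. Two points need a little attention. First, $b+\alpha$ is only of linear growth, not bounded, which is why the $Y$-increment bound in~(b) carries the factor $1+|y|$; this is harmless once integrated against the square-integrable $Z$, though it does make $\delta$ in~(b) depend on $\E|Z|^2$ (which the statement allows). Second, part~(a) rests on the $C^1$-regularity of $y\mapsto\nabla_y\E_y g(Y^\alpha_u)$ together with a gradient bound uniform on $[0,t]$; I would take this from~\cite[Th.~7.18]{MR3097957}, and this is the one spot where the $C^2$ hypothesis on the coefficients (Assumption~\ref{ass:standing assumptions1}) is genuinely used. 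Deriving that bound directly would mean differentiating the first-variation representation once more and controlling the second variation process, which needs a growth bound on $\nabla^2(b+\alpha)$ not imposed by the assumptions — so leaning on the cited result is the cleaner route.
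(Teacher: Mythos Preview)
Your proposal is correct and follows exactly the route the paper indicates: the paper does not give a detailed proof but simply says the results ``follow easily from the stochastic representation of $y \mapsto \nabla_y \E_y g(Y^\alpha_{t-s})$'' and the bounded-derivative fact from~\cite[Th.~7.18]{MR3097957}, and you have filled in precisely those details. Your observation that the increment bound in~(b) carries a factor $(1+|y|)$ due to the merely linear growth of $b+\alpha$, and hence that $\delta$ depends on $\E|Z|^2$, is a useful point of care that the paper leaves implicit.
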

\begin{remark}
	It is also possible to prove Proposition~\ref{prop:integrated sensitivity formula} without using Lions derivatives. 
	Fix $t > 0$ and define for $s \in (0, t)$
	\[ (s, y) \mapsto   \phi(s, y) :=  \E_y g(Y^\alpha_{t-s}). \]
	Given $k \in C([0, t]; \cK)$, we denote by
	\[  \mathcal{L}^{\alpha+k}_\theta \psi := (b+ \alpha + k_\theta) \cdot \nabla \psi + \frac{1}{2} \sum_{i,j = 1}^d {(\sigma \sigma^*)}_{i,j} \partial_{x_i} \partial_{x_j} \psi, \quad \theta \in [0,t],  \] the infinitesimal 
	generator associated to $Y^{\alpha+k}$.
	It holds that $\phi \in C^{1,2}([0,t) \times \mathbb{R}^d)$ with 
	\[ \frac{\partial}{\partial s} \phi(s, y) = - \mathcal{L}_s^\alpha \phi(s, y). \]
	So, by Itô's lemma, 
	\begin{align*}
		\E \phi(s, Y^{\alpha+k, \nu}_s)  &= \E \phi(0, Y^{\alpha+k, \nu}_0) - \int_0^s{ \E 
			\mathcal{L}^\alpha_\theta \phi(\theta, Y^{\alpha+k, \nu}_\theta) \d \theta} 
		+ 
		\int_0^s{ \E \mathcal{L}^{\alpha+k}_\theta \phi(\theta, Y^{\alpha+k, \nu}_\theta) \d \theta} \\
		&= \E \phi(0, Y^{\alpha+k, \nu}_0) + \int_0^s{ \E \nabla_y \phi(\theta, 
		Y^{\alpha+k, \nu}_\theta) \cdot  k_\theta(Y^{\alpha+k, \nu}_\theta) \d \theta}.
	\end{align*}
	We used that $\mathcal{L}^{\alpha+k}_\theta \psi - \mathcal{L}^{\alpha}_\theta \psi  = \nabla \psi \cdot k_\theta$. Using the 
	definition of 
	$\phi$, we find:
	\[ 	\E \phi(s, Y^{\alpha+k, \nu}_s) = \E g(Y^{\alpha, \nu}_t) + \int_0^s{ \int_{\mathbb{R}^d}{ \nabla_y\E_y 
			g(Y^\alpha_{t -\theta}) \cdot k_\theta(y)   
			 \mathcal{L}(Y^{\alpha+k, \nu}_\theta)(\d y) \d \theta}}. \]
	Finally, we let $s$ converges to $t$ and find the stated formula.
\end{remark}
As a corollary, we obtain the following apriori control of the Wasserstein distance between two solutions of SDE driven by slightly different drifts. Recall that $C_*, \kappa_* > 0$ are given by~\eqref{eq:W1 bound eberle}.
\begin{corollary}\label{cor:wassertein control with h}
For all $t \geq 0$ and $k \in C([0,t]; \cK)$, it holds that
	\[  W_1(\Law(Y^{\alpha+k, \nu}_t), \Law(Y^{\alpha, \nu}_t)) \leq C \int_0^t{ e^{-\kappa_*(t-\theta)} \norm{k_\theta}_\infty \d \theta }. \]
\end{corollary}
\begin{proof}
	Let $g \in C^2(\R^d)$. Using~\eqref{eq:gradient bound} and Proposition~\ref{prop:integrated sensitivity formula}, we have for $g \in C^2(\R^d)$, 
	\[ \abs{\E g(Y^{\alpha + k, \nu}_t) - \E g(Y^{\alpha, \nu}_t)} \leq C_* \norm{\nabla g}_\infty \int_0^t e^{-\kappa_*(t-\theta)} \norm{k_\theta}_\infty  \d \theta. \]
	This inequality also holds if $g \in C^1(\R^d)$ by a standard approximation argument.
\end{proof}
Note that by choosing $g = F(x, \cdot)$, Proposition~\ref{prop:integrated sensitivity formula} gives:
\begin{equation}\label{eq:representation formula}
\E F(x, Y^{\alpha+k, \nu}_t) - \E F(x, Y^{\alpha,\nu}_t) = \int_0^t{ \int_{\R^d} \nabla_y \E_y F(x, Y^{\alpha}_{t-\theta}) \cdot k_\theta(y) \Law(Y^{\alpha+k, \nu}_\theta)(\d y) \d \theta}.
\end{equation}
Recall that $\Theta_t(k)$ is defined by~\eqref{eq: definition of Theta t i j}.
When $\nu = \nu_\infty$ and when $k$ is small, we obtain:
\[ \E F(x, Y^{\alpha+k, \nu_\infty}_t) - \alpha(x) \approx \int_0^t{ \int_{\R^d} \nabla_y \E_y F(x, Y^\alpha_{t-\theta}) \cdot k_\theta(y) \nu_\infty(\d y) \d \theta} = \int_0^t{ \Theta_{t-\theta}(k_\theta)(x) \d \theta}.
\]
This observation is crucially used in the next Section.
\subsection{Control of the non-linear interactions}
Recall that $(X^\nu_t)$ denotes the solution of the McKean-Vlasov equation~\eqref{eq:McKeanVlasov1}. 
For all $t \geq 0$ and $\nu \in \cP_1(\R^d)$, we define
\begin{align*} 
	\varphi^\nu_t(x) &:= \E F(x, Y^{\alpha, \nu}_t) - \alpha(x), \\
	k^\nu_t(x) &:= \E F(x, X^\nu_t) - \alpha(x).
\end{align*}
Recall that $\Omega_t(h)$ is defined by~\eqref{eq:link between Omega and Theta}.
In this Section, we prove that:
\begin{proposition}\label{prop:control hnu phinu Omeganu}
		For all $T > 0$, there is a constant $C_T$ such that for all $t \in [0, T]$, for all  $x \in \R^d$ and for all $\nu \in \cP_1(\R^d)$:
		\[ \left|k^\nu_t(x) - \varphi^\nu_t(x) - \int_0^t{\Omega_{t-s} (\varphi^\nu_s)(x) \d s} \right| \leq  C_T{(W_1(\nu, \nu_\infty))}^2. \]
\end{proposition}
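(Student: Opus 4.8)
The plan is to exploit the fact, already noted at the end of Section~\ref{sec:integrated sensibility formula}, that the nonlinear process is a linear non-homogeneous one in disguise: since $\int_{\R^d} F(x,y)\,\Law(X^\nu_t)(\d y) = \alpha(x) + k^\nu_t(x)$, the process $(X^\nu_t)$ solves~\eqref{eq:linear non-homogeneous SDE} with feedback $k = k^\nu$, and the closure equation~\eqref{eq:closure equation} holds. First I would record the a priori estimates that drive everything: on any horizon $[0,T]$ one has the finite-time Lipschitz stability $W_1(\Law(X^\nu_t),\nu_\infty) \leq C_T\,W_1(\nu,\nu_\infty)$ (a Gr\"onwall argument using the $W_1$-optimal coupling of the initial laws, the Lipschitz continuity of $b$, and the bounds on the derivatives of $F$ from Assumption~\ref{ass:standing assumptions1}), and, as a consequence, $k^\nu \in C(\R_+;\cK)$ with $\sup_{t\in[0,T]}\norm{k^\nu_t}_{\cK} \leq C_T\,W_1(\nu,\nu_\infty)$ (because $k^\nu_t(\cdot) = \int_{\R^d}F(\cdot,y)\,(\Law(X^\nu_t)-\nu_\infty)(\d y)$ and $\nabla_x F, \nabla^2_{x,y}F$ are bounded). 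Applying the representation formula~\eqref{eq:representation formula} with $k=k^\nu$ and $Y^{\alpha+k^\nu,\nu}=X^\nu$, and using $\E F(x,X^\nu_t)-\E F(x,Y^{\alpha,\nu}_t) = k^\nu_t(x)-\varphi^\nu_t(x)$, yields the \emph{exact} identity
\[
k^\nu_t(x) - \varphi^\nu_t(x) = \int_0^t \int_{\R^d} \nabla_y \E_y F(x, Y^\alpha_{t-\theta}) \cdot k^\nu_\theta(y)\,\Law(X^\nu_\theta)(\d y)\,\d\theta.
\]

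Next I would freeze $\Law(X^\nu_\theta)$ at $\nu_\infty$ inside the integral; since $\int_{\R^d}\nabla_y\E_y F(x,Y^\alpha_{t-\theta})\cdot k^\nu_\theta(y)\,\nu_\infty(\d y) = \Theta_{t-\theta}(k^\nu_\theta)(x)$ by~\eqref{eq: definition of Theta t i j}, this turns the identity into the perturbed Volterra equation $k^\nu_t = \varphi^\nu_t + \rho_t + \int_0^t \Theta_{t-\theta}(k^\nu_\theta)\,\d\theta$, where $\rho_t(x)$ is the integral of the same integrand against $\Law(X^\nu_\theta)-\nu_\infty$. The point is that $\rho_t$ is quadratic in $W_1(\nu,\nu_\infty)$: for fixed $x,\theta,t$ the test function $y \mapsto \nabla_y\E_y F(x,Y^\alpha_{t-\theta})\cdot k^\nu_\theta(y)$ is bounded and Lipschitz with both norms $\leq C_T\norm{k^\nu_\theta}_{\cK}$, uniformly in $x$ and in $t-\theta\in[0,T]$, by the gradient bound~\eqref{eq:gradient bound} and the first- and second-order estimates of~\cite[Th. 7.18]{MR3097957}; integrating such a function against $\Law(X^\nu_\theta)-\nu_\infty$ costs a factor $W_1(\Law(X^\nu_\theta),\nu_\infty)$, so that
\[
\sup_{x}|\rho_t(x)| \leq C_T \sup_{\theta\in[0,T]}\norm{k^\nu_\theta}_{\cK}\cdot \sup_{\theta\in[0,T]}W_1(\Law(X^\nu_\theta),\nu_\infty) \leq C_T\,{(W_1(\nu,\nu_\infty))}^2 .
\]

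Finally I would invert the Volterra equation using that $\Omega$ is the resolvent of $\Theta$, i.e.\ relation~\eqref{eq:volterra equation Theta Omega}. Since the forcing term $\varphi^\nu_s+\rho_s$ lies in $\cK\subset\cH$ and depends continuously on $s$, the standard variation-of-constants formula for linear Volterra equations (uniqueness of the solution plus~\eqref{eq:volterra equation Theta Omega}, cf.~\cite{MR1050319}) gives $k^\nu_t = (\varphi^\nu_t+\rho_t) + \int_0^t \Omega_{t-s}(\varphi^\nu_s+\rho_s)\,\d s$, hence
\[
k^\nu_t(x) - \varphi^\nu_t(x) - \int_0^t \Omega_{t-s}(\varphi^\nu_s)(x)\,\d s = \rho_t(x) + \int_0^t \Omega_{t-s}(\rho_s)(x)\,\d s.
\]
To bound the right-hand side in sup-norm, I would use that $\norm{\Omega_u(h)}_\infty \leq C_T\norm{h}_{\cH}$ for $u\in[0,T]$ — which follows from~\eqref{eq:control infty norm theta}, the relation~\eqref{eq:volterra equation Theta Omega}, and the uniform convergence on compacts of the Neumann series~\eqref{eq:link between Omega and Theta} — together with $\norm{\rho_s}_{\cH}\leq\norm{\rho_s}_\infty\leq C_T\,{(W_1(\nu,\nu_\infty))}^2$ since $\nu_\infty$ is a probability measure. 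This gives $\norm{k^\nu_t - \varphi^\nu_t - \int_0^t \Omega_{t-s}(\varphi^\nu_s)\,\d s}_\infty \leq C_T\,{(W_1(\nu,\nu_\infty))}^2$, which is the claim.

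The main obstacle is not the Volterra bookkeeping but the two a priori estimates that make the freezing error genuinely quadratic: the finite-horizon Lipschitz stability $W_1(\Law(X^\nu_t),\nu_\infty)\leq C_T W_1(\nu,\nu_\infty)$, and the \emph{uniform in $x$ and in time} control of the Lipschitz norm of $y\mapsto\nabla_y\E_y F(x,Y^\alpha_\tau)$. Each of $\norm{k^\nu_\theta}_{\cK}$ and $W_1(\Law(X^\nu_\theta),\nu_\infty)$ contributes exactly one power of $W_1(\nu,\nu_\infty)$, so the whole proof reduces to establishing these two (softer) facts carefully, after which the identification $X^\nu = Y^{\alpha+k^\nu,\nu}$, the integrated sensitivity formula, and the resolvent identity do the rest.
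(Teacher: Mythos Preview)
Your proposal is correct and follows essentially the same route as the paper: the a priori bounds on $k^\nu$ and $W_1(\Law(X^\nu_t),\nu_\infty)$ (Lemmas~\ref{lem:technical estimate h nu} and~\ref{lem:Lipschitz McKean}), then the integrated sensitivity formula plus freezing of $\Law(X^\nu_\theta)$ at $\nu_\infty$ to obtain a Volterra equation with kernel $\Theta$ and a quadratic remainder (Lemma~\ref{lem:technical estimate h nu phi nu}), and finally passing from $\Theta$ to $\Omega$. The only cosmetic difference is in this last step: the paper iterates the approximate Volterra relation by hand (building the partial sums $\psi^i_t$ and letting $i\to\infty$), whereas you invoke the resolvent identity~\eqref{eq:volterra equation Theta Omega} to invert the exact equation $k^\nu_t = (\varphi^\nu_t+\rho_t) + \int_0^t \Theta_{t-\theta}(k^\nu_\theta)\,\d\theta$ in one stroke---the two arguments are equivalent, yours being slightly more streamlined.
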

The first step is to show the following apriori estimate on $k^\nu_t$:
\begin{lemma}\label{lem:technical estimate h nu}
	Let $T > 0$. There exists a constant $C_T$ such that
	\[ \forall \nu \in \cP_1(\R^d), \quad \sup_{t \in [0, T]} \norm{k^\nu_t}_{\cK} \leq C_T W_1(\nu, \nu_\infty). \]
	In addition,  $k^\nu \in C([0, T]; \cK)$.
\end{lemma}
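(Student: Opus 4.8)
The plan is to reduce everything to a finite-horizon Wasserstein stability estimate for the McKean--Vlasov flow. Write $\mu_t := \Law(X^\nu_t)$. Since $\nu_\infty$ is invariant, $\alpha(x)=\int_{\R^d}F(x,y)\,\nu_\infty(\d y)$, so that
\[ k^\nu_t(x) = \int_{\R^d} F(x,y)\,(\mu_t-\nu_\infty)(\d y), \qquad \nabla_x k^\nu_t(x)=\int_{\R^d}\nabla_x F(x,y)\,(\mu_t-\nu_\infty)(\d y), \]
the differentiation under the integral being legitimate because the $x$-derivatives of $F$ are bounded (Assumption~\ref{ass:standing assumptions1}). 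This form already shows that $k^\nu_t$ is bounded even though $F$ and $\alpha$ need not be: the $y$-independent part of $F$ cancels against $\mu_t-\nu_\infty$. By the Kantorovich--Rubinstein duality and the boundedness of $\nabla_y F$ and of the mixed second derivatives of $F$,
\[ \norm{k^\nu_t}_{\cK}=\norm{k^\nu_t}_\infty+\norm{\nabla_x k^\nu_t}_\infty \le \big(\norm{\nabla_y F}_\infty+\norm{\nabla_y\nabla_x F}_\infty\big)\,W_1(\mu_t,\nu_\infty), \]
so it suffices to prove $W_1(\mu_t,\nu_\infty)\le C_T\,W_1(\nu,\nu_\infty)$ for all $t\in[0,T]$, with $C_T$ depending only on $T$, $b$ and $F$.

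For this I would run a synchronous coupling: let $X^\nu$ and $X^{\nu_\infty}$ solve~\eqref{eq:McKeanVlasov1} driven by the same Brownian motion, with $(X^\nu_0,X^{\nu_\infty}_0)$ an optimal coupling of $(\nu,\nu_\infty)$; since $\nu_\infty$ is invariant, $\Law(X^{\nu_\infty}_t)=\nu_\infty$, hence $(X^\nu_t,X^{\nu_\infty}_t)$ is a coupling of $(\mu_t,\nu_\infty)$ and $W_1(\mu_t,\nu_\infty)\le\E|X^\nu_t-X^{\nu_\infty}_t|$. The difference $X^\nu_t-X^{\nu_\infty}_t$ has no martingale part, so $|X^\nu_t-X^{\nu_\infty}_t|$ is bounded pathwise by $|X^\nu_0-X^{\nu_\infty}_0|$ plus the time integral of the drift difference. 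Splitting that difference as $b(X^\nu_s)-b(X^{\nu_\infty}_s)$, $\int[F(X^\nu_s,y)-F(X^{\nu_\infty}_s,y)]\mu_s(\d y)$ and $\int F(X^{\nu_\infty}_s,y)(\mu_s-\nu_\infty)(\d y)$, the first two are $\le C\,|X^\nu_s-X^{\nu_\infty}_s|$ by the Lipschitz continuity of $b$ and the bound on $\nabla_x F$, while the third is $\le\norm{\nabla_y F}_\infty W_1(\mu_s,\nu_\infty)\le\norm{\nabla_y F}_\infty\,\E|X^\nu_s-X^{\nu_\infty}_s|$. Taking expectations and applying Gr\"onwall gives $\E|X^\nu_t-X^{\nu_\infty}_t|\le e^{Ct}\,\E|X^\nu_0-X^{\nu_\infty}_0|=e^{Ct}W_1(\nu,\nu_\infty)$, which is the claimed estimate with $C_T=e^{CT}$, and combined with the first paragraph it yields the norm bound uniformly in $t\in[0,T]$ and in $\nu\in\cP_1(\R^d)$.

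For the continuity statement, from $k^\nu_t(x)-k^\nu_s(x)=\E[F(x,X^\nu_t)-F(x,X^\nu_s)]$ and the analogous identity for $\nabla_x k^\nu$ one obtains $\norm{k^\nu_t-k^\nu_s}_{\cK}\le(\norm{\nabla_y F}_\infty+\norm{\nabla_y\nabla_x F}_\infty)\,\E|X^\nu_t-X^\nu_s|$, and $t\mapsto X^\nu_t$ is $L^1$-continuous on $[0,T]$: this follows from the a.s.\ continuity of the paths together with the standard uniform moment bound $\sup_{t\in[0,T]}\E|X^\nu_t|<\infty$, itself a Gr\"onwall estimate using that $b$ and $x\mapsto\int F(x,y)\mu_t(\d y)$ have at most linear growth (the latter because $F(0,\cdot)$ is integrable against $\mu_t$ and $\nu_\infty\in\cP_1(\R^d)$, thanks to $\nabla_y F$ being bounded). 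Hence $k^\nu\in C([0,T];\cK)$.

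I do not expect a serious obstacle here; the one point to handle with care is closing the Gr\"onwall loop in the coupling argument, where the measure-dependent term $\int F(X^{\nu_\infty}_s,y)(\mu_s-\nu_\infty)(\d y)$ must be fed back as $\E|X^\nu_s-X^{\nu_\infty}_s|$, and one must verify that the resulting constant depends only on $T$ and on the (bounded) derivatives of $b$ and $F$, not on $\nu$. Note that only Assumption~\ref{ass:standing assumptions1} (and well-posedness of~\eqref{eq:McKeanVlasov1}) enters; the confinement Assumption~\ref{ass:standing assumption2} plays no role in this finite-horizon lemma.
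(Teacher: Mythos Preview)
Your proof is correct and follows essentially the same route as the paper. The paper factors the argument through a separate lemma (Lemma~\ref{lem:Lipschitz McKean}) giving $W_1(\Law(X^{\mu_1}_t),\Law(X^{\mu_2}_t))\le C_T W_1(\mu_1,\mu_2)$ via exactly the synchronous coupling and Gr\"onwall you carry out inline; for the continuity it records the explicit rate $\E|X^\nu_t-X^\nu_s|\le C(T,\nu)\sqrt{t-s}$ instead of your dominated-convergence argument, but this is a cosmetic difference.
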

\begin{proof}We have
\[ k^\nu_t(x) = \int_{\R^d} F(x,y) (\Law(X^\nu_t)- \nu_\infty)(\d y). \]
Using Lemma~\ref{lem:Lipschitz McKean} below, we deduce that
$|k^\nu_t(x)| \leq C_T \norm{ \nabla_y F}_\infty W_1(\nu, \nu_\infty)$.
Similarly,
\[ |\nabla k^\nu_t(x)| \leq C_T \norm{ \nabla^2_{x,y} F}_\infty W_1(\nu, \nu_\infty).\] This shows the bound on $\sup_{t \in [0, T]} \norm{k^\nu_t}_{\cK}$. Moreover,
\[ k^\nu_t(x) - k^\nu_s(x) = \int_{\R^d} F(x, y) \left( \Law(X^\nu_t) - \Law(X^\nu_s)  \right)(\d y). \]
In addition, for all $T > 0$ and $\nu \in \cP_1(\R^d)$, there exists a constant $C(T, \nu)$ such that
\[ \forall 0 \leq s \leq t \leq T, \quad  W_1(\Law(X^\nu_t), \Law(X^\nu_s)) \leq \E \left| X^\nu_t - X^\nu_s \right|\leq C(T, \nu) \sqrt{t-s}. \]
We deduce that $k^\nu \in C([0, T]; \cH)$. 
\end{proof}
Next, using Proposition~\ref{prop:integrated sensitivity formula}, we show that:
\begin{lemma}\label{lem:technical estimate h nu phi nu}
	For all $T > 0$, there is a constant $C_T$ such that for all $t \in [0, T]$, for all  $x \in \R^d$ and for all $\nu \in \cP_1(\R^d)$:
	\[ \left|k^\nu_t(x) - \varphi^\nu_t(x) - \int_0^t{\Theta_{t-s} (k^\nu_s)(x) \d s} \right| \leq  C_T{(W_1(\nu, \nu_\infty))}^2. \]
\end{lemma}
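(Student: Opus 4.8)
The plan is to rewrite the McKean--Vlasov process as a \emph{linear} non-homogeneous diffusion driven by $k^\nu$, apply the integrated sensitivity formula of Proposition~\ref{prop:integrated sensitivity formula}, and then quantify the error coming from evaluating the sensitivity kernels along $\Law(X^\nu_\theta)$ instead of along $\nu_\infty$. The point is that $k^\nu_t(x) = \E F(x, X^\nu_t) - \alpha(x)$ says precisely that $(X^\nu_t)$ solves the linear non-homogeneous SDE~\eqref{eq:linear non-homogeneous SDE} with correction term $k = k^\nu$ and that $k^\nu$ satisfies the closure equation~\eqref{eq:closure equation}; in other words $X^\nu = Y^{\alpha + k^\nu, \nu}$.

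First I would note that $k^\nu \in C([0,T];\cK)$ by Lemma~\ref{lem:technical estimate h nu}, so Proposition~\ref{prop:integrated sensitivity formula} applies with $g = F(x,\cdot)$ and $k = k^\nu$. Using~\eqref{eq:representation formula} together with $\E F(x, Y^{\alpha+k^\nu,\nu}_t) = \alpha(x) + k^\nu_t(x)$ and $\E F(x, Y^{\alpha,\nu}_t) = \alpha(x) + \varphi^\nu_t(x)$, this gives
\[ k^\nu_t(x) - \varphi^\nu_t(x) = \int_0^t \int_{\R^d} \nabla_y \E_y F(x, Y^\alpha_{t-\theta}) \cdot k^\nu_\theta(y)\, \Law(X^\nu_\theta)(\d y)\, \d\theta. \]
Subtracting $\int_0^t \Theta_{t-s}(k^\nu_s)(x)\, \d s$, which by~\eqref{eq: definition of Theta t i j} is the same double integral but with $\nu_\infty$ in place of $\Law(X^\nu_\theta)$, then leaves
\[ k^\nu_t(x) - \varphi^\nu_t(x) - \int_0^t \Theta_{t-s}(k^\nu_s)(x)\, \d s = \int_0^t \int_{\R^d} \nabla_y \E_y F(x, Y^\alpha_{t-\theta}) \cdot k^\nu_\theta(y)\, \big(\Law(X^\nu_\theta) - \nu_\infty\big)(\d y)\, \d\theta. \]

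Next I would bound each inner integral by Kantorovich--Rubinstein duality. For fixed $x$ and $\theta \in [0,t]$, set $G(y) := \nabla_y \E_y F(x, Y^\alpha_{t-\theta}) \cdot k^\nu_\theta(y)$. The gradient bound~\eqref{eq:gradient bound} with $f = F(x,\cdot)$ gives $\|\nabla_y \E_y F(x, Y^\alpha_{t-\theta})\|_\infty \le C_*[F]$, and the smoothing estimate of~\cite[Th. 7.18]{MR3097957} (valid since $F(x,\cdot) \in C^2$ with first and second derivatives bounded uniformly in $x$ by Assumption~\ref{ass:standing assumptions1}) controls $\|\nabla_y^2 \E_y F(x, Y^\alpha_{t-\theta})\|_\infty$ uniformly in $x$ and in $\theta \in [0,T]$; combined with $\sup_{\theta \in [0,T]} \|k^\nu_\theta\|_{\cK} \le C_T W_1(\nu, \nu_\infty)$ from Lemma~\ref{lem:technical estimate h nu}, the product rule gives $\|G\|_\infty + \mathrm{Lip}(G) \le C_T W_1(\nu, \nu_\infty)$. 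Hence
\[ \Big| \int_{\R^d} G\, \d\big(\Law(X^\nu_\theta) - \nu_\infty\big) \Big| \le \mathrm{Lip}(G)\, W_1(\Law(X^\nu_\theta), \nu_\infty) \le C_T W_1(\nu, \nu_\infty)\, W_1(\Law(X^\nu_\theta), \nu_\infty), \]
and the Lipschitz continuity of the McKean--Vlasov flow (Lemma~\ref{lem:Lipschitz McKean}) gives $W_1(\Law(X^\nu_\theta), \nu_\infty) \le C_T W_1(\nu, \nu_\infty)$ for $\theta \in [0,T]$. Plugging this back and integrating over $\theta \in [0,t] \subseteq [0,T]$ produces the factor $(W_1(\nu, \nu_\infty))^2$ and yields the claim.

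The genuinely routine parts are the product-rule estimate on $G$ and the final integration; the only point needing a little care is the \emph{second}-order regularity bound $\|\nabla_y^2 \E_y F(x,\cdot)(Y^\alpha_s)\|_\infty \le C_T$, uniform on $s \in [0,T]$ and uniform in $x$, which is exactly what both parts of Assumption~\ref{ass:standing assumptions1} and the flow regularity for~\eqref{eq:def of Y alpha} supply. Conceptually, the mechanism producing the quadratic error is that one factor of $W_1(\nu, \nu_\infty)$ comes from the smallness of the kernel $k^\nu$ and the other from the displacement of $\Law(X^\nu_\theta)$ away from $\nu_\infty$.
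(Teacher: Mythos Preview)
Your proof is correct and follows essentially the same route as the paper: you identify $X^\nu = Y^{\alpha+k^\nu,\nu}$, apply the integrated sensitivity formula with $g = F(x,\cdot)$, write the remainder as $\int_0^t \int G\,\d(\Law(X^\nu_\theta)-\nu_\infty)\,\d\theta$, and bound it via the Lipschitz constant of $G$ (of size $C_T W_1(\nu,\nu_\infty)$ by Lemma~\ref{lem:technical estimate h nu}) times $W_1(\Law(X^\nu_\theta),\nu_\infty)$ (of size $C_T W_1(\nu,\nu_\infty)$ by Lemma~\ref{lem:Lipschitz McKean}). The paper phrases the remainder as $\E G(X^\nu_\theta)-\E G(X^{\nu_\infty}_\theta)$ rather than invoking Kantorovich--Rubinstein explicitly, but this is the same estimate; your care about the uniform-in-$x$ second-order bound on $y\mapsto \E_y F(x,Y^\alpha_s)$ is exactly the point the paper leaves implicit.
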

\begin{proof}
Using the closure equation~\eqref{eq:closure equation}, we have:
\begin{align*} k^\nu_t(x) &= \E F(x, Y^{\alpha+k^\nu, \nu}_t) - \alpha(x) \\
	&=\E F(x, Y^{\alpha+k^\nu, \nu}_t) - \E F(x, Y^{\alpha, \nu}_t)  + \varphi^\nu_t(x).  
\end{align*}
We apply Proposition~\ref{prop:integrated sensitivity formula} and obtain:
\[ k^\nu_t(x) - \varphi^\nu_t(x) = \int_0^t{ \int_{\R^d} \nabla_y \E_y F(x, Y^{\alpha}_{t-\theta}) \cdot k^\nu_\theta(y) \Law(X^{\nu}_\theta)(\d y) \d \theta}. \]
We used that $\Law(Y^{\alpha+k^\nu, \nu}_\theta) = \Law(X^\nu_\theta)$.
Let 
\[ G^x_{t, \theta}(y) := \nabla_y \E_y F(x, Y^{\alpha}_{t-\theta}) \cdot k^\nu_\theta(y).   \]
We deduce that:
\[ k^\nu_t(x) - \varphi^\nu_t(x) = \int_0^t{ \Theta_{t-\theta}(k^\nu_s)(x) \d s } + R_t(x),\]
where
\[ R_t(x) := \int_0^t{ \left( \E G^x_{t, \theta}(X^\nu_\theta) - \E G^x_{t, \theta}(X^{\nu_\infty}_\theta)  \right)\d \theta. }\]
Using Lemma~\ref{lem:technical estimate h nu}, we deduce for all $T > 0$, there exists a constant $C_T$, such that
\[ \forall x \in \R^d, \forall 0 \leq \theta \leq t \leq T, \quad \norm{\nabla G^x_{t, \theta}}_\infty \leq C_T W_1(\nu, \nu_\infty). \]
Therefore, using Lemma~\ref{lem:Lipschitz McKean} below, we deduce that $\left| R_t(x) \right| \leq C_T {(W_1(\nu, \nu_\infty))}^2$. 
\end{proof}
By iterating the estimate of Lemma~\ref{lem:technical estimate h nu phi nu}, we deduce the proof of Proposition~\ref{prop:control hnu phinu Omeganu}.
\begin{proof}[Proof of Proposition~\ref{prop:control hnu phinu Omeganu}]
	For $i \in \N$, define: 
	\[ \psi^i_t(x) := \varphi^\nu_t(x) + \int_0^t{ \Theta_{t-\theta} (\varphi^\nu_\theta)(x) \d \theta} + \cdots  + \int_0^t{ \Theta^{\otimes i}_{t - \theta}(\varphi^\nu_\theta)(x) \d \theta }. \]
	Let $D_T$ a constant such that for all $k \in \cK$, $\sup_{t \in [0, T]} \norm{\Theta_t(k)}_\infty \leq D_T \norm{k}_\infty$. By induction, we have:
	\[ \forall t \in [0, T], \quad \norm{\Theta^{\otimes i}_t(k)}_\infty \leq {(D_T)}^i \frac{t^{i-1}}{(i-1)!} \norm{k}_\infty. \]
Let $C_T$ be the constant of Lemma~\ref{lem:technical estimate h nu phi nu}. By induction, we deduce that:
\[ \norm{ k^\nu_t - \psi^i_t - \int_0^t{ \Theta^{\otimes (i+1)}_{t-\theta}(k^\nu_\theta) \d \theta} }_\infty \leq C_T {(W_1(\nu, \nu_\infty))}^2 \left( 1 + D_T t + \cdots + \frac{{(D_T t)}^i}{i!} \right). \]
	To conclude, it suffices to take the limit $i \rightarrow \infty$: $\psi^i_t$ converges uniformly to $\varphi^\nu_t + \int_0^t{\Omega_{t-\theta}(\varphi^\nu_s) \d s}$, and $\int_0^t{ \Theta^{\otimes (i+1)}_{t-\theta}(k^\nu_\theta) \d \theta}$ converges uniformly to zero. This ends the proof. 
\end{proof}
We used the following apriori estimate on the solution of~\eqref{eq:McKeanVlasov1}.
\begin{lemma}\label{lem:Lipschitz McKean}
	Let $T > 0$. There exists a constant $C_T$ such that for all $\mu_1, \mu_2 \in \cP_1(\R^d)$, 
	\[ \forall t \in [0, T], \quad W_1(\Law(X^{\mu_1}_t), \Law(X^{\mu_2}_t)) \leq C_T W_1(\mu_1, \mu_2). \]
\end{lemma}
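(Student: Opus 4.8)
The plan is to run a synchronous coupling of the two solutions with a common Brownian motion and then apply Grönwall's inequality. What keeps the whole argument at the level of first moments — as it must, since the hypotheses only control $\cP_1(\R^d)$ — is that $\sigma$ is constant: the stochastic integrals cancel in the difference of the two solutions, so no second moment ever appears.

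Concretely, I would first fix a $W_1$-optimal coupling $(\xi_1,\xi_2)$ of $(\mu_1,\mu_2)$, so that $\Law(\xi_i)=\mu_i$ and $\E|\xi_1-\xi_2|=W_1(\mu_1,\mu_2)$, and on a probability space also carrying a Brownian motion $(B_t)$ independent of $(\xi_1,\xi_2)$ let $(X^{\mu_1}_t)$ and $(X^{\mu_2}_t)$ be the pathwise solutions of~\eqref{eq:McKeanVlasov1} started from $\xi_1,\xi_2$ and driven by the \emph{same} $(B_t)$; set $\mu^i_t:=\Law(X^{\mu_i}_t)$. Since $b$ and $F$ have at most linear growth under Assumption~\ref{ass:standing assumptions1}, a standard Grönwall estimate gives the a priori bound $\sup_{t\le T}\E|X^{\mu_i}_t|<\infty$. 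Then, putting $\Delta_t:=X^{\mu_1}_t-X^{\mu_2}_t$, the common diffusion term cancels and
\[
\Delta_t=\xi_1-\xi_2+\int_0^t\Big(b(X^{\mu_1}_s)-b(X^{\mu_2}_s)+\int_{\R^d}F(X^{\mu_1}_s,y)\,\mu^1_s(\d y)-\int_{\R^d}F(X^{\mu_2}_s,y)\,\mu^2_s(\d y)\Big)\d s,
\]
so that $|\Delta_t|$ is bounded by $|\xi_1-\xi_2|$ plus the time integral of the modulus of the drift difference, with no Itô term. I would bound the integrand by splitting the interaction difference into $\int_{\R^d}\bigl(F(X^{\mu_1}_s,y)-F(X^{\mu_2}_s,y)\bigr)\mu^1_s(\d y)$, which together with the Lipschitz bound on $b$ is $\le(L_b+\norm{\nabla_x F}_\infty)|\Delta_s|$ (here $L_b$ is the Lipschitz constant of $b$, and $\norm{\nabla_x F}_\infty,\norm{\nabla_y F}_\infty<\infty$ by Assumption~\ref{ass:standing assumptions1}), plus $\int_{\R^d}F(X^{\mu_2}_s,y)(\mu^1_s-\mu^2_s)(\d y)$; for the latter I would write, for each fixed $x$, $\int_{\R^d}F(x,y)(\mu^1_s-\mu^2_s)(\d y)=\E\bigl[F(x,X^{\mu_1}_s)-F(x,X^{\mu_2}_s)\bigr]$, whose modulus is at most $\norm{\nabla_y F}_\infty\,\E|\Delta_s|$, a deterministic bound that survives the substitution $x=X^{\mu_2}_s$.

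Taking expectations (Fubini being licit by the a priori moment bound) and writing $L:=L_b+\norm{\nabla_x F}_\infty+\norm{\nabla_y F}_\infty$ gives $\E|\Delta_t|\le W_1(\mu_1,\mu_2)+L\int_0^t\E|\Delta_s|\,\d s$, hence $\E|\Delta_t|\le e^{Lt}W_1(\mu_1,\mu_2)$ by Grönwall. Since $(X^{\mu_1}_t,X^{\mu_2}_t)$ is a coupling of $\Law(X^{\mu_1}_t)$ and $\Law(X^{\mu_2}_t)$, this yields $W_1(\Law(X^{\mu_1}_t),\Law(X^{\mu_2}_t))\le e^{LT}W_1(\mu_1,\mu_2)$ on $[0,T]$, i.e.\ the claim with $C_T:=e^{LT}$. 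I do not expect any genuine obstacle here: the only points that need a little care are the a priori first-moment bound (routine under Assumption~\ref{ass:standing assumptions1}) used to justify Fubini and Grönwall, and the — here decisive — remark that the constancy of $\sigma$ removes the martingale part of $\Delta_t$, so that $L^1$ estimates suffice and one never has to leave $\cP_1(\R^d)$.
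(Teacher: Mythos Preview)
Your proposal is correct and follows essentially the same approach as the paper: synchronous coupling with a $W_1$-optimal initial coupling, the same splitting of the drift difference into the $b$-term, the $x$-variation of $F$, and the measure-variation of $F$, followed by Grönwall to obtain $C_T=e^{LT}$. You are a bit more explicit than the paper about why the argument stays in $L^1$ (the constancy of $\sigma$ kills the martingale part of $\Delta_t$) and about the a priori moment bound justifying Fubini, but these are elaborations rather than a different method.
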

\begin{proof}
	Consider $(X^{\mu_1}_t, X^{\mu_2}_t)$ the solutions of~\eqref{eq:McKeanVlasov1} coupled with the same 
	Brownian motion. The initial conditions $(X^{\mu_1}_0, X^{\mu_2}_0)$ are chosen such that 
	$\E |X^{\mu_1}_0 - X^{\mu_2}_0| = W_1(\mu_1, \mu_2)$. Let $\mu^1_t := \Law(X^{\mu_1}_t)$ and $\mu^2_t := \Law(X^{\mu_2}_t)$. 
	From~\eqref{eq:McKeanVlasov1} and Assumption~\ref{ass:standing assumptions1}, we have
	\begin{align*} \E \left| X^{\mu_1}_t - X^{\mu_2}_t \right| & \leq \E \left| X^{\mu_1}_0 - X^{\mu_2}_0 \right| 
		 +  \E \int_0^t \left|  b(X^{\mu_1}_s) - b(X^{\mu_2}_s ) \right| \d s \\
 	& \quad + \int_0^t \int_{\R^d} \left| \E F(X^{\mu_1}_s, y) - \E F(X^{\mu_2}_s, y)\right| \Law(X^{\mu_1}_s)(\d y) \d s \\
	& \quad + \int_0^t \left| \int_{\R^d} \E F(X^{\mu_2}_s, y) (\Law(X^{\mu_1}_s) - \Law(X^{\mu_2}_s))(\d y) \right| \d s.
	\end{align*}
	The functions $F$ and $b$ are Lipschitz, so there exists a constant $L$ such that 
	\[ \E \left| X^{\mu_1}_t - X^{\mu_2}_t \right| \leq  \E \left| X^{\mu_1}_0 - X^{\mu_2}_0 \right|  + L \int_0^t{  \E \left| X^{\mu_1}_s - X^{\mu_2}_s\right| \d s}. \]
	By Grönwall's inequality, we deduce that
	\[ W_1(\Law(X^{\mu_1}_t), \Law(X^{\mu_2}_t)) \leq \E |X^{\mu_1}_t - X^{\mu_2}_t| \leq e^{L t} \E |X^{\mu_1}_0 - X^{\mu_2}_0| = e^{Lt} W_1(\mu_1, \mu_2). \]
\end{proof}
\subsection{Proof of Theorem~\ref{th:main result}}\label{sec:proof of main result 1}
We now give the proof of Theorem~\ref{th:main result}.
By combining Proposition~\ref{prop:control hnu phinu Omeganu}, Corollary~\ref{cor:wassertein control with h}, and Proposition~\ref{prop:paley-wiener}, we obtain:
\begin{lemma}\label{lem:final lemma}
	Assume that $\lambda' > 0$, where $\lambda'$ is given by~\eqref{eq:spectral assumption}.
	Let $\lambda \in (0, \lambda')$. There exists a constant $C_\lambda$ such that for all $T > 0$, there is a constant $C_T$ such that for all $\nu \in \cP_1(\R^d)$ and for all $t \in [0, T]$:
	\[ W_1(\Law(X^\nu_t), \nu_\infty) \leq C_\lambda e^{-\lambda t} W_1(\nu,\nu_\infty) + C_T{(W_1(\nu,\nu_\infty))}^2.  \]
\end{lemma}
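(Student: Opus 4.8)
Recall, as in the proof of Lemma~\ref{lem:technical estimate h nu phi nu}, that $X^\nu$ solves the linear non-homogeneous equation~\eqref{eq:linear non-homogeneous SDE} with drift correction $k^\nu$ and initial law $\nu$, so that $\Law(X^\nu_t)=\Law(Y^{\alpha+k^\nu,\nu}_t)$ for all $t\geq 0$. Fix $\lambda\in(0,\lambda')$, which we take $<\kappa_*$. The plan is to split, by the triangle inequality for $W_1$,
\[ W_1(\Law(X^\nu_t),\nu_\infty)\leq W_1(\Law(Y^{\alpha+k^\nu,\nu}_t),\Law(Y^{\alpha,\nu}_t))+W_1(\Law(Y^{\alpha,\nu}_t),\nu_\infty), \]
and to bound each term by $C_\lambda e^{-\lambda t}W_1(\nu,\nu_\infty)+C_T{(W_1(\nu,\nu_\infty))}^2$. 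The second term is immediate from Eberle's estimate~\eqref{eq:W1 bound eberle}: since $\nu_\infty$ is invariant for~\eqref{eq:def of Y alpha}, coupling the two initial conditions optimally gives $W_1(\Law(Y^{\alpha,\nu}_t),\nu_\infty)\leq C_* e^{-\kappa_* t}W_1(\nu,\nu_\infty)\leq C_* e^{-\lambda t}W_1(\nu,\nu_\infty)$.

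For the first term, Proposition~\ref{prop:wassertein control with h} applied to $k=k^\nu$ (which lies in $C([0,T];\cK)$ by Lemma~\ref{lem:technical estimate h nu}) gives $W_1(\Law(Y^{\alpha+k^\nu,\nu}_t),\Law(Y^{\alpha,\nu}_t))\leq C\int_0^t e^{-\kappa_*(t-s)}\norm{k^\nu_s}_\infty\,\d s$, so the whole estimate reduces to an exponential-in-$s$ bound on $\norm{k^\nu_s}_\infty$, up to a quadratic error. To get this, I would first observe that $\varphi^\nu_t(x)=\int_{\R^d}F(x,y)(\Law(Y^{\alpha,\nu}_t)-\nu_\infty)(\d y)$, hence, by Kantorovich duality, $\norm{\varphi^\nu_t}_{\cH}\leq\norm{\varphi^\nu_t}_\infty\leq\norm{\nabla_y F}_\infty\,W_1(\Law(Y^{\alpha,\nu}_t),\nu_\infty)\leq C e^{-\kappa_* t}W_1(\nu,\nu_\infty)$. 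The spectral assumption then enters through Proposition~\ref{prop:paley-wiener}: its proof in fact produces a $t$-uniform $L^\infty$ estimate $\norm{\Omega_t(h)}_\infty\leq C_\lambda e^{-\lambda t}\norm{h}_{\cH}$, with $C_\lambda=C(1+\int_0^\infty\norm{e^{\lambda s}\Omega_s}_{\cL(\cH)}\,\d s)$ finite precisely because $\lambda<\lambda'$. Feeding these two facts into Proposition~\ref{prop:control hnu phinu Omeganu} gives, for $t\in[0,T]$,
\[ \norm{k^\nu_t}_\infty\leq\norm{\varphi^\nu_t}_\infty+\int_0^t C_\lambda e^{-\lambda(t-s)}\norm{\varphi^\nu_s}_{\cH}\,\d s+C_T{(W_1(\nu,\nu_\infty))}^2\leq C_\lambda e^{-\lambda t}W_1(\nu,\nu_\infty)+C_T{(W_1(\nu,\nu_\infty))}^2, \]
where the convolution is estimated by $\int_0^t e^{-\lambda(t-s)}e^{-\kappa_* s}\,\d s\leq(\kappa_*-\lambda)^{-1}e^{-\lambda t}$ (this is where $\lambda<\kappa_*$ is used), and $C_\lambda$ does not depend on $T$.

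It then remains to insert this bound into the estimate coming from Proposition~\ref{prop:wassertein control with h} and to use the elementary convolution bounds $\int_0^t e^{-\kappa_*(t-s)}e^{-\lambda s}\,\d s\leq(\kappa_*-\lambda)^{-1}e^{-\lambda t}$ and $\int_0^t e^{-\kappa_*(t-s)}\,\d s\leq\kappa_*^{-1}$, which yields $W_1(\Law(Y^{\alpha+k^\nu,\nu}_t),\Law(Y^{\alpha,\nu}_t))\leq C_\lambda e^{-\lambda t}W_1(\nu,\nu_\infty)+C_T{(W_1(\nu,\nu_\infty))}^2$; combining with the bound on the second term of the triangle inequality finishes the proof. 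The one genuinely delicate step is extracting the $T$-uniform $L^\infty$ bound on $\Omega_t$ from the Paley--Wiener argument; everything else is Gronwall-type bookkeeping and the convolution estimates above, the only subtlety being to keep the rate constant $C_\lambda$ independent of $T$ while letting the quadratic-error constant $C_T$ (inherited from Proposition~\ref{prop:control hnu phinu Omeganu}) depend on $T$.
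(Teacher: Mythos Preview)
Your proof is correct and follows essentially the same approach as the paper: the same triangle-inequality splitting $W_1(\Law(X^\nu_t),\nu_\infty)\leq W_1(\Law(Y^{\alpha+k^\nu,\nu}_t),\Law(Y^{\alpha,\nu}_t))+W_1(\Law(Y^{\alpha,\nu}_t),\nu_\infty)$, the same use of Eberle's bound~\eqref{eq:W1 bound eberle} on the second term, Proposition~\ref{prop:wassertein control with h} on the first, Proposition~\ref{prop:control hnu phinu Omeganu} to control $\norm{k^\nu_t}_\infty$, and the $L^\infty$ estimate on $\Omega_t$ extracted from the proof of Proposition~\ref{prop:paley-wiener}. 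The only cosmetic difference is that you first isolate the bound on $\norm{k^\nu_t}_\infty$ and then convolve, whereas the paper writes out the full convolution and bounds each piece in place; the arithmetic is identical.
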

\noindent Importantly, the constant $C_\lambda$ above does not depend on $T$.
\begin{proof}
	We write
	\begin{align*}
		W_1(\Law(X^\nu_t), \nu_\infty) &=  W_1(\Law(Y^{\alpha+k^\nu, \nu}_t), \nu_\infty) \\
					      & \leq  W_1(\Law(Y^{\alpha+k^\nu, \nu}_t), \Law(Y^{\alpha, \nu}_t)) + W_1(\Law(Y^{\alpha, \nu}_t), \nu_\infty) \\
					      & \leq~ C_* \int_0^t{ e^{-\kappa_*(t-\theta)} \norm{k^\nu_\theta}_\infty \d \theta} + C_* W_1(\nu, \nu_\infty) e^{-\kappa_* t}. 
	\end{align*}
		We used Corollary~\ref{cor:wassertein control with h} to estimate $W_1(\Law(Y^{\alpha+k^\nu, \nu}_t), \Law(Y^{\alpha, \nu}_t))$  and~\eqref{eq:W1 bound eberle} as well as Markov property to estimate $W_1(\Law(Y^{\alpha, \nu}_t), \nu_\infty)$.
	Applying Proposition~\ref{prop:control hnu phinu Omeganu}, we deduce that
	\begin{align*} \int_0^t{ e^{-\kappa_*(t-\theta)} \norm{k^\nu_\theta}_\infty \d \theta} & \leq \int_0^t{ e^{-\kappa_*(t-\theta)} \left[ \norm{\varphi^\nu_\theta}_\infty + \int_0^\theta{ \norm{\Omega_{\theta-u}(\varphi^\nu_u)}_\infty  \d u } \right] \d\theta }  + C_T {(W_1(\nu, \nu_\infty))}^2.
	\end{align*}
	The estimate~\eqref{eq:gradient bound} implies that $\norm{\varphi^\nu_\theta}_\infty \leq C_* W_1(\nu, \nu_\infty) \norm{\nabla_y F}_\infty e^{-\kappa_* \theta}$.
	Fix $\lambda \in (0, \lambda')$.
	In view of~\eqref{eq:control infty norm theta},~\eqref{eq:volterra equation Theta Omega} and the proof of Proposition~\ref{prop:paley-wiener}, the spectral assumption $\lambda' > 0$ implies that there exists a constant $C$ such that
	\[  \forall h \in \cH, \quad \norm{\Omega_t(h)}_\infty \leq C e^{-\lambda t} \norm{h}_{\cH}. \]
	Therefore, using that $\lambda < \kappa_*$, we have
	\[ \int_0^\theta \norm{\Omega_{\theta-u}(\varphi^\nu_u)}_\infty \d u \leq C \int_0^\theta e^{-\lambda(\theta-u)} e^{-\kappa_* u} W_1(\nu, \nu_\infty) \d u \leq C_\lambda e^{-\lambda \theta} W_1(\nu, \nu_\infty). \]
	Altogether, we deduce the stated inequality. 
\end{proof}
Finally, the proof of Theorem~\ref{th:main result} is deduced from Lemma~\ref{lem:final lemma} by following the argument of~\cite[Proposition 5.2]{zbMATH06380861}.
\begin{proof}[Proof of Theorem~\ref{th:main result}.]
We choose $T$ large enough such that $C_\lambda e^{-\lambda T} \leq \frac{1}{4}$. We choose $\epsilon > 0$ small enough such that 
\[ W_1(\nu, \nu_\infty) \leq \epsilon \implies C_T{(W_1(\nu, \nu_\infty))}^2 \leq \frac{1}{4} W_1(\nu, \nu_\infty). \]
Therefore we have, by induction, provided that $W_1(\nu, \nu_\infty) \leq \epsilon$:
\[ W_1(\Law(X^\nu_{kT}), \nu_\infty) \leq {(1/2)}^k W_1(\nu, \nu_\infty). \]
We write $t = kT  + s$ for some $s \in [0, T)$. 
Using Lemma~\ref{lem:Lipschitz McKean}, there exists a constant $C$ such that
\[  W_1(\Law(X^\nu_t), \nu_\infty) \leq C {(1/2)}^k W_1(\nu, \nu_\infty) \leq \frac{C}{2} e^{-c t} W_1(\nu, \nu_\infty),\]
where $c := \frac{\log(2)}{T}$. This ends the proof of Theorem~\ref{th:main result}.
\end{proof}
\subsection{Connections with Lions derivatives}\label{sec:lions derivatives}
In this Section, we give a probabilistic interpretation of the linear maps $\Theta_t(h)$ and $\Omega_t(h)$.
Let $(\Omega, \cF, \P)$ be a probability space.
Let $Z_0, H \in L^2(\Omega, \cF, \P)$ such that $\Law(Z_0) = \nu_\infty$. We denote by $h: \R^d \rightarrow \R^d$ a measurable function such that:
\[ \P(\d \omega) p.s. \quad \E [H~|~Z_0] = h(Z_0). \] 
It follows from $\E |H|^2 < \infty$ and from the Cauchy-Schwarz inequality that 
$h \in \cH$. Define for all $x \in \R^d$, $t \geq 0$ and $\nu \in \cP_2(\R^d)$:
\begin{align*}
	u^x_t(\nu) &:= \E F(x, Y^{\alpha, \nu}_t), \\
	v^x_t(\nu) &:= \E F(x, X^\nu_t). 
\end{align*}
\begin{proposition}Under Assumptions~\ref{ass:standing assumptions1} and~\ref{ass:standing assumption2}, we have:
	\begin{enumerate}[label=~(\alph*)]
		\item There exists a constant $C_T$ such that for all random variables $Z_0, H$ with $\Law(Z_0) = \nu_\infty$ and $\E |H|^2 < \infty$,  for all $t \in [0, T]$ and for all $x$ we have, for $\nu = \Law(Z_0 + H)$:
	\[ \left| u^x_t(\nu) - \alpha(x)- \Theta_t(h)(x) \right| \leq C_T \E |H|^2.  \]

		\item The function $u^x_t$ is Lions differentiable at $\nu_\infty$, with a derivative given by
			\[ \partial_\nu u^x_t(\nu_\infty)(y) = \nabla_y \E_y F(x, Y^{\alpha}_t). \]
	\item There exists a constant $C_T$ such that for all random variables $Z_0, H$ with $\Law(Z_0) = \nu_\infty$ and $\E |H|^2 < \infty$,  for all $t \in [0, T]$ and for all $x$ we have, for $\nu = \Law(Z_0 + H)$:
	\[ \left| v^x_t(\nu) - \alpha(x)- \Omega_t(h)(x) \right| \leq C_T \E |H|^2.  \]
\item The function $v^x_t$ is Lions differentiable at $\nu_\infty$, with a derivative given by
	\begin{equation}
		\label{eq:lions derivative McKean}
		\partial_\nu v^x_t(\nu_\infty)(y) = \nabla_y \E_y F(x,Y^\alpha_t) + \int_0^t{ \Omega_{t-s}( \nabla_y \E_y F(\cdot, Y^\alpha_s))(x) \d s }. 
	\end{equation}
			\end{enumerate}
\end{proposition}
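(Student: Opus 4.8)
The plan is to prove (a) by a second‑order Taylor expansion for the \emph{linear} process, to read off (b) immediately from (a), to deduce (c) from (a) combined with Proposition~\ref{prop:control hnu phinu Omeganu} and the resolvent identity~\eqref{eq:volterra equation Theta Omega}, and finally to obtain (d) from (c) by identifying the integral kernel of $\Omega_t$. All vector/matrix identities below are understood componentwise.

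\emph{Part (a).} Since the flow of the additive‑noise linear SDE~\eqref{eq:def of Y alpha} is Markovian, $u^x_t(\nu) = \int_{\R^d} \Psi_t(x,y)\,\nu(\d y)$ with $\Psi_t(x,y) := \E_y F(x,Y^\alpha_t)$. By~\cite[Th.~7.18]{MR3097957}, $y \mapsto \Psi_t(x,y)$ is $C^2$, and using that $b+\alpha$ is globally Lipschitz and $C^2$ with bounded second derivative (Assumptions~\ref{ass:standing assumptions1}) together with $\|\nabla_y F(x,\cdot)\|_\infty+\|\nabla^2_y F(x,\cdot)\|_\infty$ bounded uniformly in $x$, one gets a constant $C_T$ with $\|\nabla_y\Psi_t(x,\cdot)\|_\infty + \|\nabla^2_y\Psi_t(x,\cdot)\|_\infty \le C_T$ for all $x$ and all $t\in[0,T]$ (the first‑order part being also~\eqref{eq:gradient bound}). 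Writing $\nu = \Law(Z_0+H)$ and Taylor‑expanding $y\mapsto\Psi_t(x,y)$ at $Z_0$, we obtain pointwise $|\Psi_t(x,Z_0+H) - \Psi_t(x,Z_0) - \nabla_y\Psi_t(x,Z_0)\cdot H| \le C_T|H|^2$; taking expectations, $|u^x_t(\nu) - \E\Psi_t(x,Z_0) - \E[\nabla_y\Psi_t(x,Z_0)\cdot H]| \le C_T\E|H|^2$. Since $\Law(Z_0)=\nu_\infty$ is invariant for~\eqref{eq:def of Y alpha}, $\E\Psi_t(x,Z_0) = \E F(x,Y^{\alpha,\nu_\infty}_t) = \alpha(x)$, and since $\E[H\mid Z_0] = h(Z_0)$ the tower property gives $\E[\nabla_y\Psi_t(x,Z_0)\cdot H] = \E[\nabla_y\Psi_t(x,Z_0)\cdot h(Z_0)] = \int \nabla_y\E_y F(x,Y^\alpha_t)\cdot h(y)\,\nu_\infty(\d y) = \Theta_t(h)(x)$. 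This is (a). Part (b) then follows at once: $u^x_t(\nu_\infty)=\alpha(x)$, the bound $C_T\E|H|^2$ is $o(\sqrt{\E|H|^2})$, and the first‑order term of the expansion is $\E[\nabla_y\E_{Z_0}F(x,Y^\alpha_t)\cdot H]$ with $y\mapsto\nabla_y\E_y F(x,Y^\alpha_t)$ deterministic, so by definition $u^x_t$ is Lions differentiable at $\nu_\infty$ with $\partial_\nu u^x_t(\nu_\infty)(y) = \nabla_y\E_y F(x,Y^\alpha_t)$.

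\emph{Part (c).} Recall $\varphi^\nu_t(x) = \E F(x,Y^{\alpha,\nu}_t)-\alpha(x) = u^x_t(\nu)-\alpha(x)$ and $k^\nu_t(x) = \E F(x,X^\nu_t)-\alpha(x) = v^x_t(\nu)-\alpha(x)$. Since $W_1(\nu,\nu_\infty) = W_1(\Law(Z_0+H),\Law(Z_0)) \le \E|H| \le (\E|H|^2)^{1/2}$, Proposition~\ref{prop:control hnu phinu Omeganu} gives, uniformly in $x$ and $t\in[0,T]$, $k^\nu_t(x) = \varphi^\nu_t(x) + \int_0^t \Omega_{t-s}(\varphi^\nu_s)(x)\,\d s + O(\E|H|^2)$. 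By (a), $\varphi^\nu_s = \Theta_s(h) + \rho^\nu_s$ with $\sup_{s\in[0,T]}\|\rho^\nu_s\|_\infty \le C_T\E|H|^2$, hence $\sup_{s\in[0,T]}\|\rho^\nu_s\|_{\cH} \le C_T\E|H|^2$. The Neumann series~\eqref{eq:link between Omega and Theta} converges uniformly on $[0,T]$ and maps $\cH$ into $\cK$, so $M_T := \sup_{\tau\in[0,T]}\|\Omega_\tau\|_{\cL(\cH,\cK)} < \infty$ and therefore $\|\int_0^t \Omega_{t-s}(\rho^\nu_s)\,\d s\|_\infty \le T M_T C_T\,\E|H|^2$. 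Substituting $\varphi^\nu_s = \Theta_s(h)+\rho^\nu_s$ and using the resolvent identity $\Theta_t(h) + \int_0^t \Omega_{t-s}(\Theta_s(h))\,\d s = \Omega_t(h)$ from~\eqref{eq:volterra equation Theta Omega}, the terms $\varphi^\nu_t$, $\int_0^t\Omega_{t-s}(\Theta_s(h))\,\d s$ and the remainders collapse to $k^\nu_t(x) = \Omega_t(h)(x) + O(\E|H|^2)$, which is (c).

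\emph{Part (d) and main obstacle.} By (c), $v^x_t(\nu) = \alpha(x) + \Omega_t(h)(x) + O(\E|H|^2)$ and $v^x_t(\nu_\infty) = \alpha(x)$, so it remains to write $\Omega_t(h)(x)$ as $\E[\Gamma_t(x,Z_0)H]$ for a deterministic (matrix‑valued) $\Gamma_t(x,\cdot)\in\cH$. From~\eqref{eq:link between Omega and Theta} and the bound~\eqref{eq:gradient bound}, each $\Theta^{\otimes i}_t$ is an integral operator against a bounded kernel, with bounds summable uniformly on $[0,T]$, so $\Omega_t$ is an integral operator $\Omega_t(h)(x) = \int_{\R^d}\omega_t(x,y)h(y)\,\nu_\infty(\d y)$ with $\omega_t(x,\cdot)$ bounded and continuous; then $\Omega_t(h)(x) = \E[\omega_t(x,Z_0)h(Z_0)] = \E[\omega_t(x,Z_0)H]$ by the tower property, so $v^x_t$ is Lions differentiable at $\nu_\infty$ with $\partial_\nu v^x_t(\nu_\infty)(y) = \omega_t(x,y)$. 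To identify $\omega_t$, apply Fubini in $\Omega_t(h) = \Theta_t(h) + \int_0^t \Omega_{t-s}(\Theta_s(h))\,\d s$: writing $\Theta_s(h)(z) = \int\nabla_y\E_y F(z,Y^\alpha_s)\cdot h(y)\,\nu_\infty(\d y)$, the right‑hand side equals $\int \left[\nabla_y\E_y F(x,Y^\alpha_t) + \int_0^t \Omega_{t-s}\big(\nabla_y\E_y F(\cdot,Y^\alpha_s)\big)(x)\,\d s\right] h(y)\,\nu_\infty(\d y)$, and since this holds for every $h\in\cH$ and both kernels are continuous in $y$, we read off exactly~\eqref{eq:lions derivative McKean}. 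The delicate points are the uniform‑in‑$(x,t\in[0,T])$ bookkeeping in (c) — exploiting the resolvent identity so the remainders genuinely reduce to $O(\E|H|^2)$ — and, in (d), making the kernel representation of $\Omega_t$ and the pointwise‑in‑$y$ identification of $\omega_t$ rigorous, which rests on the uniform bounds on the iterated kernels coming from~\eqref{eq:gradient bound} and on the continuity of $y\mapsto\nabla_y\E_y F(x,Y^\alpha_s)$.
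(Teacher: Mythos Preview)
Your proof is correct and follows essentially the same route as the paper: Taylor expansion of the linear functional $y\mapsto\E_y F(x,Y^\alpha_t)$ for (a) and (b), Proposition~\ref{prop:control hnu phinu Omeganu} combined with (a) and the resolvent identity~\eqref{eq:volterra equation Theta Omega} for (c), and Fubini to identify the kernel of $\Omega_t$ for (d). You actually spell out part (c) more carefully than the paper does --- the paper calls it a ``direct consequence'' of Proposition~\ref{prop:control hnu phinu Omeganu} and $W_1(\nu,\nu_\infty)^2\le\E|H|^2$, leaving the substitution $\varphi^\nu_s=\Theta_s(h)+\rho^\nu_s$ and the use of the second resolvent identity implicit --- and for (d) the paper simply posits the candidate~\eqref{eq:lions derivative McKean} and checks $\E[\text{candidate}(Z_0)\cdot h(Z_0)]=\Omega_t(h)(x)$ by Fubini, which is the same computation you do in reverse.
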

\begin{remark}\label{rk:interpretation Theta_t, Omega_t}
	In particular, from points 1 and 3, we have for all $h \in \cH$:
	\begin{align*}
		\Theta_t(h)(x) &= \lim_{\epsilon \rightarrow 0} \frac{ u^x_t(\Law(Z_0 + \epsilon h(Z_0))) - \alpha(x)}{\epsilon}, \\
		\Omega_t(h)(x) &= \lim_{\epsilon \rightarrow 0} \frac{ v^x_t(\Law(Z_0 + \epsilon h(Z_0))) - \alpha(x)}{\epsilon}.
	\end{align*}
\end{remark}
\begin{proof}
	The first two points follow from the fact that $u^x_t(\nu)$ depends linearly on $\nu$: $u^x_t(\nu) = \E g(Z_0 + H)$, with $g(y) := \E_y F(x, Y^\alpha_t)$. This function $g$ is $C^2$ and there exists a constant $C_T$ such that for all $t \in [0, T]$, for all $x \in \R^d$, $\norm{\nabla^2 g}_\infty \leq C_T$. Therefore:
	\begin{align*} u^x_t(\nu) - \alpha(x) &= \E g(Z_0 + H) - \E g(Z_0) = \E \int_0^1{ \nabla g(Z_0 + \theta H)\cdot H \d \theta } \\
	&= \E \nabla g(Z_0) \cdot H + \E \int_0^1{ \left( \nabla g(Z_0 + \theta H) - \nabla g(Z_0)\right)\cdot H \d \theta } \\
	&= \E \nabla g(Z_0) \cdot H + O_T(\E |H|^2). \end{align*}
	So $u^x_t$ is Lions differentiable at $\nu_\infty$ with $\partial_\nu u^x_t(\nu_\infty)(y)= \nabla g(y)$. In addition:
		\begin{align*} \E \nabla g(Z_0) \cdot H = \E [\nabla g(Z_0) \cdot \E (H ~|~Z_0)] &= \E \nabla g(Z_0) \cdot h(Z_0) = \Theta_t(h)(x).  \end{align*}
	The third point is a direct consequence of Proposition~\ref{prop:control hnu phinu Omeganu} and of the inequality:
	\[ {W_1(\nu, \nu_\infty)}^2 \leq {W_2(\nu, \nu_\infty)}^2 \leq \E |H|^2. \] 
	To check the last point, it suffices to show that
	\[ \E [ \partial_\nu v^x_t(\nu_\infty)(Z_0) \cdot h(Z_0) ]= \Omega_t(h),  \]
where $\partial_\nu v^x_t(\nu_\infty)(y)$ is given by the right-hand side of~\eqref{eq:lions derivative McKean}. This equality follows by the linearity of $h \mapsto \Omega_t(h)$ and by Fubini's theorem.
\end{proof}

\subsection{Static bifurcation analysis: a Green-Kubo formula}\label{sec:bifurcations}
Our result also provides some information on the number of invariant probability measures of~\eqref{eq:McKeanVlasov1}.
In this Section, in addition to Assumption~\ref{ass:standing assumptions1}, we assume that $F$ is bounded
\[ \sup_{x, y \in \R^d} \abs{F(x, y)} < \infty, \]
and that the drift is confining in the sense that:
\[ \exists R, \beta > 0, \quad |x-y| \geq R \implies (b(x) - b(y))\cdot (x-y) \leq -\beta |x-y|^2. \]
We consider the following open ball of $\cK$
\[ \cK_0 := \{ \alpha \in \cK: ~ \norm{\alpha}_{\cK} \leq \norm{F}_\infty +  \norm{\nabla_x F}_\infty \}. \]
Let $\alpha \in \cK_0$. Then Assumption~\ref{ass:standing assumption2} holds with constants $R, \beta$ independent of $\alpha$.
So the SDE~\eqref{eq:linear non-homogeneous SDE} has a unique invariant probability measure, denoted by $\nu^\alpha_\infty$.  In addition, the bound~\eqref{eq:W1 bound eberle} holds, for some  constants $C_*$ and $\kappa_*$ which do not depend on $\alpha$.
We consider $\Psi: \cK_0 \rightarrow \cK_0 $, defined by:
\[ \forall \alpha \in \cK_0, \quad  \Psi(\alpha) := x \mapsto \int_{\R^d} F(x, y) \nu^{\alpha}_\infty(\d y). \]
Note that there is a one-to-one correspondence between the invariant probability measures of~\eqref{eq:McKeanVlasov1} and the fixed-points of $\Psi$ in $\cK_0$.
The following result is a generalization of the Green-Kubo formula~\cite[Ch. 5]{greenkubo}:
\begin{proposition}\label{prop:static bifurcation}
	The function $\Psi$ is Frechet differentiable at every $\alpha \in \cK_0$ and
	\[ D_\alpha  \Psi(\alpha) \cdot \epsilon = \int_0^\infty{ 
\Theta_t(\epsilon) \d t} = \hat{\Theta}(\epsilon) (0), \quad \epsilon, \alpha \in \cK_0, \]
	where $\Theta_t$ is given by~\eqref{eq: definition of Theta t i j}. 
\end{proposition}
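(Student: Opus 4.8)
The plan is to obtain an \emph{exact} formula for $\Psi(\alpha+\epsilon)-\Psi(\alpha)$ by applying the integrated sensitivity formula of Proposition~\ref{prop:integrated sensitivity formula} to a \emph{time-independent} drift perturbation $k\equiv\epsilon$ and to a \emph{stationary} initial condition, and then to let $t\to\infty$. Fix $\alpha\in\cK_0$ and $\epsilon\in\cK$ with $\norm{\epsilon}_\cK$ small enough that $b+\alpha+\epsilon$ still satisfies the uniform Eberle estimates~\eqref{eq:W1 bound eberle}--\eqref{eq:gradient bound} with constants independent of $\epsilon$, and let $\nu^{\alpha+\epsilon}_\infty$ be the unique invariant probability measure of~\eqref{eq:linear non-homogeneous SDE} with drift $b+\alpha+\epsilon$. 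Apply Proposition~\ref{prop:integrated sensitivity formula} with $k_\theta\equiv\epsilon$, $g=F(x,\cdot)$ and $\nu=\nu^{\alpha+\epsilon}_\infty$. Since $\nu^{\alpha+\epsilon}_\infty$ is invariant for $Y^{\alpha+\epsilon}$, we have $\Law(Y^{\alpha+\epsilon,\nu^{\alpha+\epsilon}_\infty}_\theta)=\nu^{\alpha+\epsilon}_\infty$ for every $\theta\ge 0$ and $\E F(x,Y^{\alpha+\epsilon,\nu^{\alpha+\epsilon}_\infty}_t)=\Psi(\alpha+\epsilon)(x)$; after the change of variable $s=t-\theta$ (the integrand depending on $\theta$ only through $t-\theta$, since the law $\nu^{\alpha+\epsilon}_\infty$ is constant in $\theta$) this gives, for every $t\ge 0$,
\[ \Psi(\alpha+\epsilon)(x)-\E F(x,Y^{\alpha,\nu^{\alpha+\epsilon}_\infty}_t)=\int_0^t\!\int_{\R^d}\nabla_y\E_y F(x,Y^\alpha_s)\cdot\epsilon(y)\,\nu^{\alpha+\epsilon}_\infty(\d y)\,\d s. \]

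Next I would let $t\to\infty$. The gradient bound~\eqref{eq:gradient bound} gives $\abs{\nabla_y\E_y F(x,Y^\alpha_s)}\le C_*\norm{\nabla_y F}_\infty e^{-\kappa_* s}$, so the right-hand side converges, uniformly in $x$, to $\int_0^\infty\!\int_{\R^d}\nabla_y\E_y F(x,Y^\alpha_s)\cdot\epsilon(y)\,\nu^{\alpha+\epsilon}_\infty(\d y)\,\d s$; and~\eqref{eq:W1 bound eberle} yields $\abs{\E F(x,Y^{\alpha,\nu^{\alpha+\epsilon}_\infty}_t)-\Psi(\alpha)(x)}\le\norm{\nabla_y F}_\infty C_*e^{-\kappa_* t}W_1(\nu^{\alpha+\epsilon}_\infty,\nu^\alpha_\infty)\to 0$. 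Recalling the definition~\eqref{eq: definition of Theta t i j} of $\Theta_s$, in which the integrating measure is $\nu^\alpha_\infty$, we arrive at the identity
\[ \Psi(\alpha+\epsilon)(x)-\Psi(\alpha)(x)-\int_0^\infty\Theta_s(\epsilon)(x)\,\d s=E_\epsilon(x),\quad E_\epsilon(x):=\int_0^\infty\!\int_{\R^d}\nabla_y\E_y F(x,Y^\alpha_s)\cdot\epsilon(y)\,(\nu^{\alpha+\epsilon}_\infty-\nu^\alpha_\infty)(\d y)\,\d s. \]
Since $0\in D$ (cf.\ the Proposition following the definition of $\hat{\Theta}$) and $\Theta_s$ maps $\cH$ into $\cK$ with $\norm{\Theta_s(\epsilon)}_\cK\le Ce^{-\kappa_* s}\norm{\epsilon}_\cK$, the map $\epsilon\mapsto\int_0^\infty\Theta_s(\epsilon)\,\d s=\hat{\Theta}(0)(\epsilon)$ is a bounded linear operator on $\cK$, and it only remains to prove that $\norm{E_\epsilon}_\cK=o(\norm{\epsilon}_\cK)$ as $\norm{\epsilon}_\cK\to 0$.

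The estimate of $E_\epsilon$ is the main point. First, Proposition~\ref{prop:wassertein control with h} applied with initial law $\nu^\alpha_\infty$ (stationary for $Y^\alpha$) gives $W_1(\Law(Y^{\alpha+\epsilon,\nu^\alpha_\infty}_t),\nu^\alpha_\infty)\le C\int_0^t e^{-\kappa_*(t-\theta)}\norm{\epsilon}_\infty\,\d\theta\le (C/\kappa_*)\norm{\epsilon}_\infty$ for all $t$; letting $t\to\infty$ and using~\eqref{eq:W1 bound eberle} yields the quantitative continuity $W_1(\nu^{\alpha+\epsilon}_\infty,\nu^\alpha_\infty)\le C'\norm{\epsilon}_\infty$. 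I would then split the $s$-integral in $E_\epsilon$ at a threshold $T_0$. On $s\le T_0$, I bound $\abs{\int(\cdots)\,\d(\nu^{\alpha+\epsilon}_\infty-\nu^\alpha_\infty)}$ by the Lipschitz seminorm in $y$ of $y\mapsto\nabla_y\E_y F(x,Y^\alpha_s)\cdot\epsilon(y)$ times $W_1(\nu^{\alpha+\epsilon}_\infty,\nu^\alpha_\infty)$; using the finite-horizon bounds on the first and second $y$-derivatives of $y\mapsto\E_y F(x,Y^\alpha_s)$ (from~\cite[Th.\ 7.18]{MR3097957}), this contribution is $\le C(T_0)\norm{\epsilon}_\cK\,W_1(\nu^{\alpha+\epsilon}_\infty,\nu^\alpha_\infty)\le C(T_0)C'\norm{\epsilon}_\cK^2$. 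On $s>T_0$, I use the crude bound $\abs{\int(\cdots)\,\d(\nu^{\alpha+\epsilon}_\infty-\nu^\alpha_\infty)}\le 2C_*\norm{\nabla_y F}_\infty e^{-\kappa_* s}\norm{\epsilon}_\infty$, whose $s$-integral is $\le(2C_*\norm{\nabla_y F}_\infty/\kappa_*)e^{-\kappa_* T_0}\norm{\epsilon}_\cK$. Hence $\norm{E_\epsilon}_\infty\le C(T_0)C'\norm{\epsilon}_\cK^2+C e^{-\kappa_* T_0}\norm{\epsilon}_\cK$: given $\eta>0$ one first chooses $T_0$ so that $C e^{-\kappa_* T_0}<\eta/2$ and then $\norm{\epsilon}_\cK$ small enough that $C(T_0)C'\norm{\epsilon}_\cK<\eta/2$, which gives $\norm{E_\epsilon}_\infty=o(\norm{\epsilon}_\cK)$. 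The $x$-gradient of $E_\epsilon$ is treated identically, differentiating under the integral and in the $x$-slot of $F$ and using~\eqref{eq:gradient bound} applied to $\nabla_x F(x,\cdot)$, so in fact $\norm{E_\epsilon}_\cK=o(\norm{\epsilon}_\cK)$. This proves that $\Psi$ is Fréchet differentiable at $\alpha$ with $D_\alpha\Psi(\alpha)\cdot\epsilon=\int_0^\infty\Theta_t(\epsilon)\,\d t$, which is the asserted Green--Kubo formula.

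The step I expect to be the main obstacle is precisely the $o(\norm{\epsilon}_\cK)$ control of $E_\epsilon$: the naive bound $\abs{E_\epsilon(x)}\le 2\int_0^\infty C_*\norm{\nabla_y F}_\infty e^{-\kappa_* s}\norm{\epsilon}_\infty\,\d s$ only yields $O(\norm{\epsilon}_\cK)$, and one genuinely needs both the quantitative continuity $W_1(\nu^{\alpha+\epsilon}_\infty,\nu^\alpha_\infty)=O(\norm{\epsilon}_\cK)$ and the two-regime split (or, alternatively, total-variation continuity of $\alpha\mapsto\nu^\alpha_\infty$) to extract the extra smallness; a secondary bookkeeping nuisance is carrying the argument through in the $\cK$-norm rather than the sup-norm, which requires tracking the $x$-derivatives of the kernel $\nabla_y\E_y F(x,Y^\alpha_s)$ with $s$-uniform, exponentially decaying bounds.
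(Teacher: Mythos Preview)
Your proof is correct and follows a genuinely different route from the paper's. The paper applies the integrated sensitivity formula with initial law $\nu^\alpha_\infty$ (stationary for the \emph{unperturbed} process), so the right-hand side involves the time-dependent law $\Law(Y^{\alpha+\epsilon,\nu^\alpha_\infty}_\theta)$; the remainder is then the difference $\int_0^T\bigl[\E G^x_{T-\theta}(Y^{\alpha+\epsilon,\nu^\alpha_\infty}_\theta)-\E G^x_{T-\theta}(Y^{\alpha,\nu^\alpha_\infty}_\theta)\bigr]\d\theta$, which the paper controls by Girsanov's theorem, yielding a bound $C\bigl(e^{-\kappa_*T}+\sqrt{T}\,\norm{\epsilon}_\infty^2\bigr)$ and then choosing $T=1/\norm{\epsilon}_\infty$. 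You instead take $\nu=\nu^{\alpha+\epsilon}_\infty$ (stationary for the \emph{perturbed} process), which freezes the integrating law and lets you pass directly to $t=\infty$; the remainder $E_\epsilon$ then records only the discrepancy between $\nu^{\alpha+\epsilon}_\infty$ and $\nu^\alpha_\infty$ as integrating measures, and you handle it by the quantitative $W_1$-continuity of $\alpha\mapsto\nu^\alpha_\infty$ (derived from Proposition~\ref{prop:wassertein control with h}) plus a short-time/long-time split. Your approach avoids Girsanov entirely, at the price of needing a Lipschitz bound on $y\mapsto\nabla_y\E_yF(x,Y^\alpha_s)$ on the short-time window---i.e.\ a $C^2$-in-$y$ estimate on the semigroup, which the paper also invokes (via~\cite[Th.~7.18]{MR3097957}) in Lemma~\ref{ass:regualirty L derivative}. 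The paper's Girsanov argument sidesteps this second-derivative bound but introduces the $\sqrt{T}$ factor that forces the ad hoc choice of $T$; your argument is arguably cleaner in that the two error pieces decouple and the threshold $T_0$ can be chosen independently of $\epsilon$ before letting $\norm{\epsilon}_\cK\to0$.
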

\begin{proof}
	We have for all $T \geq 0$, 
	\begin{align*}
		\int_{\R^d} F(x, y) (\nu_\infty^{\alpha+\epsilon} - \nu_\infty^\alpha)(\d y) &= \left[ \E F(x, Y^{{\alpha+\epsilon}, 
			\nu_\infty^{\alpha+\epsilon}}_T) - \E 
		F(x, Y^{\alpha+\epsilon, 
			\nu_\infty^\alpha}_T) \right] \\
											     & \quad + \left[ \E F(x, Y^{\alpha+\epsilon, 
			\nu_\infty^\alpha}_T) -  \E F(x, Y^{\alpha, \nu_\infty^\alpha}_T) \right] \\
		&=: A(x) + B(x).
	\end{align*}
	By~\eqref{eq:W1 bound eberle}, there exist positive constants $C, \kappa_*$ such that 
	$\norm{A}_\cK \leq C e^{-\kappa_* T} W_1 (\nu^{\alpha+\epsilon}_\infty, \nu^\alpha_\infty)$: this term can be made arbitrarily small by choosing $T$ sufficiently large.  In addition, using 
	Proposition~\ref{prop:integrated sensitivity formula}, we have
	\[ B(x) = \int_0^T{ \int_{\mathbb{R}^d}{ \left[ \nabla_y \E_y F(x, Y^{\alpha}_{T-\theta}) \cdot \epsilon(y)  \right]
			\Law(Y^{\alpha+\epsilon, \nu^\alpha_\infty}_\theta)(\d y) \d \theta}  }. \]
			It follows that $\norm{B}_\cK \leq C \norm{\epsilon}_\infty$. Letting $T \rightarrow \infty$ proves that $\Psi$ is continuous. 
By refining the previous argument, we show that this function is Frechet differentiable with the stated derivative. Define $G^x_t(y) :=  \nabla_y \E_y F(x, Y^{\alpha}_t) \cdot \epsilon(y)$. We have $B(x) = \int_0^T{  \E G^x_{T- \theta}(Y^{\alpha+\epsilon, \nu_\infty^\alpha}_\theta) \d \theta}$ 
and, by Girsanov's theorem, provided that $\norm{\epsilon}_\infty^2 T < 1$, we have: 
\begin{align*}  \left| \E G^x_{T- \theta}(Y^{\alpha+\epsilon, \nu_\infty^\alpha}_\theta) -  \E G^x_{T- \theta}(Y^{\alpha, 
	\nu_\infty^\alpha}_\theta) \right| & \leq C \norm{G^x_{T-\theta}}_\infty \sqrt{\theta} \norm{\epsilon}_\infty  \\
	& \leq C e^{-\kappa_*(T-\theta)}  \norm{\epsilon}^2_\infty \sqrt{T}.  
\end{align*}
	Therefore, we deduce that
\[ \left| B(x) - \int_0^\infty{\int_{\mathbb{R}^d}{ \nabla_y 
			\E_y 
F(x, Y^{\alpha}_{\theta}) \cdot \epsilon(y) \nu_\infty^\alpha(\d y)} \d \theta} \right| \leq C( \sqrt{T} \norm{\epsilon}^2_\infty  +  e^{-\kappa_* T}).  \] 
	Overall, there exists a constant $C$ such that 
	\begin{align*}
	& 	\left| \int_{\R^d} F(x, y) (\nu^{\alpha+\epsilon}_\infty - \nu^\alpha_\infty)(\d y) - \int_0^\infty{\int_{\mathbb{R}^d}{ \nabla_y 
			\E_y 
			F(x, Y^{\alpha}_{\theta}) \cdot \epsilon(y) \nu_\infty^\alpha(\d y)} \d \theta} \right| \\
	& \quad \quad  \leq C [e^{-\kappa_* T} + 
	\norm{\epsilon}_\infty^2 \sqrt{T}].  
	\end{align*}
	We choose $T = 1/\norm{\epsilon}_\infty$ and let $\norm{\epsilon}_\infty$ goes to zero: the right-hand term is a $o(\norm{\epsilon}_\infty)$.
	The same estimate holds on the derivative of $\Psi$ with respect to $x$:
	\begin{align*}
	& 	\left| \int_{\R^d} \nabla_x F(x, y) (\nu^{\alpha+\epsilon}_\infty - \nu^\alpha_\infty)(\d y) - \int_0^\infty{\int_{\mathbb{R}^d}{ \nabla_y 
			\E_y 
			\nabla_x F(x, Y^{\alpha}_{\theta}) \cdot \epsilon(y) \nu_\infty^\alpha(\d y)} \d \theta} \right| \\
			& \quad \quad  \leq C [e^{-\kappa_* T} + 
		\norm{\epsilon}_\infty^2 \sqrt{T}].  
	\end{align*}
	Altogether, we deduce that $\alpha 
	\mapsto \int_{\R^d} F(\cdot, y) \nu^\alpha_\infty(\d y) $ is Frechet differentiable with the stated derivative.
\end{proof}
The interpretation of this result is the following: static bifurcations, leading to a change of the number of invariant probability measures of~\eqref{eq:McKeanVlasov1}, occur for parameters satisfying $\det(I - \hat{\Theta}(0)) = 0$. On the other hand, we expect Hopf bifurcations to occur at parameters for which $\det(I - \hat{\Theta}(i \omega)) = 0$, for some $\omega > 0$. Altogether, this covers the two canonical ways to break the stability condition $\lambda' > 0$, where $\lambda'$ is given by~\eqref{eq:spectral assumption}. The study of these bifurcations is left to future research. 
\section{McKean-Vlasov of convolution type on the torus}\label{sec:part2}
Let $\beta > 0$. We consider the following McKean-Vlasov equation on the torus $\T^d := {(\mathbb{R} /  2 \pi \mathbb{Z})}^d$:
\begin{equation}
	\label{eq:McKeanVlasov2} 
	\d X^\nu_t = - \int_{\mathbb{T}}{ \nabla W (X^\nu_t-y) \mu_t(\d y)} \d t + \sqrt{2 \beta^{-1}} \d B_t \quad \text{ with } \quad \mu_t = \Law(X^\nu_t).
\end{equation}
with initial condition $\Law(X^\nu_0) = \nu \in \cP(\T^d)$.
Here $(B_t)$ is a Brownian motion on $\T^d$. 
This equation generalizes the Kuramoto model~\cite{RevModPhys.77.137, MR2594897, MR3689966, zbMATH06040434}, for 
which $d = 1$ and $W = -\kappa \cos$ for some constant $\kappa \geq 0$. 
We refer to~\cite{MR4062483} for a detailed presentation of examples that fit in the framework of~\eqref{eq:McKeanVlasov2}, as well as a study of the static bifurcations of this equation.
In this Section, we study the local stability of the uniform probability measure using the strategy and the tools introduced in Section~\ref{sec:part1}. 
\subsection{Main result}
Write the interaction kernel $W: \T^d \rightarrow \R$ in Fourier:
\begin{equation}
	\label{eq:fourier series W}
	 W(x) = \sum_{n \in \mathbb{Z}^d} \tilde{W}(n) e^{i n \cdot x}, \quad x \in \mathbb{T}^d,
\end{equation}
where $n \cdot x = \sum_{i = 1}^d n_i x_i$. Let $|n|^2 = n \cdot n$. The Fourier coefficients of $W$ are given by
\[ \tilde{W}(n) = \frac{1}{{(2 \pi)}^d} \int_{\T^d} W(y) e^{-i n \cdot y} \d y, \quad n \in \mathbb{Z}^d. \]
\begin{assumption}\label{hyp:WC3}
Assume that $W \in C^3(\T^d)$ and that $\sum_{n \in \mathbb{Z}^d} { |n|^2 |\tilde{W}(n)| } < \infty$.\end{assumption}
The uniform probability measure
\[ U(\d x) := \frac{\d x}{{(2\pi)}^d} \]
is an invariant probability measure of~\eqref{eq:McKeanVlasov2} and
\begin{theorem}\label{th:second main result}
	In addition to Assumption~\ref{hyp:WC3}, assume that
	\begin{equation} 
		\label{eq:spectral assumption torus}
		\lambda' := \inf_{n \in \mathbb{Z}^d \setminus \{0\} } |n|^2 \left( \beta^{-1} +  \Re(\tilde{W}(n)) \right) > 0. 
	\end{equation}
	Then $U(\d x) = \frac{\d x}{{(2 \pi)}^d}$ is locally stable: there exists $\lambda \in (0, \lambda')$, $\epsilon > 0$ and $C > 1$ such that for all $\nu \in \cP(\T^d)$ with $W_1(\nu, U) < \epsilon$, it holds that
	 \[ \forall t \geq 0, \quad  W_1(\Law(X^\nu_t), U) \leq C W_1(\nu, U) e^{-\lambda t}. \]
\end{theorem}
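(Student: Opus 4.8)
The plan is to carry the whole apparatus of Section~\ref{sec:part1} over to the torus, the key point being that at the uniform measure the interaction vanishes, $-\int_{\T^d}\nabla W(x-y)\,U(\d y)=0$ since $W*U$ is constant, so the ``linear process'' associated with~\eqref{eq:McKeanVlasov2} and $U$ is simply $\sqrt{2\beta^{-1}}$ times a Brownian motion on $\T^d$. Concretely, in the notation of Section~\ref{sec:part1} one takes $\nu_\infty=U$, $b\equiv0$, $F(x,y)=-\nabla W(x-y)$ (hence $\alpha\equiv0$), and the reference SDE $\d Y^0_t=\sqrt{2\beta^{-1}}\,\d B_t$. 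Assumption~\ref{ass:standing assumptions1} is automatic because $W\in C^3(\T^d)$ and $\T^d$ is compact; the role of Eberle's bound~\eqref{eq:W1 bound eberle}, hence of Assumption~\ref{ass:standing assumption2}, is taken over by the spectral gap of the heat semigroup on $\T^d$: an elementary estimate with the heat kernel $p_t$ (using $\|\nabla p_t\|_{L^1(\T^d)}\le Ce^{-\beta^{-1}t}$ for $t\ge1$, and $W_1(\mu,\mu(\cdot-v))\le|v|$ for $t\le1$) gives
\[ W_1\bigl(\Law(Y^{0,\delta_x}_t),\Law(Y^{0,\delta_y}_t)\bigr)\le C_*e^{-\beta^{-1}t}\,|x-y|, \]
i.e.\ $\kappa_*=\beta^{-1}$, and by the argument of~\eqref{eq:gradient bound} the gradient bound $|\nabla_y\E_y f(Y^0_t)|\le C_*\|\nabla f\|_\infty e^{-\beta^{-1}t}$. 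With these two inputs, Propositions~\ref{prop:integrated sensitivity formula}, \ref{prop:control hnu phinu Omeganu}, \ref{prop:wassertein control with h}, Lemmas~\ref{lem:technical estimate h nu}, \ref{lem:Lipschitz McKean} and the final Lemma~\ref{lem:final lemma} carry over mutatis mutandis; all the required sup-norm bounds come from $\|\nabla^2W\|_\infty+\|\nabla^3W\|_\infty<\infty$, the convolution structure making $\|\Theta_t(h)\|_\infty\le\|\nabla^2W\|_\infty\|h\|_{\cH}$ and $\|\nabla_x\Theta_t(h)\|_\infty\le\|\nabla^3W\|_\infty\|h\|_{\cH}$ immediate.

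It then remains to turn the spectral hypothesis~\eqref{eq:spectral assumption torus} into decay of $t\mapsto\Omega_t$. Since everything is of convolution type, $\Theta_t$ and its resolvent $\Omega_t$ are convolution operators, diagonalized by the Fourier basis $\{x\mapsto e^{in\cdot x}e_j:n\in\Z^d,\ 1\le j\le d\}$ of $\cH=L^2(U;\C^d)$. From $\E_yF(x,Y^0_t)=-(\nabla W*p_t)(x-y)$ one gets $\nabla_y\E_yF(x,Y^0_t)=(\nabla^2W*p_t)(x-y)$, so $\Theta_t$ acts on the $n$-th mode by the matrix $-\tilde W(n)\,n n^{\top}e^{-\beta^{-1}|n|^2t}$, and for $\Re z>-\beta^{-1}$, $\hat\Theta(z)$ acts on mode $n$ by $-\tilde W(n)\,n n^{\top}/(z+\beta^{-1}|n|^2)$. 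Using $\det(I_d+uv^{\top})=1+v^{\top}u$,
\[ \det\bigl(I-\hat\Theta(z)\bigr)=\prod_{n\in\Z^d\setminus\{0\}}\frac{z+|n|^2\bigl(\beta^{-1}+\tilde W(n)\bigr)}{z+\beta^{-1}|n|^2}, \]
the product converging because $\sum_n|n|^2|\tilde W(n)|<\infty$ (which also makes $\hat\Theta(z)$ trace-class, so the Fredholm determinant is well defined and equals this product of block determinants). Thus the zeros of $\det(I-\hat\Theta(z))$ in $D=\{\Re z>-\beta^{-1}\}$ are exactly the points $-|n|^2(\beta^{-1}+\tilde W(n))$ lying in $D$, and by~\eqref{eq:spectral assumption torus} each has real part $\le-\lambda'<0$, so the spectral condition of Theorem~\ref{th:main result} holds. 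More directly: one checks that $\widetilde{\Omega_t}(n):=-\tilde W(n)\,n n^{\top}e^{-|n|^2(\beta^{-1}+\tilde W(n))t}$ solves the Volterra equation~\eqref{eq:volterra equation Theta Omega} mode by mode, hence defines $\Omega_t$, and $\Re\bigl(|n|^2(\beta^{-1}+\tilde W(n))\bigr)=|n|^2(\beta^{-1}+\Re\tilde W(n))\ge\lambda'$ yields
\[ \|\Omega_t\|_{\cL(\cH)}\le\Bigl(\sup_{n\ne0}|n|^2|\tilde W(n)|\Bigr)e^{-\lambda't},\qquad \|\Omega_t(h)\|_\infty\le Ce^{-\lambda't}\|h\|_{\cH}. \]

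Finally, feeding these bounds into the proof of Lemma~\ref{lem:final lemma} (with $\kappa_*=\beta^{-1}$, $\varphi^\nu_\theta=-\nabla W*\Law(Y^{0,\nu}_\theta)$ and $\|\varphi^\nu_\theta\|_\infty\le Ce^{-\beta^{-1}\theta}W_1(\nu,U)$) gives, for every $\lambda<\min(\lambda',\beta^{-1})$, a constant $C_\lambda$ and, for each $T$, a $C_T$ with $W_1(\Law(X^\nu_t),U)\le C_\lambda e^{-\lambda t}W_1(\nu,U)+C_TW_1(\nu,U)^2$ on $[0,T]$; the dyadic-time bootstrap of~\cite[Prop.~5.2]{zbMATH06380861}, exactly as in the proof of Theorem~\ref{th:main result}, then produces the claimed estimate with some $\lambda\in(0,\lambda')$ (note $\min(\lambda',\beta^{-1})\le\lambda'$, so the rate obtained is automatically $<\lambda'$, which is all the statement requires; the genuinely optimal rate $\lambda'$ can be recovered by instead running the bootstrap directly on the density via the linearized Fokker--Planck operator, whose Fourier symbol on mode $n$ is $-|n|^2(\beta^{-1}+\tilde W(n))$, but this refinement is not needed).

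The main obstacle is the bookkeeping in the middle step: one must justify that the Fourier diagonalization is legitimate — that $\hat\Theta(z)$ is genuinely trace-class (resp.\ Hilbert--Schmidt) on $\cH$, that the infinite product for the Fredholm determinant converges, and that the Fourier-defined $\Omega_t$ really is the resolvent of $\Theta$ — and that $W\in C^3$ together with $\sum_n|n|^2|\tilde W(n)|<\infty$ is precisely the regularity needed to close every sup-norm estimate inherited from Section~\ref{sec:part1}; since there is no confining drift on $\T^d$, each decay estimate must ultimately come either from the heat-kernel gap $\beta^{-1}$ or from the explicit Fourier computation of $\Omega_t$.
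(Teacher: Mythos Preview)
Your proposal is correct and follows essentially the same route as the paper: reduce to the framework of Section~\ref{sec:part1} with $\alpha\equiv0$ and the reference process equal to Brownian motion on $\T^d$, use the heat-kernel gap in place of Eberle's contraction, diagonalize $\Theta_t$ and $\Omega_t$ in Fourier to obtain $\|\Omega_t(h)\|_\infty\le C e^{-\lambda' t}\|h\|_\infty$ from~\eqref{eq:spectral assumption torus}, and then run Lemma~\ref{lem:final lemma} and the dyadic bootstrap. The paper's proof is organized as ten explicit steps rather than by citation back to Section~\ref{sec:part1}, and it omits your Fredholm-determinant product formula in favor of the direct Fourier computation of $\Omega_t$ (which you also give), but the substance is the same.
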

\begin{remark}
	The constant $\lambda'$ is the exact analogue of \eqref{eq:spectral assumption} in Section~\ref{sec:part1}.
	When the interaction kernel $W$ is even,~\cite{MR4062483} studies the existence of bifurcations of the invariant probability measures of~\eqref{eq:McKeanVlasov2}, provided that there exists $n \in \mathbb{Z}^d \setminus \{0\}$ such that $\beta^{-1} + \Re(\tilde{W}(n)) = 0$. Therefore, criterion~\eqref{eq:spectral assumption torus} is sharp: we prove that the uniform measure is stable up to the first bifurcation. Note that we do not require here $W$ to be even. 
\end{remark}
\subsection{Proof}
To simplify the notations, we first assume that $d = 1$. 
We discuss the case $d > 1$ afterward,  most of the arguments being  the same. We write $\sigma := \sqrt{2 \beta^{-1}}$. The proof is divided into the following steps.\begin{enumerate}[label=\textbf{Step \arabic*.}, wide, labelwidth=1pt, labelindent=0pt] 
	\item Because $\nabla W$ is Lipschitz, the equation~\eqref{eq:McKeanVlasov2} has a unique path-wise solution satisfying the following apriori estimate:
		\[  \forall T > 0, \exists C_T: \forall \nu, \mu \in \cP(\T), \quad \sup_{t \in [0, T]}W_1(\Law(X^\nu_t), \Law(X^\mu_t)) \leq C_T W_1(\nu, \mu). \]
\item We define for $\nu \in \cP(\T)$, $x \in \T$ and $t \geq 0$: 
	\[ k^\nu_t(x) := -\E \nabla W(x- X^\nu_t). \]
	Recall that $U(\d x) = \frac{\d x}{2 \pi}$. Because $k^U_t \equiv 0$, we have, by Step 1
	\[
		\norm{k^\nu_t}_\infty = \sup_{x \in \T}|k^\nu_t(x) - k^{U}_t(x)| \leq C_T \norm{\nabla^2 W}_\infty W_1(\nu, U). 
	\]
	In addition, $x \mapsto k^\nu_t(x)$ is differentiable and
	\[
		\norm{\nabla k^\nu_t}_\infty = \sup_{x \in \T} |\nabla k^\nu_t(x) - \nabla k^U_t(x)| \leq C_T \norm{\nabla^3 W}_\infty W_1(\nu, U). 
	\]
\item We now use that there exists $C_* > 1$ and $\kappa_* > 0$ such that 
	\[ \forall x,y \in \T, \forall t \geq 0,  \quad W_1(\Law(x+\sigma B_t), \Law(y + \sigma B_t)) \leq C_* e^{- \kappa_* t} |x-y|.\]
We refer to~\cite[Prop. 4]{MonmarcheJournel}. We define for all $t \geq 0$, $x \in \T$ and $\nu \in \cP(\T)$:
\[ \varphi^\nu_t(x) := -\E \nabla W(x - X^\nu_0 -\sigma B_t), \] where $X^\nu_0$ is independent of ${(B_t)}_{t \geq 0}$ and has law $\nu$. Because $\norm{\nabla^2 W}_\infty < \infty$, by the preceding result and the dual formulation of the $W_1$ norm, there exists a constant $C > 0$ such that:
\[ \norm{\varphi^\nu_t}_\infty \leq C e^{-\kappa_* t } W_1(\nu, U). \]
\item Let $\cK := W^{1,\infty}(\T)$ be the space of bounded and Lipschitz continuous functions from $\T$ to $\R$. For $k \in C(\R_+; \cK)$ and $\nu \in \cP(\T)$, we consider $(Y^{k, \nu}_t)$ the solution of the following linear non-homogeneous SDE
	\[ \d Y^{k, \nu}_{t} = k_t(Y^{k, \nu}_t) \d t + \sigma \d B_t, \]
	starting with $\Law(Y^{k, \nu}_0) = \nu$.
	Let $g \in C^2(\T)$. The integrated sensibility formula of Proposition~\ref{prop:integrated sensitivity formula} is, in this context:
	\[ \E g(Y^{k, \nu}_t) - \E g(Y^{0, \nu}_t) = \int_0^t \int_\T \nabla_y \E_y g(y + \sigma B_{t - \theta}) \cdot k_\theta(y) \Law(Y^{k, \nu}_\theta)(\d y) \d \theta. \]
\item We let for $h \in L^2(\T)$ and $x \in \T$:
	\[  \Theta_t(h)(x) :=  - \int_\T \nabla_y \E_y \nabla W (x - y - \sigma B_t) \cdot h(y) \frac{\d y}{2 \pi}.  \]
	Using that $\E e^{in \sigma B_t} = e^{-\frac{n^2 \sigma^2}{2}t} = e^{-\frac{n^2 t}{\beta}}$, we find that the Fourier series of $\Theta_t(h)(x)$ is:
	\[  \Theta_t(h)(x) = - \sum_{n \in \mathbb{Z}} n^2 \tilde{W}(n) \tilde{h}(n) e^{-\frac{n^2 t}{\beta}} e^{inx}.   \]
	So $\Theta_t$ is diagonal in the Fourier basis ${(e^{inx})}_{n \in \mathbb{Z}}$ and $\widetilde{\Theta_t(h)}(n) = -n^2 \tilde{W}(n) e^{-\frac{n^2 t}{\beta}} \tilde{h}(n)$.
	In addition, using $\abs{\tilde{h}(n)} \leq \norm{h}_\infty$ we have:
	\[ \norm{\Theta_t(h)}_\infty \leq C_0 e^{- t / \beta} \norm{h}_\infty,     \]
	where $C_0 := \sum_{n \in \mathbb{Z}} n^2 \left|\tilde{W}(n) \right| < \infty$.
\item We then define $\Omega_t(h)$ to be the unique solution of the Volterra integral equation: 
	\[ \forall t \geq 0, \quad \Omega_t(h)= \Theta_t(h) + \int_0^t{ \Theta_{t-s}(\Omega_s(h)) \d s }. \]
	Again, $\Omega_t$ is diagonal in the Fourier basis:
	\[ \Omega_t(h)(x)  = -\sum_{n \in \mathbb{Z}} n^2 \tilde{W}(n) \exp \left(-n^2t \left[\beta^{-1}  + \tilde{W}(n) \right] \right) \tilde{h}(n) e^{inx}. \]
	Let $\lambda'$ be given by~\eqref{eq:spectral assumption torus}. We have:
	\[  \norm{\Omega_t(h)}_{\infty} \leq C_0 e^{-\lambda' t} \norm{h}_{\infty}. \]
	So, under the condition $\lambda' > 0$, $(\Omega_t)$ decays at an exponential rate towards zero.
\item Let $x \in \T$ be fixed. We now apply Step 4 with $g(y) := -\nabla W(x-y)$,  and with $k_t(y) := k^\nu_t(y)$, where $k^\nu_t$ is defined in Step 2. Note that with this choice, $Y^{k, \nu}_t = X^\nu_t$ and so $\E g(Y^{k, \nu}_t) = k^\nu_t(x)$. Similarly, $\E g(Y^{0, \nu}_t) = \varphi^\nu_t(x)$, where $\varphi^\nu_t(x)$ is defined in Step 3. 
	Therefore, we have:
	\begin{align*} k^\nu_t(x) - \varphi^\nu_t(x) &= \int_0^t \int_{\T} \E \nabla^2 W(x-y-\sigma B_{t-\theta}) \cdot k^\nu_\theta(y) \Law(X^\nu_\theta)(\d y) \d \theta \\
		&= \int_0^t{\Theta_{t-\theta}(k^\nu_\theta)(x) \d \theta} + R_t(x),
	\end{align*}
	where
	\begin{align*} R_t(x) &:= \int_0^t \E \left[ G^x_{t,\theta}(X^\nu_\theta) - G^x_{t, \theta}(X^U_\theta) \right] \d \theta, \\
	G^x_{t, \theta}(y) &:= \E \nabla^2 W (x-y -\sigma B_{t-\theta}) \cdot k^\nu_\theta(y) .
	\end{align*}
	Using the apriori estimates of Step 2, we deduce that there exists a constant $C_T$ such that for all $0 \leq \theta \leq t \leq T$:
	\[ |\nabla_y G^x_{t, \theta}(y)| \leq C_T W_1(\nu, U). \]
	Using Step 1, we conclude that $\abs{R_t(x)} \leq C_T {(W_1(\nu, U))}^2$.
	To summarize, we have proven that for all $T > 0$, there exists a constant $C_T$ such that for all $\nu \in \cP(\T)$ and for all $t \in [0, T]$:
	\[ \left| k^\nu_t(x) -  \varphi^\nu_t(x) - \int_0^t{ \Theta_{t-\theta}(k^\nu_s)(x) \d \theta}  \right| \leq  C_T {(W_1(\nu, U))}^2. \]
\item By iterating the last inequality of Step 7, we obtain that for all $T > 0$, there exists a constant $C_T$ such that
	\[ \left| k^\nu_t(x) -  \varphi^\nu_t(x) - \int_0^t{ \Omega_{t-\theta}(\varphi^\nu_s)(x) \d \theta}  \right| \leq  C_T {(W_1(\nu, U))}^2. \]
\item We prove that there exists a constant $C > 0$ such that for all $t > 0$, for all $k \in C([0, t]; \cK)$ and for all $\nu \in \cP(\T)$, it holds that
	\[ W_1(\Law(Y^{k,\nu}_t), \Law(Y^{0, \nu}_t)) \leq C \int_0^t e^{-\kappa_*(t-\theta)} \norm{k_\theta}_\infty \d \theta. \]
	The proof is obtained exactly as in Corollary~\ref{cor:wassertein control with h}; it uses the estimates of Step 3.
\item We fix $\lambda \in (0, \min(\kappa_*, \lambda'))$.
	Using Step 8, Step 6, and Step 3, we deduce that there exists $C_\lambda > 0$ such that for all $T > 0$, there is a constant $C_{T}$ such that for all $t \in [0, T]$ and $\nu \in \cP(\T)$:
	\[ \norm{k^\nu_t}_\infty \leq C_{T} {\left( W_1(\nu, U)\right)}^2 + C_\lambda W_1(\nu, U) e^{-\lambda t}. \]
	Let $k_t(x) := k^\nu_t(x)$. Using that $X^\nu_t = Y^{k, \nu}_t$, we have:
	\[ W_1(\Law(X^\nu_t), U) \leq W_1(\Law(Y^{k, \nu}_t), \Law(Y^{0, \nu}_t)) + W_1(\Law(Y^{0, \nu}_t), U). \]
	By Step 3, we have
	\[ W_1(\Law(Y^{0, \nu}_t), U) \leq C_* e^{-\kappa_* t} W_1(\nu, U). \]
	By Step 9, we have
	\[  W_1(\Law(Y^{k, \nu}_t), \Law(Y^{0, \nu}_t)) \leq C \int_0^t{ e^{-\kappa_*(t-\theta)} \norm{k^\nu_\theta}_\infty \d \theta}.  \]
	Altogether, we deduce that there is a constant $C_\lambda$ such that for all $T > 0$, there exists $C_T > 0$ such that for all $t \in [0, T]$, for all $\nu \in \cP(\T)$, we have:
	\[ W_1(\Law(X^\nu_t), U) \leq C_\lambda W_1(\nu, U) e^{-\lambda t} + C_T {\left( W_1(\nu, U) \right)}^2. \]
	The proof of Theorem~\ref{th:second main result} is deduced from this estimate, exactly as we did at the end of Section~\ref{sec:proof of main result 1}. This ends the proof for $d = 1$.
	\end{enumerate}
	The case $d > 1$ is similar; the only differences are in the expressions of  $\Theta_t$ and $\Omega_t$ of Steps~5 and 6. Given $n \in \mathbb{Z}^d$, we denote by $P_{(n)}$ the $d \times d$ matrix defined by $P_{(n)} = {(n_i n_j)}_{1 \leq i,j \leq d}$.
	We find that for all $h \in L^2(\T^d; \R^\d)$ and for all $x \in \T^d$,  
	\[ \Theta_t(h)(x) = - \sum_{n \in \mathbb{Z}^d} e^{i n \cdot x }\tilde{W}(n) e^{-\frac{|n|^2 t}{\beta}} P_{(n)} \tilde{h}(n),\] 
	and
	\[  \Omega_t(h)(x) = -\sum_{n \in \mathbb{Z}^d} e^{i n \cdot x } \tilde{W}(n) e^{-\frac{|n|^2 t}{\beta} }  P_{(n)} e^{- t \tilde{W}(n) P_{(n)}} \tilde{h}(n). \]
	The eigenvalues of $P_{(n)}$ are $|n|^2$ (of order 1) and zero (of order $d-1$). In addition, it holds that for $\theta \in \R$,
	\[ {(e^{\theta P_{(n)}})}_{i,j} = \delta_{\{i = j\}} + \frac{n_i n_j}{|n|^2}  (e^{\theta |n|^2} -1 ). \]
	Therefore, the estimates of Steps 5 and 6 still hold in dimension $d > 1$. This ends the proof.
\bibliographystyle{abbrvnat}
\appendix
\section{Appendix}
\subsection*{Proof of Lemma~\ref{lem:sigma is equal to zero}}
	Recall that $\cV(x,y) := b(x) + F(x,y)$.
	When $\sigma \equiv 0$, then $\nu_\infty = \delta_{x_*}$ for some $x_* \in \R^d$. Therefore~\eqref{eq: definition of Theta t i j} writes:
	\[ \Theta_t(h)(x) = \nabla_y \cV(x, x_*) e^{t \nabla_x \cV (x_*, x_*)} \cdot h(x_*) =: A^x_t \cdot h(x_*).  \]
	We look at solutions of~\eqref{eq:volterra equation Theta Omega} of the form:
	$\Omega_t(h)(x) := B^x_t \cdot h(x_*)$, for some matrices $B^x_t$. We find that $B^{x}_t$ satisfies
	\begin{equation}
		\label{eq:volterra integral equation matrices A and B}
		\forall t \geq 0, \quad B^x_t = A^x_t + \int_0^t{ A^x_{t-s} \cdot B^{x_*}_{s} \d s}. 
	\end{equation}
	We first study this equation for $x = x_*$. 
	For all $z \in \mathbb{C}$ with $\Re(z)
 > -\kappa$,
 \[ \widehat{A^{x_*}}(z) = \nabla_y \cV(x_*, x_*)  \int_0^\infty{
 e^{-t(z I_d - \nabla_x \cV (x_*, x_*))} \d t} = \nabla_y \cV(x_*, x_*)  {(z I_d - \nabla_x \cV(x_*, x_*))}^{-1}.  \]
 \begin{lemma}\label{lem:A1.jacobian}
	 Let $z \in \C$ such that $\Re(z) > -\kappa$. Then $\det \left(I_d - \widehat{A^{x_*}}(z) \right) = 0$ if and only if $z$ is an eigenvalue of $\nabla_x \cV(x_*, x_*) + \nabla_y \cV(x_*, x_*)$.
 \end{lemma}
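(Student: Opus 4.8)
The statement is essentially a determinant identity in finite dimension, so the plan is purely linear-algebraic. Write $A := \nabla_x \cV(x_*, x_*)$ and $B := \nabla_y \cV(x_*, x_*)$, so that $\widehat{A^{x_*}}(z) = B\,(zI_d - A)^{-1}$ for $\Re(z) > -\kappa$. The first thing I would check is that this expression is well-defined, i.e.\ that $zI_d - A$ is invertible on the half-plane $\Re(z) > -\kappa$: the hypothesis $\sup_{t \geq 0} e^{\kappa t}\norm{e^{tA}} < \infty$ forces the spectral abscissa of $A$ to be at most $-\kappa$, hence every eigenvalue of $A$ has real part $\leq -\kappa < \Re(z)$, so $z$ is not an eigenvalue of $A$ and $zI_d - A$ is invertible. (This is also exactly the regime where the Laplace integral defining $\widehat{A^{x_*}}$ converges, as already used just above in the excerpt.)

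\textbf{Key step.} Factor
\[ I_d - \widehat{A^{x_*}}(z) = I_d - B(zI_d - A)^{-1} = \big[(zI_d - A) - B\big](zI_d - A)^{-1} = \big(zI_d - (A+B)\big)(zI_d - A)^{-1}. \]
Taking determinants and using multiplicativity together with $\det\big((zI_d - A)^{-1}\big) = \det(zI_d - A)^{-1} \neq 0$, I get
\[ \det\big(I_d - \widehat{A^{x_*}}(z)\big) = \frac{\det\big(zI_d - (A+B)\big)}{\det(zI_d - A)}. \]
Since the denominator is nonzero for $\Re(z) > -\kappa$, the left-hand side vanishes if and only if $\det(zI_d - (A+B)) = 0$, that is, if and only if $z$ is an eigenvalue of $A + B = \nabla_x \cV(x_*, x_*) + \nabla_y \cV(x_*, x_*)$. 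This is precisely the claim.

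\textbf{Main obstacle.} There is essentially no analytic difficulty here; the only point that requires a word of justification is the invertibility of $zI_d - A$ on $\{\Re(z) > -\kappa\}$, which is immediate from the growth bound on $e^{tA}$. The rest is the elementary factorization above. The real content of Lemma~\ref{lem:sigma is equal to zero} lies in reducing the criterion $\lambda' > 0$ to the condition $\det(I_d - \widehat{A^{x_*}}(z)) \neq 0$ for all $\Re(z) \geq 0$ (via the Volterra equation~\eqref{eq:volterra integral equation matrices A and B} and a Paley–Wiener argument as in Proposition~\ref{prop:paley-wiener}); once that is in place, Lemma~\ref{lem:A1.jacobian} simply translates ``no zeros in $\Re(z) \geq 0$'' into ``all eigenvalues of the Jacobian $\nabla_x\cV(x_*,x_*) + \nabla_y\cV(x_*,x_*)$ lie in $\Re(z) < 0$'', recovering the classical Jacobian stability criterion for the vector field $x \mapsto \cV(x,x)$.
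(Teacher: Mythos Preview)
Your proof is correct. The paper's argument is slightly different in presentation: instead of factoring the matrix and taking determinants, it works directly with kernel vectors. If $\det(I_d - \widehat{A^{x_*}}(z)) = 0$, the paper picks $u \neq 0$ with $u = B(zI_d - A)^{-1}u$, sets $v := (zI_d - A)^{-1}u$, and reads off $(A+B)v = zv$; the converse is handled symmetrically. Your determinant factorization
\[
I_d - B(zI_d - A)^{-1} = \big(zI_d - (A+B)\big)(zI_d - A)^{-1}
\]
is a cleaner one-line route that treats both directions at once and makes the role of the characteristic polynomial of $A+B$ explicit. The paper's eigenvector argument is essentially the same identity unpacked at the level of a single vector. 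Both approaches are equally elementary; yours is arguably more transparent.
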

 \begin{proof}
When $\det \left(I_d - \widehat{A^{x_*}}(z) \right) = 0$, there exists $u \in \mathbb{R}^d \setminus  \{0\}$ such
        that 
\[ u  = \nabla_y \cV(x_*, x_*)  {(z I_d - \nabla_x \cV (x_*, x_*))}^{-1} u. \]
Setting $v = {(z I_d - \nabla_x \cV(x_*, x_*))}^{-1} u$, we have $v \neq 0$ and
        \[ z v = (\nabla_x \cV(x_*, x_*) + \nabla_y \cV(x_*, x_*)) v.  \]
        So $z$ is an eigenvalue of $\nabla_x \cV(x_*, x_*) + \nabla_y \cV(x_*, x_*)$. The converse statement is proved similarly.
\end{proof}
	Now, assume that all the real parts of the eigenvalues of $\nabla_x \cV(x_*, x_*) + \nabla_y \cV(x_*, x_*)$ are less than $- \lambda '$, for some $\lambda' \in (0, \kappa)$. Then, for all $\Re(z) \geq  -\lambda'$, $\det(I - A^{x_*}_t) \neq 0$. Applying~\cite[Th. 4.1]{MR1050319}, we deduce that $\int_0^\infty{e^{\lambda' t} \norm{B^{x_*}_t} \d t} < \infty$. Using~\eqref{eq:volterra integral equation matrices A and B}, one deduces that $\norm{B^{x_*}_t} e^{\lambda' t} < \infty$, and so
	\[  \norm{\Omega_t(h)}_{\cH} = |\Omega_t(h)(x_*)| = |B^{x_*}_t \cdot h(x_*)| \leq Ce^{-\lambda' t} |h(x_*)| = Ce^{-\lambda't} \norm{h}_{\cH}.  \]
	Therefore, $\lambda' > 0$. Conversely, if $\lambda' > 0$ where $\lambda'$ is given by~\eqref{eq:spectral assumption} holds, then for $\Re(z) > -\lambda'$, $\det(I_d - \widehat{A^{x_*}}(z)) \neq 0$.  So by Lemma~\ref{lem:A1.jacobian}, $z$ is not an eigenvalue of $\nabla_x \cV(x_*, x_*) + \nabla_y \cV(x_*, x_*)$. We deduce that all the eigenvalues of this matrix have real parts less or equal to $-\lambda'$.
	\subsection*{Proof of Proposition~\ref{prop:sigma-small-stability}}
	Denote by $\bar{\Theta}_t := \Theta^\sigma_t - \Theta^0_t$ and let $L_t(h) := \bar{\Theta}_t(h) + \int_0^t \Omega^0_{t-s}( \bar{\Theta}_s(h)) \d s$. We also let $R$ be the resolvent of $L$, such that $R$ solves
	\[ R_t = L_t + \int_0^t L_{t-s} \cdot R_s \d s. \]
\begin{lemma}\label{lem:dvlp-Omega-sigma}
	It holds that for all $t \geq 0$, $\Omega^\sigma_t = \Omega^0_t + R_t + \int_0^t R_{t-s} \cdot \Omega^0_s \d s$.
\end{lemma}
\begin{proof}
Let $Q_t := \Omega^0_t + R_t + \int_0^t R_{t-s} \cdot \Omega^0_s \d s$. To simplify the notation, we write for $A, B \in \cL(\cH)$:
\[ {(A*B)}_t= \int_0^t A_{t-s} \cdot B_s \d s.    \]
We have $L*Q = L*\Omega^0 + L * R + (L*R*\Omega^0) = R * \Omega^0 + L*R$.
Therefore, using that $L*R = R-L$ and the definition of $Q$, we find that $Q$ solves
\begin{equation} \label{eq:volterra-eq-Q} Q = \Omega^0 + L + L * Q. \end{equation}
As $L = \bar{\Theta} + \Omega^0 * \bar{\Theta}$, we have 
\[ Q - (\bar{\Theta} + \Omega^0 * \bar{\Theta})*Q =  \Omega^0 +   \bar{\Theta} + \Omega^0 * \bar{\Theta}. \]
We multiply on the left by $\Theta^0$. Using that $\Theta^0 * \Omega^0 = \Omega^0 - \Theta^0$, we find that
\[ \Theta^0 * Q - \Omega^0 *  \bar{\Theta} * Q = \Omega^0 - \Theta^0 + \Omega^0 * \bar{\Theta}.   \]
Finally, using that $\bar{\Theta} = \Theta^\sigma - \Theta^0$, we find that $\Theta^0 - \Omega^0 * \bar{\Theta} * Q = \Omega^0 * \Theta^\sigma$. Altogether:
\[ L*Q = \Theta^\sigma * Q - \Omega^0 * \Theta^\sigma. \]
We substitute this equality in~\eqref{eq:volterra-eq-Q} to finally obtain that
\[ Q = \Theta^\sigma + \Theta^\sigma * Q. \]
As $\Omega^\sigma$ is the unique solution of this Volterra integral equation, we deduce that $\Omega^\sigma = Q$ as claimed.
\end{proof}
By Assumptions, there exists $\sigma_0, \kappa > 0$ small enough such that for all $\sigma \in M_d(\R)$ with $\det(\sigma) > 0$ and $\norm{\sigma}_2 \leq \sigma_0$, we have
\[ \sup_{t \geq 0}e^{\kappa t} \norm{L_t}_{\cL(\cH)} \leq \frac{\kappa}{4}. \]
Therefore, we deduce that $\sup_{t \geq 0} e^{\frac{\kappa}{2} t} \norm{R_t}_{\cL(\cH)} \leq \frac{\kappa}{2}$ and so the stated result follows using Lemma~\ref{lem:dvlp-Omega-sigma}.
\bibliography{biblio}
\end{document}